\begin{document}

\newtheorem{tm}{Theorem}[section]
\newtheorem{rk}{Remark}[section]
\newtheorem{prop}[tm]{Proposition}
\newtheorem{defin}[tm]{Definition}
\newtheorem{coro}[tm]{Corollary}

\newtheorem{lem}[tm]{Lemma}
\newtheorem{assumption}[tm]{Assumption}

\newtheorem{nota}[tm]{Notation}
\numberwithin{equation}{section}

\newcommand{\stk}[2]{\stackrel{#1}{#2}}
\newcommand{\dwn}[1]{{\scriptstyle #1}\downarrow}
\newcommand{\upa}[1]{{\scriptstyle #1}\uparrow}
\newcommand{\nea}[1]{{\scriptstyle #1}\nearrow}
\newcommand{\sea}[1]{\searrow {\scriptstyle #1}}
\newcommand{\csti}[3]{(#1+1) (#2)^{1/ (#1+1)} (#1)^{- #1
 / (#1+1)} (#3)^{ #1 / (#1 +1)}}
\newcommand{\RR}[1]{\mathbb{#1}}

\newcommand{ \bl}{\color{blue}}
\newcommand {\rd}{\color{red}}
\newcommand{ \bk}{\color{black}}
\newcommand{ \gr}{\color{OliveGreen}}
\newcommand{ \mg}{\color{RedViolet}}

\newcommand{\ep}{\varepsilon}
\newcommand{\rr}{{\mathbb R}}
\newcommand{\alert}[1]{\fbox{#1}}

\newcommand{\eqd}{\sim}
\def\R{{\mathbb R}}
\def\N{{\mathbb N}}
\def\Q{{\mathbb Q}}
\def\C{{\mathbb C}}
\def\l{{\langle}}
\def\r{\rangle}
\def\t{\tau}
\def\k{\kappa}
\def\a{\alpha}
\def\la{\lambda}
\def\De{\Delta}
\def\de{\delta}
\def\ga{\gamma}
\def\Ga{\Gamma}
\def\ep{\varepsilon}
\def\eps{\varepsilon}
\def\si{\sigma}
\def\Re {{\rm Re}\,}
\def\Im {{\rm Im}\,}
\def\E{{\mathbb E}}
\def\P{{\mathbb P}}
\def\Z{{\mathbb Z}}
\def\D{{\mathbb D}}
\newcommand{\ceil}[1]{\lceil{#1}\rceil}

\title{Parabolic-elliptic chemotaxis model with space-time dependent logistic sources on $\mathbb{R}^N$. II. Existence, uniqueness, and stability of strictly
positive entire solutions}

\author{
Rachidi B. Salako and Wenxian Shen\thanks{Partially supported by the NSF grant DMS--1645673} \\
Department of Mathematics and Statistics\\
Auburn University\\
Auburn University, AL 36849\\
U.S.A. }

\date{}
\maketitle
\begin{abstract}
\noindent
The current work is the second of the series of three papers  devoted to the study of asymptotic dynamics in the following parabolic-elliptic chemotaxis system with space and time dependent logistic source,
\begin{equation}\label{main-eq-abstract}
\begin{cases}
\partial_tu=\Delta u -\chi\nabla\cdot(u\nabla v)+u(a(x,t)-ub(x,t)),\quad x\in\R^N,\cr
0=\Delta v-\lambda v+\mu u ,\quad x\in\R^N,
\end{cases}
\end{equation}
where $N\ge 1$ is a positive integer,  $\chi, \lambda$ and $\mu$ are positive constants,
 and the functions $a(x,t)$ and $b(x,t)$ are positive and bounded.  In the first of the series, we studied the phenomena of pointwise and uniform persistence, and asymptotic spreading in  \eqref{main-eq-abstract} for solutions with compactly supported or front like initials. In the second of the series, we investigate the existence, uniqueness and stability of strictly positive entire solutions of \eqref{main-eq-abstract}. In this direction, we prove that, if $0\leq \mu\chi<\inf_{x,t}b(x,t)$, then \eqref{main-eq-abstract} has a  strictly positive entire solution, which is time-periodic (respectively time homogeneous) when the logistic source function is time-periodic (respectively time homogeneous). Next, we show that there is positive constant $\chi_0$, depending on $N$, $\lambda$, $\mu$, $a$ and $b$ such that for every $0\leq \chi<\chi_0$, \eqref{main-eq-abstract} has a unique positive entire solution which is uniform and exponentially  stable with respect to strictly positive perturbations. In particular, we prove that $\chi_0$ can be taken to be $\inf_{x,t}\frac{b(x,t)}{2\mu}$ when the logistic source function is either space homogeneous or the function $(x,t)\mapsto \frac{b(x,t)}{a(x,t)}$ is constant. We also investigate the disturbances to Fisher-KKP dynamics caused by chemotatic effects, and prove that
 \begin{equation*}
\sup_{0<\chi\leq\chi_1}\sup_{t_0\in\R, t\ge 0}\frac{1}{\chi}\|u_{\chi}(\cdot,t+t_0;t_0,u_0)-u_0(\cdot,t+t_0;t_0,u_0)\|_{\infty}<\infty
\end{equation*}
for every $0<\chi_1<\frac{b_{\inf}}{\mu}$ and every uniformly continuous initial function $u_0$, with $\inf_{x}u_{0}(x)>0$, where $ (u_{\chi}(x,t+t_0;t_0,u_0),v_{\chi}(x,t+t_0;t_0,u_0))$ denotes the unique classical solution of \eqref{main-eq-abstract} with $u_{\chi}(x,t_0;t_0,u_0)=u_0(x)$, for every $0\leq \chi<b_{\inf}$.
\end{abstract}

\medskip
\noindent{\bf Key words.} Parabolic-elliptic chemotaxis system, logistic source, stability, entire solutions, asymptotic spreading, comparison principle.

\medskip
\noindent {\bf 2010 Mathematics Subject Classification.}  35B35, 35B40, 35K57, 35Q92, 92C17.

\section{Introduction and Statement of the Main Results}
Chemotaxis, the ability for micro-organisms to respond to chemical signals by moving along the gradient of the chemical substance,   plays important roles in a wide range of biological phenomena (see \cite{ISM04,DAL1991,KJPJAS03}), and accordingly a considerable literature is concerned with its mathematical analysis. We consider the following parabolic-elliptic  chemotaxis system on $\R^N$ with space-time dependent logistic source,
\begin{equation}\label{P}
\begin{cases}
\partial_tu=\Delta u -\chi\nabla\cdot(u\nabla v)+u(a(x,t)-b(x,t)u),\quad x\in\R^N,\cr
0=\Delta v-\lambda v+\mu u ,\quad x\in\R^N,
\end{cases}
\end{equation}
where $u(x,t)$ and $v(x,t)$ denote mobile species density and chemical density functions, respectively,  $\chi$ is a positive constant which measures the sensitivity with respect to chemical signals, $a(x,t)$ and $b(x,t)$ are positive functions  and measure the self growth and self limitation of the mobile species, respectively. The constant $\mu$ is positive and the term $+\mu u$ in the second equation of \eqref{P} indicates that the mobile species produce the chemical substance over time. The positive constant $\lambda$ measures the degradation rate of the chemical substance.  System \eqref{P}  is a space-time logistic source dependant variation of the celebrated parabolic-elliptic Keller-Segel chemotaxis systems (see  \cite{KeSe1, KeSe2}).

Note that \eqref{P} is a particular case of the following chemotaxis model,
\begin{equation}\label{general model}
\begin{cases}
\partial_tu=\Delta u -\chi\nabla\cdot(u\nabla v)+u(a(x,t)-b(x,t)u),\quad x\in\Omega,\cr
\tau v_t=\Delta v-\lambda v+\mu u ,\quad x\in\Omega
\end{cases}
\end{equation}
complemented with certain boundary conditions if $\Omega \subset \R^N$ is a bounded domain, where $\tau\geq 0$ is a nonnegative constant link to the speed of diffusion of the chemical substance. Note that when $\tau=0$ and $\Omega=\R^N$  in \eqref{general model}, we recover \eqref{P}. Hence, \eqref{P}  models the situation where the chemoattractant defuses very quickly and the underlying environment is very large.

It is well known that chemotaxis systems present very interesting dynamics. Indeed, when $\tau >0$, $N=2$, $a(x,t)\equiv b(x,t)\equiv 0$ and \eqref{general model} is considered on a ball centered at origine associated with homogeneous Neumann condition, Herrero and Vel\`azquez \cite{HeVe3} proved the existence of solutions which blow up at finite time. Under these assumptions but $\tau=0$, J\"ager and Lauckhaus \cite{JaLa}  obtained similar results. Similar results were established by Nagai in \cite{Nagai2}.  We also refer the reader to \cite{HeVe1, HeVe2, Dirk and Winkler, KKAS, win_jde, win_JMAA_veryweak, win_arxiv} ( and the references therein) for some other works on the finite-time blow up of solutions of \eqref{general model}. We refer the reader to \cite{BBTW} and the references therein for more insights in the studies of chemotaxis models.

When $a(x,t)>0$ and $b(x,t)>0$, it is known that the blow-up phenomena may be suppressed to some extent. Indeed, if $a(x,t)$ and $b(x,t)$ are  positive constant functions, $\tau=0$ and  $\lambda=\mu=1$ , it is shown in \cite{TeWi2} that if either  $N\leq 2$ or $b>\frac{N-2}{N}\chi$, then for every nonnegative H\"older's continuous initial $u_0(x)$, \eqref{general model} on bounded domain complemented with Neumann boundary condition possesses a unique  bounded global classical solution $(u(x,t;u_0),v(x,t;u_0))$ with $u(x,0;u_0)=u_0(x)$. Furthermore, if $b>2\chi$, then the trivial steady state $(\frac{a}{b},\frac{a}{b})$ is asymptotically stable with respect to nonnegative and non-identically zero perturbations. These results have been extended by the authors of the current paper,  \cite{SaSh1}, to  \eqref{P} on $\R^N$ when $a(x,t)$ and $b(x,t)$ are constant functions. The work \cite{SaSh1} also studied some spreading properties of solutions to \eqref{P} with compactly supported initials.  
Recently, several studies have been concerned with  establishing adequate conditions on the chemotaxis sensitivity $\chi$ and  other parameters in \eqref{general model} to ensure the existence of time global solutions and the stability of equilibria solutions. In this regard, we refer to \cite{ITBWS16, Issa-Shen, OTYM2002, taowin_persistence, WaMuZh,win_CPDE2010}.   The feature of solutions of \eqref{general model} in the presence of logistic type sources still remains a very interesting problem. Indeed, despite such superlinear absorption terms, it  seems to be that blow-up still is possible. The works \cite{win_JMAA_veryweak} should be mentioned in this direction.  It is also worth to note that bounded solutions may exhibit colorful dynamics, characterized by the emergence of arbitrarily large densities in the flavor of \cite{win_JNLS}, and further results of this type have been achieved in \cite{KKAS}. When $a(x,t)$ and $b(x,t)$ are constant positive functions, $\tau=1$ and  $\lambda=\mu=1$, it is shown in \cite{Win} that it is enough for  $\frac{b}{\chi}$ to be sufficiently large to prevent finite time blow up of classical solutions and to guarantee the stability of the constant equilibrium solution $(\frac{a}{b},\frac{a}{b})$.

Thanks to the space and time dependence of the underlying environments in many biological systems,
it is very important to understand the dynamics of the chemotaxis systems with space and time dependent logistic source.
  Note that, when $\chi=0$, the study of \eqref{general model} reduces to the study of the following  equation
\begin{equation}\label{KPP-Fisher equation}
\partial_t u=\Delta u + u(a(x,t)-b(x,t)u), \  x\in \Omega
\end{equation}
complemented with boundary conditions if $\Omega\subset \R^N$ is a bounded domain, which is called the Fisher-KPP equation in literature due to the pioneering works by Fisher  (\cite{Fisher})  and Kolmogorov, Petrowsky, Piscunov (\cite{KPP}). The literature  about the study of \eqref{KPP-Fisher equation} is quite huge.  In a very recent work \cite{ITBWS16}, the authors studied the dynamics of \eqref{general model}  on bounded domains with Neumann boundary conditions and with  space-time dependent logistic source.

The objective of the series of three papers  is to   study the asymptotic dynamics in the chemotaxis system \eqref{P} on the whole space with space and time dependent logistic source.  In the first of the series, we studied the phenomena of pointwise and uniform persistence, and asymptotic spreading in  \eqref{P} for solutions with compactly supported or front like initials. In this second part of the series, we investigate the existence, uniqueness and stability of strictly positive entire solutions of \eqref{P}. In the rest of the introduction, we introduce notations and standing assumptions,  recall some results established in the first part of the series (\cite{SaSh_6_I}), and state the main results of the current part.

\subsection{Notations and standing assumptions}
For every $x\in\R^N$, let $|x|_{\infty}=\max\{|x_i| \ |\ i=1,\cdots,N\}$ and $|x|=\sqrt{|x_1|^2+\cdots+|x_N|^2}$. For every $x\in\R^N$ and $r>0$ we define $B(x,r):=\{y\in\R^N\ |\ |x-y|<r\}$. For every function $w : \R^N\times I\to \R$, where $I\subset \R$, we set $w_{\inf}(t):=\inf_{x\in\R^N}w(x,t)$, $w_{\sup}(t):=\sup_{x\in\R^N}w(x,t)$, $w_{\inf}=\inf_{x\in\R^N,t\in I}w(x,t)$ and $w_{\sup}=\sup_{x\in\R^N,t\in I}w(x,t)$.  In particular,  for every nonnegative  $u_0\in C^{b}_{\rm unif}(\R^N)$, we set $u_{0\inf}=\inf_{x\in\R^N}u_0(x) $ and $u_{0\sup}=\sup_{x\in\R^N}u_0(x)=\|u_0\|_{\infty}$, where
$$
C_{\rm unif}^b(\R^N)=\{u\in C(\R^N)\,|\, u(x)\quad \text{is uniformly continuous in}\,\,\, x\in\R^N\quad \text{and}\,\, \sup_{x\in\R^N}|u(x)|<\infty\}
$$
equipped with the norm $\|u\|_\infty=\sup_{x\in\R^N}|u(x)|$. For any $0\le \nu<1$, let
$$
C_{\rm unif}^{b,\nu}(\R^N)=\{u\in C_{\rm unif}^b(\R^N)\,|\, \sup_{x,y\in\R,x\not = y}\frac{|u(x)-u(y)|}{|x-y|^\nu}<\infty\}
$$
with norm $\|u\|_{C_{\rm unif}^{b,\nu}}=\sup_{x\in\R}|u(x)|+\sup_{x,y\in\R,x\not =y}\frac{|u(x)-u(y)|}{|x-y|^\nu}$. Hence $C_{\rm unif}^{b,0}(\R^N)=C_{\rm unif}^{b}(\R^N)$.

In what follows we shall always suppose that the following hypothesis holds:

\medskip

\noindent {\bf (H)} {\it $a(x,t)$ and  $b(x,t)$ are uniformly H\"older continuous in $(x,t)\in\R^N\times\R$ with exponent $0<\nu<1$ 
and
$$
 0<\inf_{x\in\R^N,t\in\R}\min\{a(x,t), b(x,t)\} \leq \sup_{x\in \R^N,t\in\R}\max\{a(x,t),b(x,t)\}<\infty.
$$
}

    Let $t_0\in\R$ and $T>t_0$. We say that $(u(x,t),v(x,t))$ is a {\it classical solution } of \eqref{P} on $[t_0, T)$ if
 $(u(\cdot,\cdot),v(\cdot,\cdot))\in C(\R^N\times[t_0,T))\cap C^{2,1}(\R^N\times(t_0,T))$
    and satisfies \eqref{P} for $(x,t)\in\R^N\times(t_0,T)$ in the classical sense.
    When  a classical solution  $(u(x,t),v(x,t))$ of \eqref{P} on $[t_0,T)$ satisfies
    $u(x,t)\geq 0$ and $v(x,t)\geq 0$ for every $(x,t)\in\R^N\times[t_0,T)$, we say that it is nonnegative. A {\it global classical solution}  of \eqref{P} on $[t_0,\infty)$ is a classical solution on $[t_0, T)$ for every $T>0$. We say that $(u(x,t),v(x,t))$ is an
   {\it entire solution} of \eqref{P} if $(u(x,t),v(x,t))$ is a global classical solution of \eqref{P} on $[t_0, \infty)$ for every $t_0\in\R.$
     For given  uniformly continuous  function $u_0$ and $t_0,T\in\R$ with $T>t_0$, if $(u(x,t),v(x,t))$ is a classical solution of \eqref{P} with
     $u(x,t_0)=u_0(x)$ for all $x\in\R$, we
     denote it as
    $(u(x,t;t_0,u_0),v(x,t;t_0,u_0))$ and call it the {\it solution of \eqref{P} with initial function
    $u_0(x)$ at time $t_0$}. We shall also use the notation $(u_{\chi}(x,t;t_0,u_0),v_{\chi}(x,t;t_0,u_0))$ to emphasis on the dependence  of solutions to \eqref{P} on the parameter $\chi\geq 0$.

\subsection{Results established in the first part}

 As it is mentioned in the above, in the first part of the series (\cite{SaSh_6_I}), we studied the phenomena of pointwise and uniform persistence, and asymptotic spreading in  \eqref{P} for solutions with compactly supported or front like initials.
Among others,    the following theorems are proved in
\cite{SaSh_6_I}.

\begin{tm}[Global existence]\label{global-existence-tm}\cite[Theorem 1.1]{SaSh_6_I}
Suppose that $\chi\mu \leq b_{\inf},$
then for every $t_0\in\R$ and nonnegative function $u_0\in C^{b}_{\rm unif}(\R^n)\setminus\{0\}$, \eqref{P} has a unique nonnegative global classical solution $(u(x,t;t_0,u_0)$, $v(x,t;t_0,u_0))$
satisfying
$$
\lim_{t\searrow 0}\|u(\cdot,t_0+t;t_0,u_0)-u_0\|_{\infty}=0.
$$ Moreover, it holds that
\begin{equation}\label{u-upper-bound-eq1}
\|u(\cdot,t+t_0;t_0,u_0)\|_{\infty}\leq \|u_0\|_{\infty}e^{ a_{\sup} t}.
\end{equation}
Furthermore, if $${\bf (H1):}\quad b_{\inf}>\chi\mu$$  holds, then the following hold.
\begin{description}
\item[(i)] For every nonnegative initial function $  u_0\in C^{b}_{\rm unif}(\R^N)\setminus\{0\}$ and $t_0\in\R$, there holds
\begin{equation}\label{u-upper-bound-eq2}
0\leq u(x,t+t_0;t_0,u_0)\leq \max\{ \|u_0\|_{\infty}, \frac{a_{\sup}}{b_{\inf}-\chi\mu}\}\,\, \forall\,\, t\ge 0,\,\, \forall\, x\in\R^N,
\end{equation}
and
\begin{equation}\label{u-upper-bound-eq3}
 \limsup_{t\to\infty} \|u(\cdot,t+t_0;t_0,u_0)\|_{\infty}\leq \frac{a_{\sup}}{b_{\inf}-\chi\mu}.
\end{equation}

\item[(ii)]  For every $u_0\in C_{\rm unif}^b(\R^N)$ with
$\inf_{x\in\R^N}u_0(x)>0$ and $t_0\in\R$, there holds
\begin{equation}
\label{asymptotic-lower-bound}
\frac{a_{\inf}}{b_{\sup}}\leq\limsup_{t\to\infty} \sup_{x\in\R^N}u(x,t+t_0;t_0,u_0),\quad \liminf_{t\to\infty}\inf_{x\in\R^N}u(x,t+t_0;t_0,u_0)\le \frac{a_{\sup}}{b_{\inf}}.
\end{equation}

\item[(iii)] For every positive real number $M>0$, there  is a constant $K_{1}=K_1(\nu,M,a,b)$ such that for every $  u_0\in C^{b}_{\rm unif}(\R^N)$ with $0\leq u_0\leq M$, we have
\begin{equation}\label{uniform-holder-bound-for-v}
\|v(\cdot,t+t_0;t_0,u_0)\|_{C^{1,\nu}_{\rm unif}(\R^N)}\leq K_1, \quad \forall \ t_0\in\R,\ \forall\ t\geq 0.
\end{equation}
\end{description}
\end{tm}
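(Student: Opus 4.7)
The plan is to exploit the elliptic relation $\Delta v=\lambda v-\mu u$ coming from the second equation of \eqref{P} in order to convert the divergence-form chemotaxis drift into a transport term plus a zero-order term. Expanding $-\chi\nabla\cdot(u\nabla v)=-\chi\nabla v\cdot\nabla u-\chi u\Delta v$ and substituting gives the reformulation
\begin{equation*}
\partial_t u=\Delta u-\chi\nabla v\cdot\nabla u+u\bigl[a(x,t)-\chi\lambda v\bigr]-(b(x,t)-\chi\mu)u^2.
\end{equation*}
This is the structural identity that drives everything: when $\chi\mu\leq b_{\inf}$ the coefficient of $u^2$ is pointwise nonpositive, and since $u\geq 0$ forces $v\geq 0$ via the maximum principle applied to the elliptic equation, the zero-order term $-\chi\lambda v$ also has the right sign. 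Consequently the whole right-hand side is bounded above by $a_{\sup}u$.

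For local well-posedness, I would run a contraction argument in $C^b_{\rm unif}(\R^N)$. Given $u$, define $v=\mu(\lambda I-\Delta)^{-1}u$; positivity of $\lambda$ makes this resolvent bounded on $C^b_{\rm unif}$, and standard elliptic regularity yields the Hölder bound $\|\nabla v\|_{C^{\nu}_{\rm unif}}\lesssim\|u\|_\infty$, which is the regularity needed to treat $\chi\nabla v$ as a transport coefficient. Plugging this into the parabolic equation for $u$ and using the heat semigroup on $\R^N$, contraction on a small time interval produces a unique local classical solution preserving uniform continuity. Global continuation then follows from the usual blow-up criterion once the sup-norm a priori bound is in hand.

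The a priori estimates come from parabolic comparison against spatially constant solutions of ODEs. From the reformulated equation and $v\geq 0$,
\begin{equation*}
\partial_t u-\Delta u+\chi\nabla v\cdot\nabla u\leq a_{\sup}u-(b_{\inf}-\chi\mu)u^2.
\end{equation*}
When $\chi\mu=b_{\inf}$ the quadratic drops out and comparison with $U(t)=\|u_0\|_\infty e^{a_{\sup}t}$ gives \eqref{u-upper-bound-eq1}. Under (H1) the full logistic ODE $U'=a_{\sup}U-(b_{\inf}-\chi\mu)U^2$ provides a super-solution, yielding \eqref{u-upper-bound-eq2} and, by sending $t\to\infty$, \eqref{u-upper-bound-eq3}. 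For the lower estimates in (ii), one uses the elliptic bound $v\leq\frac{\mu}{\lambda}\|u\|_\infty$ (finite by (i)) so that at a spatial infimum of $u$ the rewritten equation reduces to a sub-ODE; combined with the strong maximum principle propagating strict positivity of $u_0$, the relevant thresholds $a_{\inf}/b_{\sup}$ and $a_{\sup}/b_{\inf}$ emerge. Finally, (iii) follows directly from interior Schauder estimates for $-\Delta v+\lambda v=\mu u$ on $\R^N$, once $\|u\|_\infty\leq M$.

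The hardest step is the local existence, because the transport coefficient $\nabla v$ is a non-local function of $u$ on the unbounded domain $\R^N$; closing a contraction requires carefully tracking Hölder regularity through the elliptic resolvent and balancing it against the smoothing of the heat semigroup, all uniformly in $x\in\R^N$. A secondary technical point is justifying the comparison principle for the nondivergence-form operator $\partial_t-\Delta+\chi\nabla v\cdot\nabla$ on the whole space against spatially constant barriers; this is routine for bounded solutions via Phragmén--Lindelöf-type arguments, but the verification that one stays in that class while running the bootstrap deserves explicit attention.
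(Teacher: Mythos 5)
This theorem is not proved in the present paper at all: it is quoted verbatim from Part I of the series (\cite[Theorem 1.1]{SaSh_6_I}), so there is no in-paper argument to compare against line by line. That said, your outline follows exactly the route that the series visibly uses elsewhere (e.g.\ in the proofs of Theorems \ref{existence-entire-sol} and \ref{New-asymp-lem 1}): rewrite $-\chi\nabla\cdot(u\nabla v)$ via $\Delta v=\lambda v-\mu u$ to get $u_t=\Delta u-\chi\nabla v\cdot\nabla u+u\bigl(a-\chi\lambda v-(b-\chi\mu)u\bigr)$, obtain local existence by a semigroup/contraction argument in $C^b_{\rm unif}(\R^N)$ with $v=\mu(\lambda I-\Delta)^{-1}u$, and then close global existence and \eqref{u-upper-bound-eq1}--\eqref{u-upper-bound-eq3} by comparison with spatially homogeneous logistic ODEs, treating $\nabla v$ as a bounded drift. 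In that sense your proposal is essentially the intended proof, including the observation that for \eqref{u-upper-bound-eq1} one only needs $(b-\chi\mu)u^2\ge 0$, not equality $\chi\mu=b_{\inf}$.

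Two points deserve correction. First, for part (ii) your sketch cites only the upper elliptic bound $v\le\frac{\mu}{\lambda}\|u\|_\infty$; with that (or with mere nonnegativity of $v$) a contradiction argument for the second inequality only yields the weaker threshold $\frac{a_{\sup}}{b_{\inf}-\chi\mu}$, not $\frac{a_{\sup}}{b_{\inf}}$. To get the stated constant you must also use the lower bound $\lambda v(\cdot,t)\ge\mu\inf_x u(\cdot,t)$, coming from positivity and total mass $\mu/\lambda$ of the kernel of $(\lambda I-\Delta)^{-1}$: assuming $\inf_x u\ge B>\frac{a_{\sup}}{b_{\inf}}$ for all large $t$, the zero-order coefficient is at most $a_{\sup}-\chi\mu B-(b_{\inf}-\chi\mu)B=a_{\sup}-b_{\inf}B<0$, so $\sup_x u$ decays exponentially, a contradiction; the first inequality is the symmetric argument with the upper bound on $v$ and threshold $a_{\inf}-b_{\sup}A>0$. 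Second, in (iii) interior Schauder estimates are not the right tool, since the hypothesis only gives $\|u\|_\infty\le M$ (a Schauder constant would depend on a H\"older norm of $u$); the uniform $C^{1,\nu}_{\rm unif}$ bound for $v$ should instead come from elliptic $W^{2,p}$ estimates plus Sobolev embedding, or directly from the explicit Bessel-type kernel representation of $(\lambda I-\Delta)^{-1}$, which is what the companion paper's Lemma 2.2 (quoted here) relies on. With these repairs the argument is sound.
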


\begin{tm}\label{Main-thm1}\cite[Theorem 1.2]{SaSh_6_I}
\begin{itemize}
\item[(i)] (Pointwsie persistence) Suppose that {\bf (H1)} holds. Then pointwise persistence occurs in \eqref{P}, that is,
for any  $u_0\in C_{\rm unif}^b(\mathbb{R}^N)$ with $\inf_{x\in\mathbb{R}^N}u_0(x)>0$
(such $u_0$ is called {\it strictly positive}),
there { exist positive real numbers} $m(u_0)>0$ { and $M(u_0)>0$} such that
 \begin{equation}
 \label{persistence-eq1-0}
 m(u_0)\le u(x,t+t_0;t_0,u_0)\le  M(u_0)\,\, \forall\,\, t_0\in\mathbb{R}\,\, {\rm and}\,\, t>0.
 \end{equation}

 \item[(ii)] (Uniform persistence) 
  Suppose that {\bf (H1)} holds.
 If, furthermore,  $$
{\bf (H2)}: \quad  b_{\inf}>(1+\frac{a_{\sup}}{a_{\inf}})\chi\mu
$$ holds,  then uniform persistence occurs in \eqref{P}, that is,
there are $0<m<M<\infty$ such that for any  $t_0\in\R$ and any positive initial function $u_{0}\in C^{b}_{\rm unif}(\R^N)$ with $\inf_{x\in\R}u_0(x)>0$, there exists $T(u_0)>0$ such that
 $$
 m\le u(x,t+t_0;t_0,u_0)\le M\quad \forall\,\, t\ge { T(u_0),\,\,\forall\,x\in\R^N, \,\, \forall\,\, t_0\in\R.}
 $$

 In particular, for every strictly positive initial $u_0\in C^b_{\rm unif}(\R^N)$
 {\rm (}i.e. $u_0\in C_{\rm unif}^b(\R^N)$ with $u_{0\inf}>0${\rm)} and $\varepsilon>0$, there is $T_{\varepsilon}(u_0)>0$ such that such that the unique classical solution $(u(x,t+t_0;t_0,u_0), v(x,t+t_0;t_0,u_0))$ of \eqref{P} with $u(\cdot,t_0;t_0,u_0)=u_0(\cdot)$ satisfies
\begin{equation}\label{attracting-rect-eq1}
\underline{M}-\varepsilon\leq u(x,t+t_0;t_0,u_0)\leq \overline{M}+\varepsilon, \quad \forall t\geq T_{\varepsilon}(u_0), \ x\in\R^N,\,\,\forall\, t_{0}\in\R
\end{equation} and
\begin{equation}\label{attracting-rect-eq1'}
\frac{\mu\underline{M}}{\lambda}-\varepsilon\leq v(x,t+t_0;t_0,u_0)\leq \frac{\mu\overline{M}}{\lambda}+\varepsilon, \quad \forall t\geq T_{\varepsilon}(u_0), \ x\in\R^N,\,\,\forall\, t_{0}\in\R
\end{equation}
where
\begin{equation}\label{attracting-rect-eq2}
\underline{M}:=\frac{(b_{\inf}-\chi\mu)a_{\inf}-\chi\mu a_{\sup} }{(b_{\sup}-\chi\mu)(b_{\inf}-\chi\mu)-(\chi\mu)^2}>\frac{a_{\inf}-\frac{\chi\mu a_{\sup}}{b_{\inf}-\chi\mu} }{b_{\sup}-\chi\mu}
\end{equation}
and
\begin{equation}\label{attracting-rect-eq3}
\overline{M}:=\frac{(b_{\sup}-\chi\mu)a_{\sup}-\chi\mu a_{\inf} }{(b_{\sup}-\chi\mu)(b_{\inf}-\chi\mu)-(\chi\mu)^2}< \frac{a_{\sup}}{b_{\inf}-\chi\mu}.
\end{equation}
Furthermore, the set
\begin{equation} \label{Invariant set} \mathbb{I}_{inv}:=\{ u\in C^b_{\rm unif}(\R^N)\ : \ \underline{M}\leq u_{0}(x)\leq \overline{M}, \ \forall\, x\in\R^N\}
\end{equation} is a positively invariant set for solutions of \eqref{P}, in the sense that for every $t_0\in\R$ and $u_0\in\mathbb{I}_{inv}$, we have that $u(\cdot,t+t_0;t_0,u_0)\in\mathbb{I}_{inv}$ for every $t\geq0$.
\end{itemize}
\end{tm}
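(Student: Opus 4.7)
My plan is to reduce the PDE problem to ODE analysis via the parabolic comparison principle. Rewriting the $u$-equation using the elliptic constraint $\Delta v=\lambda v-\mu u$,
\begin{equation*}
\partial_t u-\Delta u+\chi\nabla v\cdot\nabla u \;=\; u\bigl[a(x,t)-\chi\lambda v(x,t)\bigr]-(b(x,t)-\chi\mu)u^2,
\end{equation*}
makes the quadratic coefficient strictly positive under \textbf{(H1)}, with $\chi\nabla v$ a bounded drift by Theorem~\ref{global-existence-tm}(iii). The elliptic maximum principle applied to $(\lambda-\Delta)v=\mu u$ gives $\tfrac{\mu}{\lambda}u_{\inf}(t)\le v(x,t)\le\tfrac{\mu}{\lambda}u_{\sup}(t)$, so a spatially constant $\phi(t)$ is a sub-solution whenever $\phi'\le\phi[a_{\inf}-\chi\lambda v_{\sup}(t)]-(b_{\sup}-\chi\mu)\phi^2$, with the symmetric condition producing super-solutions.

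For part (ii), under \textbf{(H2)} I would iterate these ODE bounds to squeeze $u$ into an invariant rectangle. Starting from $K_0:=a_{\sup}/(b_{\inf}-\chi\mu)$ supplied by Theorem~\ref{global-existence-tm}(i), so $v\le\mu K_0/\lambda+\eps$ eventually, the sub-solution ODE $\phi'=\phi[a_{\inf}-\chi\mu K_0]-(b_{\sup}-\chi\mu)\phi^2$ has the positive equilibrium $m_1=(a_{\inf}-\chi\mu K_0)/(b_{\sup}-\chi\mu)$, positive exactly because \textbf{(H2)} rewrites as $a_{\inf}(b_{\inf}-\chi\mu)>\chi\mu a_{\sup}$. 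Comparison gives $\liminf_{t\to\infty}\inf_x u\ge m_1$; then $v\ge\mu m_1/\lambda$ asymptotically, and the super-solution ODE yields the refined upper bound $K_1=(a_{\sup}-\chi\mu m_1)/(b_{\inf}-\chi\mu)<K_0$. Iterating produces monotone sequences $K_n\searrow K_*$ and $m_n\nearrow m_*$ whose limits solve the linear system
\[
(b_{\inf}-\chi\mu)K_*+\chi\mu m_*=a_{\sup},\qquad (b_{\sup}-\chi\mu)m_*+\chi\mu K_*=a_{\inf},
\]
whose Cramer's rule solution is precisely $(\overline{M},\underline{M})$ from \eqref{attracting-rect-eq2}--\eqref{attracting-rect-eq3}. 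The bound \eqref{attracting-rect-eq1'} on $v$ follows by applying the elliptic maximum principle to \eqref{attracting-rect-eq1}, and the invariance of $\mathbb{I}_{\rm inv}$ is immediate since $\overline{M}$ and $\underline{M}$ are themselves spatially constant super- and sub-equilibria of the full PDE.

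For part (i), $M(u_0):=\max\{u_{0\sup},a_{\sup}/(b_{\inf}-\chi\mu)\}$ is the upper bound supplied by Theorem~\ref{global-existence-tm}(i). For the lower bound under only \textbf{(H1)}, I would split $t>0$ into $[0,T]$ and $[T,\infty)$: $L^\infty$-theory (Feynman--Kac or Schauder) for the linear-in-$u$ parabolic equation with bounded drift $\chi\nabla v$ and bounded zeroth-order coefficient propagates $u_{0\inf}>0$ to $u(x,t)\ge c(T,u_0)u_{0\inf}>0$ on $[0,T]$, while on $[T,\infty)$ the parabolic Harnack inequality on unit space-time cylinders, with constant depending only on $M(u_0)$ and the $C^{1,\nu}$-norm of $v$ from Theorem~\ref{global-existence-tm}(iii), converts this pointwise positivity into a uniform floor $m(u_0)>0$ valid on all of $\R^N\times(0,\infty)$ and uniformly in $t_0\in\R$ by translation invariance of the Harnack constant. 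The principal obstacle is precisely this Harnack step: without \textbf{(H2)} the sub-solution ODE's linear coefficient $a_{\inf}-\chi\mu M(u_0)$ may be non-positive and ODE comparison alone gives only decaying lower bounds, so the genuine parabolic machinery (not needed in part (ii), where the ODE rectangle argument is self-contained) is forced on us to upgrade pointwise positivity into a uniform-in-space-and-time floor.
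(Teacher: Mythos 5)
The statement you are proving is quoted from Part I of the series, so the present paper contains no proof of it; but the machinery recalled in Section 2 (the Dirichlet eigenvalue problem \eqref{aux-eq1-2} on the boxes $D_L$, the perturbed evolution operator with property \eqref{au-eq1-5}, and Lemma \ref{main-new-lm1} together with \eqref{aux-eq4-2}) indicates the intended route for part (i). Your treatment of part (ii) is essentially sound: the interleaved ODE comparison, starting from $K_0=a_{\sup}/(b_{\inf}-\chi\mu)$ and iterating $m_{n+1}=(a_{\inf}-\chi\mu K_n)/(b_{\sup}-\chi\mu)$, $K_{n+1}=(a_{\sup}-\chi\mu m_{n+1})/(b_{\inf}-\chi\mu)$, is an affine contraction (ratio $(\chi\mu)^2/\big((b_{\inf}-\chi\mu)(b_{\sup}-\chi\mu)\big)<1$ under {\bf (H2)}) whose fixed point is exactly $(\underline{M},\overline{M})$ of \eqref{attracting-rect-eq2}--\eqref{attracting-rect-eq3}, and your identification of {\bf (H2)} with $a_{\inf}(b_{\inf}-\chi\mu)>\chi\mu a_{\sup}$ is correct. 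One small caveat: the constants $\overline{M}$, $\underline{M}$ are super/sub-solutions only as a \emph{coupled} pair (the supersolution inequality for $\overline{M}$ needs $\chi\lambda v\geq \chi\mu\underline{M}$, i.e.\ the lower bound on $u$, and vice versa), so the invariance of $\mathbb{I}_{inv}$ is not literally immediate; it requires a simultaneous first-crossing-time argument, which is routine but should be said.

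The genuine gap is in part (i). The parabolic Harnack inequality compares $\sup$ over an earlier cylinder with $\inf$ over a later one, with a constant $C>1$ per unit time step; iterating it in time gives at best $\inf_{x} u(\cdot,t_0+n)\gtrsim C^{-n}u_{0\inf}$, i.e.\ an exponentially \emph{decaying} lower bound, which is no better than the finite-time propagation estimate of Lemma \ref{main-lem1}. Harnack (or Feynman--Kac/Schauder) uses only boundedness of the coefficients and therefore cannot rule out $u\to 0$ uniformly as $t\to\infty$; persistence is precisely the assertion that the reaction structure forbids this, and under {\bf (H1)} alone your ODE subsolution has a possibly negative linear coefficient, as you note, so no comparison with bounded coefficients closes the argument. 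What is needed is the contradiction argument the recalled lemmas are built for: if $u$ were uniformly small on a large box $D_{3L}(x_0)$ over a unit time interval, then by Lemma \ref{main-new-lm1} and \eqref{aux-eq4-2} both $\chi\lambda v$ and $\chi|\nabla v|$ are small on $D_L(x_0)$, so on that box $u$ is a supersolution of $u_t=\Delta u+b_\varepsilon\cdot\nabla u+\tfrac{a_{\inf}}{3}u$ with Dirichlet data and small drift; since the principal eigenvalue $\sigma_{L,\varepsilon}$ is negative for $L$ large, \eqref{au-eq1-5} forces $\sup u=\infty$, contradicting the global bound $u\le M(u_0)$. This ``small $u$ $\Rightarrow$ small chemotaxis $\Rightarrow$ linear instability on large boxes'' step (used verbatim in Claim 1 of the proof of Theorem \ref{existence-entire-sol}(iv) in this paper) is the missing idea; without it your part (i) does not yield a time-uniform floor $m(u_0)>0$.
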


{
\begin{rk}\label{new-rk0}
Using the pointwise persistence established in Theorem \ref{Main-thm1} (i) it can be shown that for any $0<\chi<\frac{b_{\inf}}{\mu}$ and $u_0\in C^b_{\rm unif}(\R^N)$ with $u_{0\inf}>0$, there holds
$$
\limsup_{t\to\infty}\|u(\cdot,t+t_0;t_0,u_0)\|_{\infty}\leq \frac{a_{\sup}-\chi\mu m(u_0)}{b_{\inf}-\chi\mu},
$$
where the limit is uniform in $t_0\in\R$. This  result will be useful when studying asymptotic  dynamics of solutions of \eqref{P} associated to strictly positive initial.
\end{rk}
}

\begin{tm}[Asymptotic spreading]\label{spreading-properties}\cite[Theorem 1.3]{SaSh_6_I}
\begin{itemize}
\item[(1)] Suppose that {\bf (H1)} holds. Then for every $t_0\in \R$ and every nonnegative initial function $u_0\in C^b_{\rm unif}(\R^N)$ with nonempty compact support $supp(u_0)$, we have that
\begin{equation}
\lim_{t\to\infty}\sup_{|x|\geq ct}u(x,t+t_0;t_0,u_0)=0, \quad \forall c> c_{+}^{*},
\end{equation}
where
\begin{equation}
c_{+}^{*}(a,b,\chi,\lambda,\mu):=2\sqrt{a_{\sup}}+ \frac{\chi\mu\sqrt{N}a_{\sup}}{2(b_{\inf}-\chi\mu)\sqrt{\lambda}}.
\end{equation}

\item[(2)]
Suppose that
\begin{equation}
{\bf (H3)}: \ b_{\inf}>\left(1+\frac{\Big(1+\sqrt{1+\frac{Na_{\inf}}{4\lambda}}\Big)a_{\sup}}{2a_{\inf}}\right)\chi\mu.
\end{equation}
Then for every $t_0\in\R$ and nonnegative initial function $u_0\in C^b_{\rm unif}(\R^N)$ with nonempty support $supp(u_0)$, we have that
\begin{equation}\label{lower-bound-spreading-speed}
\liminf_{t\to\infty}\inf_{|x|\leq ct}u(x,t+t_0;t_0,u_0)>0, \quad \forall 0\leq c< c_{-}^{*}(a,b,\chi,\lambda,\mu),
\end{equation}
where
\begin{equation}
c_{-}^*(a,b,\chi,\lambda,\mu):=2\sqrt{a_{\inf}-\frac{\chi\mu a_{\sup}}{b_{\inf}-\chi\mu}}-\chi\frac{\mu\sqrt{N}a_{\sup}}{2\sqrt{\lambda}(b_{\inf}-\chi\mu)}.
\end{equation}
\end{itemize}
\end{tm}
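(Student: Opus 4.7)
Both conclusions are comparison arguments based on the reformulation
\[
\partial_t u = \Delta u -\chi \nabla u\cdot\nabla v + u\bigl(a(x,t) - \chi\lambda v + (\chi\mu - b(x,t))u\bigr),
\]
obtained from $\nabla\cdot(u\nabla v) = \nabla u\cdot\nabla v + u\Delta v$ after substituting $\Delta v = \lambda v - \mu u$. Two uniform ingredients feed both parts: (i) the $L^\infty$-bound $\limsup_{t\to\infty}\|u(\cdot,t+t_0;t_0,u_0)\|_\infty\le a_{\sup}/(b_{\inf}-\chi\mu)$ from Theorem \ref{global-existence-tm}, uniformly in $t_0$; and (ii) the gradient bound $\|\nabla v(\cdot,t)\|_\infty\le \frac{\mu\sqrt N}{2\sqrt\lambda}\|u(\cdot,t)\|_\infty$, which I would derive from the Bessel-potential representation $v=\mu(\lambda I-\Delta)^{-1}u$ together with componentwise $L^1$ control of the gradient of the Green function on $\R^N$.

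\textbf{Part (1).} Since $u,v\ge 0$ and $\chi\mu-b\le 0$, the reformulated equation gives $\partial_t u\le \Delta u-\chi\nabla u\cdot\nabla v + a_{\sup}u$. For $c>c_+^*$, pick $c_1\in(c_+^*,c)$ and test the ansatz $\bar u(x,t) = Me^{-\alpha(|x|-c_1 t)}$. Using $\Delta\bar u\le\alpha^2\bar u$ and $-\chi\nabla\bar u\cdot\nabla v\le\chi\alpha\|\nabla v\|_\infty\bar u$, the super-solution inequality collapses to the dispersion bound $c_1\ge\alpha + a_{\sup}/\alpha+\chi\|\nabla v\|_\infty$, whose minimum over $\alpha>0$ is $2\sqrt{a_{\sup}}+\chi\|\nabla v\|_\infty$; combined with (i)-(ii) this is at most $c_+^*<c_1$. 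Choosing $M$ so large that $\bar u(\cdot,0)\ge u_0$ (possible since $u_0$ is compactly supported) and applying the parabolic comparison principle yields $u(x,t)\le \bar u(x,t)$, and hence $\sup_{|x|\ge ct}u\le Me^{-\alpha(c-c_1)t}\to 0$.

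\textbf{Part (2).} Hypothesis (H3) is strictly stronger than (H2), so Theorem \ref{Main-thm1}(ii) furnishes uniform persistence, and the strong maximum principle then provides $T_1,\delta,R_0>0$ with $u(\cdot,T_1+t_0;t_0,u_0)\ge \delta\mathbf{1}_{B(0,R_0)}$. For $t\ge T_1$, bounding $\chi\lambda v\le \chi\mu a_{\sup}/(b_{\inf}-\chi\mu)$ (via $v\le\mu\|u\|_\infty/\lambda$) and $(\chi\mu-b)u\ge -(b_{\sup}-\chi\mu)u$ in the reformulated equation yields
\[
\partial_t u\ge \Delta u-\chi\nabla u\cdot\nabla v + u(a^* - b^* u),\quad a^*:=a_{\inf}-\tfrac{\chi\mu a_{\sup}}{b_{\inf}-\chi\mu},\ b^*:=b_{\sup}-\chi\mu.
\]
For any $c<c_-^*$ I would construct a radial compactly supported sub-solution $\underline u(x,t)=\delta'\phi(|x-x_0|-ct)$ with $\phi\ge 0$, $\phi'\le 0$. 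Since $\phi'\le 0$, the drift estimate $-\chi\nabla\underline u\cdot\nabla v\ge \chi\phi'\|\nabla v\|_\infty$ reduces the sub-solution inequality, after absorbing the lower-order curvature term valid for $|x-x_0|$ large and the nonlinearity by choosing $\delta'$ small, to $\phi''+(c+\chi^*)\phi'+a^*\phi\ge 0$ with $\chi^*:=\chi\|\nabla v\|_\infty$. Its characteristic equation has complex conjugate roots precisely when $c<c_-^*$, yielding, via the classical Aronson-Weinberger truncated-oscillation construction, a nontrivial compactly supported profile $\phi$. Varying $x_0$ over $\{|x_0|\le ct\}$ and comparing with $u(\cdot,t+t_0;t_0,u_0)$ then delivers \eqref{lower-bound-spreading-speed}.

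\textbf{Main obstacle.} The core difficulty lies in Part (2): the chemotaxis drift $-\chi\nabla u\cdot\nabla v$ has indefinite sign, and the only unconditional estimate uses $\|\nabla v\|_\infty$, forcing the subtraction of $\chi^*$ from the Fisher-KPP speed $2\sqrt{a^*}$. A direct algebraic check converts $c_-^*>0$ into exactly (H3), which is why the hypothesis cannot be relaxed within this strategy.
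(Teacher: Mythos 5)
You should first note that this paper does not actually prove Theorem \ref{spreading-properties}: it is quoted verbatim from Part I (\cite[Theorem 1.3]{SaSh_6_I}), so there is no in-paper proof to compare against. Judged on its own, your scheme is the evidently intended one: rewrite $-\chi\nabla\cdot(u\nabla v)$ via $\Delta v=\lambda v-\mu u$, control $\lambda v$ by $\mu\|u\|_\infty$ and the drift by $\chi\|\nabla v\|_\infty$ together with $\limsup_t\|u\|_\infty\le a_{\sup}/(b_{\inf}-\chi\mu)$, then use exponential supersolutions for part (1) and compactly supported traveling subsolutions for part (2); your algebraic check that $c_-^*>0$ is exactly {\bf (H3)} is correct and confirms that the constants in the statement come from precisely this bookkeeping.

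There are, however, concrete gaps. First, the key estimate $\|\nabla v\|_\infty\le \frac{\sqrt N\,\mu}{2\sqrt\lambda}\|u\|_\infty$ does not follow from ``componentwise $L^1$ control of the gradient of the Green function'' as you state it: the exact computation gives $\|\partial_{x_i}K\|_{L^1(\R^N)}=\lambda^{-1/2}$ for the kernel $K$ of $(\lambda I-\Delta)^{-1}$, hence only $\|\nabla v\|_\infty\le\frac{\sqrt N\,\mu}{\sqrt\lambda}\|u\|_\infty$. The extra factor $\tfrac12$ — which is exactly what the stated $c_\pm^*$ encode — requires using that $\partial_{x_i}K$ is odd in $x_i$ (hence mean zero) together with $0\le u\le\|u\|_\infty$, i.e.\ applying the $L^1$ bound to $u-\tfrac12\|u\|_\infty$; without this your argument proves the theorem only with a drift term twice as large (a bigger $c_+^*$, a smaller $c_-^*$, and a hypothesis strictly stronger than {\bf (H3)}). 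Second, in part (2) your appeal to Theorem \ref{Main-thm1}(ii) is inadmissible: a compactly supported $u_0$ has $u_{0\inf}=0$, so uniform persistence does not apply; it is also unnecessary, since the seed $u(\cdot,T_1+t_0)\ge\delta$ on a fixed ball follows from the strong maximum principle and the needed upper bounds from Theorem \ref{global-existence-tm}. Finally, several steps you gloss over carry real content: the bound $\|u\|_\infty\le\frac{a_{\sup}}{b_{\inf}-\chi\mu}+\varepsilon$ holds only after a transient, so the sharp comparison functions must be started at a later time, which in part (1) requires first propagating exponential spatial decay of $u(\cdot,T+t_0)$ by a cruder, faster supersolution; $e^{-\alpha(|x|-c_1t)}$ is not smooth at $x=0$ (use the planar family $Me^{-\alpha(x\cdot\xi-c_1t)}$, $|\xi|=1$, instead); and in part (2) the truncated-oscillation profile is not monotone, whereas your worst-case drift estimate needs $\phi'\le0$, so the profile must be monotonized (e.g.\ capped by its value at the first critical point), the curvature term $\frac{N-1}{r}\phi'$ absorbed, and the usual iteration over moving balls carried out to pass from a single subsolution to $\liminf_{t\to\infty}\inf_{|x|\le ct}u>0$. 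These are all fixable within your strategy, but as written the proposal does not yet yield the stated speeds.
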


\subsection{Main results of the current part}

 Assume {\bf (H1)}. By Theorems \ref{global-existence-tm} and \ref{Main-thm1}, for any $t_0\in\R$ and strictly positive $u_0\in C_{\rm unif}^b(\R^N)$,
$$
0<\liminf_{t\to\infty}\inf_{x\in\R^N}u(x,t+t_0;t_0,u_0)\le\limsup_{t\to\infty}\sup_{x\in\R^N}u(x,t+t_0;t_0,u_0)<\infty.
$$
Naturally, it is important to know whether there is a {\it strictly positive entire solution}, that is,
an entire solution $(u^+(x,t),v^+(x,t))$ of \eqref{P} with $\inf_{x\in\R^N,t\in\R}u^+(x,t)>0$.
It is also important to know the uniqueness and stability of strictly  entire positive solutions of \eqref{P} (if exist).
We have the following result on the existence of strictly positive entire solutions.

\begin{tm}[Existence of strictly  positive entire solutions]
\label{existence-entire-sol}
Suppose that {\bf (H1)} holds. Then \eqref{P} has a strictly  positive entire solution $(u^+(x,t),v^+(t,x))$. Moreover, the following hold.
\begin{description}
\item[(i)] Any strictly positive entire solution $(u^+(x,t),v^+(x,t))$ of \eqref{P} satisfies
\begin{equation}\label{bound for positive sol}
\frac{a_{\inf}}{b_{\sup}}\leq \sup_{(x,t)\in\R^{N}\times\R}u^+(x,t) \leq \frac{a_{\sup}}{b_{\inf}-\chi\mu}.
\end{equation}
\item[(ii)] If {\bf (H2)} holds, then any strictly  positive entire solution $(u^+(x,t),v^+(x,t))$ of \eqref{P} satisfies
\begin{equation}\label{attracting-rect-eq0}
\underline{M}\leq u^+(x,t)\leq \overline{M},\quad \forall x\in\R^N, \ \,\forall t\in\R.\
\end{equation}
 where $\underline{M}$ and $\overline{M}$ are given by \eqref{attracting-rect-eq2} and \eqref{attracting-rect-eq3}, respectively.

\item[(iii)] If there is $T>0$ such that $a(x,t+T)=a(x,t)$ and $b(x,t+T)=b(x,t)$ for very $x\in\R^N$,  $t\in\R$, then \eqref{P} has a strictly positive entire  solution $(u^+(x,t),v^+(x,t))$ satisfying  $(u^+(x,t+T),v^+(x,t+T))=(u^+(x,t),v^+(x,t))$ for every $x\in\R^N$, $t\in\R$.

\item[(iv)] If $a(x,t)=a(x)$ and $b(x,t)=b(x)$, then \eqref{P} has a strictly positive steady state solution.
\end{description}

\end{tm}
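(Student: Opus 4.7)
The plan is to construct a strictly positive entire solution by an approximation-from-the-past procedure, then to derive the bounds (i) and (ii) from Theorems \ref{global-existence-tm} and \ref{Main-thm1} by re-initialising any entire solution at time $t_0-s$ and sending $s\to\infty$, and finally to obtain the periodic/steady-state statements (iii) and (iv) via a Schauder--Tychonoff fixed-point argument on the period-$T$ map.

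For the existence statement, I fix any constant $c>0$ and take the initial datum $u_0(x)\equiv c\in C^b_{\rm unif}(\R^N)$. For each $n\in\N$ let $(u_n,v_n):=(u(\cdot,\cdot;-n,u_0),v(\cdot,\cdot;-n,u_0))$ be the global solution supplied by Theorem \ref{global-existence-tm}. Because the initial datum is the same for every $n$, Theorem \ref{Main-thm1}(i) provides $n$-independent constants $m(u_0)>0$ and $M(u_0)>0$ with $m(u_0)\le u_n(x,t)\le M(u_0)$ for all $x\in\R^N$ and $t>-n$. Combining the uniform $C^{1,\nu}_{\rm unif}$-bound on $v_n$ from Theorem \ref{global-existence-tm}(iii) with interior parabolic Schauder estimates applied to the equation for $u_n$ yields uniform $C^{2+\nu,1+\nu/2}$ control of $u_n$ on every compact subset of $\R^N\times\R$ once $n$ is large enough. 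A standard diagonal Arzel\`a--Ascoli extraction then produces a subsequence converging in $C^{2,1}_{\rm loc}(\R^N\times\R)$ to a classical entire solution $(u^+,v^+)$ of \eqref{P} with $m(u_0)\le u^+\le M(u_0)$, hence strictly positive.

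For (i), let $(u^+,v^+)$ be an arbitrary strictly positive entire solution. For fixed $(x_0,t_0)$ and any $s>0$, uniqueness yields $u^+(x_0,t_0)=u(x_0,t_0;t_0-s,u^+(\cdot,t_0-s))$, and the datum $u^+(\cdot,t_0-s)$ lies in $C^b_{\rm unif}(\R^N)$ with $s$-uniform upper and positive lower bounds. Letting $s\to\infty$ in \eqref{u-upper-bound-eq3} gives the upper bound in \eqref{bound for positive sol}; the lower bound follows in the same way from \eqref{asymptotic-lower-bound} by taking the supremum in $x$. For (ii), I apply the same re-initialisation strategy to \eqref{attracting-rect-eq1} under (H2); the key point is that the time $T_\varepsilon(u_0)$ in Theorem \ref{Main-thm1}(ii) can be chosen to depend on $u_0$ only through $u_{0\inf}$ and $\|u_0\|_\infty$, both of which are uniformly controlled along the family $\{u^+(\cdot,t_0-s)\}_{s>0}$; sending $s\to\infty$ and then $\varepsilon\to0$ produces \eqref{attracting-rect-eq0}.

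For (iii), I assume $a$ and $b$ are $T$-periodic in $t$ and introduce the Poincar\'e period map $P(w):=u(\cdot,T;0,w)$. With $m,M>0$ from Theorem \ref{Main-thm1}(i) applied to $u_0\equiv c$ and $K>0$ provided by the parabolic smoothing estimate at time $T$, I set
$$
\mathcal{K}:=\bigl\{w\in C^{b,\nu}_{\rm unif}(\R^N)\,:\,m\le w\le M,\ \|w\|_{C^{b,\nu}_{\rm unif}}\le K\bigr\}.
$$
Then $\mathcal{K}$ is convex and compact in the compact-open topology (Arzel\`a--Ascoli), the choice of $m,M,K$ ensures $P(\mathcal{K})\subseteq\mathcal{K}$, and $P$ is continuous in this topology. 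Schauder--Tychonoff then yields a fixed point $w^*\in\mathcal{K}$ whose orbit, extended by $T$-periodicity of $a,b$, is the desired strictly positive $T$-periodic entire solution. For (iv) with autonomous coefficients, applying (iii) with a sequence $T_n\downarrow 0$ produces $T_n$-periodic entire solutions satisfying the same uniform bounds; extracting a $C^{2,1}_{\rm loc}$-limit and using that $\bigcup_n T_n\Z$ is dense in $\R$ forces the limit to be time-independent, hence a strictly positive steady state. The principal technical obstacle is exactly this last Schauder--Tychonoff step on the unbounded domain: since Arzel\`a--Ascoli gives only local compactness, the fixed-point argument must be run in the compact-open topology, and I must verify that the nonlocal term $\chi\nabla\cdot(u\nabla v)$ does not destroy continuity of $P$ under locally uniform convergence of uniformly bounded data, which relies crucially on the uniform $C^{1,\nu}_{\rm unif}$-control of $v$ furnished by Theorem \ref{global-existence-tm}(iii).
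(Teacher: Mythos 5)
Your overall architecture (limits from the distant past for existence, re-initialisation for the bounds, Schauder--Tychonoff on the period map for (iii), $T_n\downarrow 0$ for (iv)) parallels the paper, and several pieces are sound: the existence construction works because the bounds in Theorem \ref{Main-thm1}(i) are uniform in $t_0$, and the lower bound in (i) does follow from \eqref{asymptotic-lower-bound} since the conclusion only concerns $\sup_{(x,t)}u^+$. But there are concrete gaps. For the upper bound in (i): \eqref{u-upper-bound-eq3} is a $\limsup$ statement for a \emph{fixed} datum, and by uniqueness the re-initialised solutions $u(\cdot,\cdot;t_0-s,u^+(\cdot,t_0-s))$ are all the same entire solution $u^+$; so ``letting $s\to\infty$'' only yields $\limsup_{t\to+\infty}\|u^+(\cdot,t)\|_\infty\le a_{\sup}/(b_{\inf}-\chi\mu)$, not a bound valid at every $t\in\R$. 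You need the decay to be uniform over data with a common sup bound, i.e.\ the ODE comparison with a supersolution started at $u^+_{\sup}$ and elapsed time sent to infinity, which is exactly the paper's \eqref{b-eq3}--\eqref{b-e}. Part (ii) likewise rests on the unproved assertion that $T_\varepsilon(u_0)$ in Theorem \ref{Main-thm1}(ii) can be chosen to depend only on $u_{0\inf}$ and $\|u_0\|_\infty$; that is not part of the cited statement, and the paper instead gets \eqref{attracting-rect-eq0} from the two-sided estimates \eqref{b-e} plus algebra. In (iii), the invariance $P(\mathcal{K})\subseteq\mathcal{K}$ does not follow from the persistence constants of the single orbit $u_0\equiv c$: pointwise persistence along one orbit does not make the order interval $[m,M]$ forward invariant under the period map. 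The lower-bound invariance is precisely what Lemma \ref{main-new-lm2} supplies (the interval $[\delta_0^*(T),M^+]$), and it must be invoked; the rest of your Schauder--Tychonoff step (compact-open topology, continuity via the uniform $C^{1,\nu}_{\rm unif}$ bound on $v$, as in Lemma \ref{continuity with respect to open compact topology}) is fine.

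The serious missing idea is in (iv), which is stated under (H1) alone. The bounds you can carry over from (i) control $\sup_x u_n$ but give no $n$-uniform positive lower bound on $\inf_x u_n$ for the $T_n$-periodic solutions; the lower bound $\delta_0^*(T_n)$ coming from the fixed-point construction may degenerate as $T_n\to 0$. Consequently your $C^{2,1}_{\rm loc}$ limit could a priori have zero infimum, and ``satisfying the same uniform bounds'' does not deliver strict positivity of the limiting steady state. This is exactly where the paper invests most of its effort: Claim 1 establishes a uniform-in-$n$, uniform-in-translation lower bound for $\sup_{|x|<L}u_{0n}(\cdot+x_0)$ via the principal-eigenvalue/perturbation machinery \eqref{aux-eq1-1}--\eqref{au-eq1-5} (if a periodic solution were small on a large box, the perturbed Dirichlet problem would force unbounded growth, contradicting periodicity and boundedness), and then a translation-compactness argument combined with the comparison/strong maximum principle rules out $u^*_{\inf}=0$. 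Without an argument of this type, your proof of (iv) does not go through. (Your density argument for time-independence of the limit, using equicontinuity in $t$ from the local parabolic estimates, is acceptable and plays the role of the paper's Claim 2.)
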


\begin{rk}
\begin{description}
\item[(i)] Theorem \ref{existence-entire-sol} (i) provides  explicit  lower and upper bounds for the supremum of all positive entire solutions. This lower bound is in fact achieved in the case that the functions $a(x,t)$ and $b(x,t)$ are constant.
\item[(ii)] Theorem \ref{existence-entire-sol} (ii) shows that if {\bf (H2)} holds, then the explicit lower bound and upper bound for all positive entire solutions coincide with the lower bound and upper bound of the attraction region given by Theorem \ref{Main-thm1} (ii).
\end{description}

\end{rk}

\medskip

We have the following result on the uniqueness and stability of positive entire solutions of \eqref{P}.

\medskip

\begin{tm}[Uniqueness and stability of strictly positive entire solutions]
\label{Stability of positive space homogeneous steady state solution}
  There is  $\chi_0>0$ such that when $0\le \chi<\chi_0$, there is  $\alpha_{\chi}>0$ such that  \eqref{P} has a unique strictly positive entire solution
$(u_\chi^+(x,t),v_\chi^+(x,t))$ which is uniformly and exponentially stable with respect to strictly positive perturbations in the sense that for any
$u_0\in C_{\rm unif}^b(\R)$ with $u_{0\inf}>0$, there is $M>0$ such that
\begin{equation}\label{new-expo-decay-main-eq1}
\|u(\cdot,t+t_0;t_0,u_0)-u^+_\chi(\cdot,t+t_0)\|_{\infty}\leq Me^{-\alpha_{\chi} t},\ \forall t\geq 0,\,\,\forall\, t_0\in\R
\end{equation}
and
\begin{equation}\label{new-expo-decay-main-eq2}
\|v(\cdot,t+t_0;t_0,u_0)-v^+_\chi(\cdot,t+t_0)\|_{\infty}\leq \frac{\mu}{\lambda}Me^{-\alpha_{\chi} t},\ \forall t\geq 0,\,\,\forall\, t_0\in\R.,\,\,\forall\, t_0\in\R.
\end{equation}

Furthermore, if the logistic function $f(x,t,u)=(a(x,t)-b(x,t)u)u$ is either space homogeneous or is of form $f(x,t,u)=b(x,t)(\kappa -u)u$, $\kappa>0$, then $\chi_0$ can be taken to be $\chi_0=\frac{b_{\inf}}{2\mu}$,  and  $u_{\chi}^+(x,t)=u^+_{0}(t)$, $0<\chi<\chi_0$, is the only stable positive entire solution of the Fisher-KKP equation, \eqref{KPP-Fisher equation}.

\end{tm}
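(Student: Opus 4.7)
Existence of $(u^+_\chi, v^+_\chi)$ is already granted by Theorem~\ref{existence-entire-sol}, so the heart of the proof is the stability estimate \eqref{new-expo-decay-main-eq1}; the corresponding bound \eqref{new-expo-decay-main-eq2} for $v$ follows at once from $\|v_1 - v_2\|_\infty \le (\mu/\lambda)\|u_1 - u_2\|_\infty$, and uniqueness of the strictly positive entire solution follows from stability by a time-translation argument. The plan is to apply a pointwise maximum principle to $w = u_1 - u_2$, where $u_1, u_2$ are two strictly positive classical solutions. Writing $z = v_1 - v_2$, the elliptic identity $\Delta z - \lambda z + \mu w = 0$ yields $\|z\|_\infty \le (\mu/\lambda)\|w\|_\infty$ and, via Bessel-potential gradient estimates on $\R^N$, $\|\nabla z\|_\infty \le C(N,\lambda,\mu)\|w\|_\infty$; parabolic regularity (Theorem~\ref{global-existence-tm}(iii)) provides uniform bounds on $\|\nabla u_i(\cdot,t)\|_\infty$ via the $C^{1,\nu}$ control of $v_i$.

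\textbf{Special cases with explicit $\chi_0 = b_{\inf}/(2\mu)$.} In both special cases $(u^+_\chi, v^+_\chi)$ is space-homogeneous: the pair $(\kappa,\mu\kappa/\lambda)$ solves \eqref{P} for every $\chi$ when $f = b(\kappa-u)u$, and the unique positive bounded solution $u^+_0(t)$ of the Bernoulli ODE $\dot y = y(a(t)-b(t)y)$ paired with $(\mu/\lambda)u^+_0(t)$ plays the same role when $a,b$ depend only on $t$. Taking $u_2 = u^+_\chi$ kills $\nabla v_2$ and $\nabla u_2$; substituting $\Delta v = \lambda v - \mu u$ in the $w$-equation and evaluating at a spatial sup-point $x_0$ (so that $\nabla w(x_0)=0$, $\Delta w(x_0)\le 0$, $w(x_0)>0$) produces
\begin{equation*}
\partial_t w(x_0) \le u_1(x_0)\bigl[(\chi\mu - b(x_0,t))\,w(x_0) - \chi\lambda\, z(x_0)\bigr].
\end{equation*}
The elliptic bound $|z(x_0)| \le (\mu/\lambda)w(x_0)$ absorbs the $z$-term and gives $\partial_t w(x_0) \le u_1(x_0)(2\chi\mu - b_{\inf})\,w(x_0)$; combining with the symmetric estimate at the spatial infimum of $w$ and with the pointwise persistence lower bound $u_1 \ge m(u_0) > 0$ from Theorem~\ref{Main-thm1}(i) delivers $\frac{d^+}{dt}\|w(\cdot,t)\|_\infty \le -\alpha_\chi\|w(\cdot,t)\|_\infty$ with $\alpha_\chi = m(u_0)(b_{\inf}-2\chi\mu) > 0$ whenever $\chi < b_{\inf}/(2\mu)$. (In the time-dependent space-homogeneous case I first apply the change of variables $Y = (u_1 - u^+_0(t))/u^+_0(t)$; since $u^+_0$ is bounded below by persistence, the same computation goes through for $Y$.)

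\textbf{General case.} Without space-homogeneity of $u^+_\chi$, the cross terms $\chi\nabla u^+_\chi\cdot\nabla z$ and $\chi\lambda u^+_\chi z$ that arise after decomposing $u_1\nabla v_1 - u_2\nabla v_2 = u_1\nabla z + w\nabla v_2$ and substituting $\Delta v_i = \lambda v_i - \mu u_i$ do not vanish; I absorb them as $O(\chi)\|w(\cdot,t)\|_\infty$ using the gradient bounds recorded in the first paragraph. Together with the main-coefficient estimate $a - (b-\chi\mu)(u_1+u_2) - \chi\lambda v_2 \le a_{\sup} - 2(b_{\inf}-\chi\mu)\underline{M}$, valid on the invariant rectangle $\mathbb{I}_{inv}$ of Theorem~\ref{Main-thm1}(ii) (entered in finite time, provided \textbf{(H2)} holds, a condition I enforce by restricting $\chi_0$), the drift of $\|w(\cdot,t)\|_\infty$ is bounded by $a_{\sup} - 2(b_{\inf}-\chi\mu)\underline{M} + C(a,b,\lambda,\mu,N)\chi$; choosing $\chi_0$ small enough that this quantity is strictly negative gives the exponential decay rate $\alpha_\chi$. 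The hard part is quantifying $C$, because the parabolic gradient estimate on $u_i$ is itself $\chi$-dependent through the chemotactic drift and must be controlled uniformly for $\chi \in [0, \chi_0]$; this is where the smallness of $\chi_0$ is effectively consumed.

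\textbf{Uniqueness and the Fisher-KPP identification.} Stability immediately yields uniqueness: if $(u^+_{\chi,1},v^+_{\chi,1})$ and $(u^+_{\chi,2},v^+_{\chi,2})$ are two strictly positive entire solutions, then the estimate above applied on $[t_0, t]$ with $t_0 \to -\infty$ forces $u^+_{\chi,1} \equiv u^+_{\chi,2}$. In the two special cases, since the space-homogeneous functions $u^+_0(t)$ (respectively $\kappa$) are strictly positive entire solutions of both \eqref{P} for every $\chi \in [0, b_{\inf}/(2\mu))$ and of the Fisher-KPP equation \eqref{KPP-Fisher equation} (the $\chi=0$ specialization), uniqueness forces $u^+_\chi \equiv u^+_0(t)$ on the whole range and, applied at $\chi=0$, identifies $u^+_0(t)$ as the only stable strictly positive entire solution of \eqref{KPP-Fisher equation}.
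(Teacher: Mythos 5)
Your treatment of the two special cases is essentially workable and close in spirit to the paper's: when $u^+_\chi$ is spatially homogeneous the sup-point (or ODE-comparison) argument on the difference, with $\|v_1-v_2\|_\infty\le(\mu/\lambda)\|u_1-u_2\|_\infty$, does produce the factor $b_{\inf}-2\chi\mu$ and hence $\chi_0=\frac{b_{\inf}}{2\mu}$ (the paper reaches the same conclusion through \eqref{f-1'} and comparison with spatially constant ODE solutions, which also sidesteps the fact that the supremum need not be attained on $\R^N$). Two small repairs are still needed there: your rate $\alpha_\chi=m(u_0)(b_{\inf}-2\chi\mu)$ depends on $u_0$, whereas the theorem requires $\alpha_\chi$ independent of $u_0$ (fixable by a bootstrap once $\|w\|_\infty$ is small, pushing the $u_0$-dependence into $M$ as the paper does), and the gradient bounds on $u_i$ you invoke do not follow from Theorem \ref{global-existence-tm}(iii), which controls $v$, not $u$; the paper proves the needed bound on $\|\nabla u^+_\chi\|_\infty$ separately (Lemma \ref{bound for gradient of positive entire solution}), and this bound is exactly what enters the definition \eqref{chi_0 def} of $\chi_0$.

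The genuine gap is in your general case. Your drift bound for $\|w\|_\infty$ is $a_{\sup}-2(b_{\inf}-\chi\mu)\underline{M}+C\chi$, and you propose to make it negative by shrinking $\chi_0$. But as $\chi\to0^+$ one has $\underline{M}\to a_{\inf}/b_{\sup}$, so the bound tends to $a_{\sup}-2b_{\inf}a_{\inf}/b_{\sup}$, which is nonnegative whenever $a_{\sup}b_{\sup}\ge 2a_{\inf}b_{\inf}$ --- a situation fully admissible under {\bf (H1)} (e.g.\ $b$ constant and $a_{\sup}\ge 2a_{\inf}$). The offending term survives at $\chi=0$, so no smallness of $\chi$ can rescue it: for the heterogeneous Fisher--KPP part alone, the difference of two positive solutions has zeroth-order coefficient $a-b(u_1+u_2)$, which is not sign-definite, and a sup-norm contraction for $w$ simply is not available by this pointwise argument. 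This is precisely why the paper does not work with the difference in the general case: it works with the ratio $U=u/u^+_\chi$ (equation \eqref{f-1}), whose reaction term $(b-\chi\mu)u^+_\chi U(1-U)$ contracts toward $U\equiv1$ regardless of the sign of $a-2bu$, runs the iterative ``eventual comparison'' scheme of Theorem \ref{New-asymp-lem 1} to make $\|U-1\|_\infty$ small (this is where the condition $\frac{\chi\mu C_1(\chi)u^+_{\chi\sup}}{(b_{\inf}-\chi\mu)u^+_{\chi\inf}}<1$, i.e.\ $\chi<\chi_0$ with $\chi_0$ from \eqref{chi_0 def}, is used), and only then converts smallness into exponential decay via the evolution operator $\Phi_\chi$ and Gronwall. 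Without replacing your difference-based step by some such ratio (or part-metric) argument, your proof establishes the general statement only under the extra hypothesis $a_{\sup}b_{\sup}<2a_{\inf}b_{\inf}$, which the theorem does not assume.
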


\begin{rk}
\begin{description}
\item[(i)] If we suppose that the logistic function is space homogeneous (resp. the function $\R^N\times\R\ni(x,t)\mapsto \frac{a(x,t)}{b(x,t)}$ is constant), Theorem \ref{Stability of positive space homogeneous steady state solution} establishes the stability  of the unique space homogeneous (resp. space-time homogeneous) strictly positive entire solution of \eqref{P} when the chemotaxis sensitivity satisfies $0<\chi<\frac{b_{\inf}}{2\mu}$.  Furthermore, this results goes beyond the stability of the constant equilibrium given by Theorem \ref{Main-thm1} (ii) when the logistic source is constant,  and show that all positive solutions of \eqref{P} converge exponentially to $(\frac{a}{b},\frac{\mu a}{\lambda b})$ when $0<\chi<\frac{b_{\inf}}{2\mu}$. It should be noted that the hypothesis $0<\chi<\frac{b_{\inf}}{2\mu}$ is weaker than hypothesis {\bf (H2)}.
\item[(ii)] It is worth mentioning that the techniques developed to prove Theorem \ref{Stability of positive space homogeneous steady state solution} can be adopted to study the uniqueness and stability of positive entire solution of \eqref{P}, when \eqref{P} is studied on bounded domains with Neumann boundary conditions. Hence the same result is true in this later case. In particular, the uniqueness and stability of the unique constant equilibrium  solution of \eqref{P}, when studied on bounded domains with  Neumann boundary conditions and $\frac{a(x,t)}{b(x,t)}\equiv $ constant, under the hypothesis $0<\chi<\frac{b_{\inf}}{2\mu}$ improves \cite[Theorem 1.4 (2)]{ITBWS16} in this case.
\end{description}

 \item[(iii)] Let $\chi_0$ be given by Theorem \ref{Stability of positive space homogeneous steady state solution}. One can prove that for every,
 $$ 0<\chi<\min\Big\{\chi_0,\frac{b_{\inf}}{\mu}\Big(1+\frac{\big(1+\sqrt{1+\frac{N a_{\inf}}{4\lambda}}\big)a_{\sup}}{2 a_{\inf}}\Big)^{-1} \Big\},$$ it holds that
  \begin{equation*}
\lim_{t\to\infty}\sup_{|x|\leq ct}|u(x,t+t_0;t_0,u_0)-u^+_{\chi}(x,t)|=0, \quad \forall 0\leq c< c_{-}^{*}(a,b,\chi,\lambda,\mu), \forall\, t_)\in\R
\end{equation*}
whenever $u_0\in C^{b}_{\rm unif}(\R^N)$ is nonnegative with nonempty compact support $supp(u_0)$, where the constant  $c_{-}^{*}(a,b,\chi,\lambda,\mu)$ is given by Theorem \ref{spreading-properties}

\item[(iv)] Incorporating space and/or time  dependence on the logistic source $f(x,t,u)=u(a(x,t)-b(x,t)u)$ adds new challenges in the study of the dynamics of solutions of \eqref{P}. In particular, the existence of strictly positive entire solutions of \eqref{P}
     is very nontrivial to prove when $a$ and $b$ depend on both $t$ and $x$ (note that if $a$ and $b$ are constants, then it follows directly that $(u,v)=(\frac{a}{b}, \frac{\mu}{\lambda}\frac{a}{b})$ is a strictly positive entire solution). Also strictly positive entire solutions
     may depend on the chemotaxis sensibility coefficient $\chi>0$ when the logistic source function depends on time and space. This dependence makes the study of the stability of positive entire solutions much  more difficult than the case that $a$ and $b$ are constants and requires completely new ideas.   Our first step in handling this problem is to first derive some a priori estimates on the space $C^{1}-$norm of positive entire solutions which leads to the definition of the constant $\chi_0$ in Theorem \ref{Stability of positive space homogeneous steady state solution}. Next for $\chi<\chi_0$, by developing a new iterative techniques, a kind of "eventual comparison principle" at each step, we show that the ratio of the solution of \eqref{P} with a strictly positive initial function and a strictly positive entire solution can not be really far away from the constant $ 1$ uniformly in the space variable as the time variable becomes arbitrarily large. Based on this result, in the third step we completes the proof of the result, which also requires new ideas. It is worth mentioning that the techniques developed towards the proof of
     Theorem \ref{Stability of positive space homogeneous steady state solution} can be applied for more general problems.

\end{rk}

\medskip

 We conclude with the following results on the disturbances to Fisher-KPP dynamics caused by weak chemotactic effects. In this direction we have the following result.

\medskip

\begin{tm}\label{perturbation effect}
Assume (H1). Let $(u^+_{\chi}(x,t),v^+_{\chi}(x,t))$ denotes strictly  positive entire solution of \eqref{P} with $0\leq \chi <\frac{b_{\inf}}{\mu}$. Then  it holds that
\begin{equation}\label{small chemotatic pert eq1}
\sup_{t\in\R}\|u^+_\chi(\cdot,t)-u^+_0(\cdot,t)\|_{\infty}\leq \frac{\chi\mu a_{\sup}u^+_{0\sup}K}{(b_{\inf}-\chi\mu)b_{\inf}u^+_{0\inf}},
\end{equation}
where $K:=\left( 2 +\frac{\sqrt{N}}{\sqrt{\lambda}}\sup_{t\in\R}\|\nabla\ln(u^+_0(\cdot,t))\|_{\infty}\right)$. Furthermore, we have that
\begin{equation}\label{small chemotatic pert eq2}
\sup_{0<\chi\leq\chi_1}\sup_{t_0\in\R}\frac{1}{\chi}\Big(\sup_{t\geq 0}\|u_{\chi}(\cdot,t+t_0;t_0,u_0)-u_0(\cdot,t+t_0;t_0,u_0)\|_{\infty} \Big)<\infty,
\end{equation}
for every $0<\chi_1<\frac{b_{\inf}}{\mu}$ and every $u_0\in C^{b}_{\rm unif}(\R^N)$ with $u_{0\inf}>0$.
\end{tm}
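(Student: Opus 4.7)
The plan is to work with the ratio $\psi:=u^+_\chi/u^+_0$ rather than the difference, since the quotient $u^+_{0\sup}/u^+_{0\inf}$ appearing in \eqref{small chemotatic pert eq1} suggests a maximum-principle argument on $\psi$. This ratio is well-defined, strictly positive, and bounded because $u^+_0$ is strictly positive entire (so $u^+_{0\inf}>0$) and both entire solutions are bounded by Theorem~\ref{existence-entire-sol}. Dividing the equation for $u^+_\chi=\psi u^+_0$ by $u^+_0$ and using $\partial_t u^+_0=\Delta u^+_0+u^+_0(a-bu^+_0)$ together with the identity $\Delta v^+_\chi=\lambda v^+_\chi-\mu u^+_\chi$ yields a parabolic equation for $\psi$ whose zero-order term is $-bu^+_0\psi(\psi-1)$ plus chemotactic corrections of order $\chi$, namely the terms $-\chi\nabla\psi\cdot\nabla v^+_\chi$, $-\chi\psi\nabla(\ln u^+_0)\cdot\nabla v^+_\chi$, $-\chi\lambda\psi v^+_\chi$ and $+\chi\mu u^+_0\psi^2$.

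Because $\psi$ is entire on $\R^N\times\R$, its supremum need not be attained, so I would use a standard shift-and-compactness argument: pick $(x_n,t_n)$ with $\psi(x_n,t_n)\to\sup\psi$, translate the solutions and coefficients $a,b$ by $(x_n,t_n)$, and extract a $C^{2+\nu,1+\nu/2}_{\rm loc}$ subsequential limit $(\tilde u^+_\chi,\tilde v^+_\chi,\tilde u^+_0,\tilde v^+_0,\tilde\psi)$ solving the analogous system with shifted coefficients $\tilde a,\tilde b$ still satisfying \textbf{(H)}; the uniform regularity needed follows from Theorem~\ref{global-existence-tm}(iii) and parabolic/elliptic Schauder estimates. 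The limit $\tilde\psi$ attains $\sup\psi$ at the interior point $(0,0)$, where $\nabla\tilde\psi=0$, $\Delta\tilde\psi\leq 0$ and $\partial_t\tilde\psi=0$. Substituting into the $\psi$ equation at $(0,0)$ and using $\tilde u^+_0\tilde\psi=\tilde u^+_\chi$, the bounds $\|\tilde u^+_\chi\|_\infty\leq a_{\sup}/(b_{\inf}-\chi\mu)$, $\|\tilde v^+_\chi\|_\infty\leq(\mu/\lambda)\|\tilde u^+_\chi\|_\infty$, the Bessel-potential gradient estimate $\|\nabla\tilde v^+_\chi\|_\infty\leq\frac{\mu\sqrt N}{\sqrt\lambda}\|\tilde u^+_\chi\|_\infty$, and the pointwise lower bound $\tilde u^+_0(0,0)\geq u^+_{0\inf}$, yields $\sup\psi-1\leq\frac{\chi\mu a_{\sup}K}{b_{\inf}(b_{\inf}-\chi\mu)u^+_{0\inf}}$. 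Converting via $u^+_\chi-u^+_0=u^+_0(\psi-1)\leq u^+_{0\sup}(\sup\psi-1)$ gives the upper side of \eqref{small chemotatic pert eq1}, and a symmetric argument at a shifted infimum of $\psi$ supplies the matching lower bound.

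For \eqref{small chemotatic pert eq2}, I would split the time interval at $t=t_0+1$. Setting $w:=u_\chi(\cdot,\cdot;t_0,u_0)-u_0(\cdot,\cdot;t_0,u_0)$ so that $w(\cdot,t_0)=0$, the difference equation has reaction $w(a-b(u_\chi+u_0))$ with coefficient bounded via Theorem~\ref{global-existence-tm} and chemotactic forcing $-\chi\nabla\cdot(u_\chi\nabla v_\chi)$ whose $L^\infty$ norm is controlled by $C(u_0)\chi(1+(t-t_0)^{-1/2})$ using parabolic smoothing for $u_\chi$; Gronwall on $[t_0,t_0+1]$ then yields $\|w(\cdot,t_0+1)\|_\infty\leq C(u_0,\chi_1)\chi$. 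On $[t_0+1,\infty)$, I would repeat the ratio argument of part (1) for $\psi:=u_\chi/u_0(\cdot,\cdot;t_0,u_0)$, applied at the time-varying supremum $g(t):=\sup_x\psi(x,t)$ via a shift argument at each fixed $t$. The key uniform ingredients are (a) a uniform-in-$\chi\in(0,\chi_1]$ lower bound $u_\chi(\cdot,t+t_0;t_0,u_0),u_0(\cdot,t+t_0;t_0,u_0)\geq m(u_0,\chi_1)>0$ for $t\geq 0$, obtained by comparison with the ODE $\dot y=y(a_{\inf}-\chi_1\mu\bar M-b_{\sup}y)$ with $\bar M$ the uniform upper bound from Theorem~\ref{global-existence-tm}, and (b) uniform $C^1$ control on $u_0(\cdot,t;t_0,u_0)$ for $t\geq t_0+1$. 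These give $(\ln g)'(t)\leq C\chi-b_{\inf}m(g(t)-1)$, and combined with $|g(t_0+1)-1|\leq C'\chi$ from the short-time estimate, Gronwall produces $|g(t)-1|\leq C''\chi$ uniformly in $t\geq t_0+1$; the symmetric inequality for $\inf_x\psi$ then closes the argument.

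The main obstacle is the rigorous maximum-principle argument on entire solutions and on the time-dependent extremum $g(t)$ via shift-and-compactness, which must track that all the quantitative bounds (in particular $u^+_{0\inf}$, $u^+_{0\sup}$, $\|\nabla\ln u^+_0\|_\infty$ for (1), and the pointwise lower bound and $C^1$ control of $u_0(\cdot,t;t_0,u_0)$ for (2)) persist under the limit. A secondary but important obstacle is verifying that the pointwise persistence constant from Theorem~\ref{Main-thm1}(i) can be chosen uniform in $\chi\in(0,\chi_1]\subset(0,b_{\inf}/\mu)$, since without such uniformity the constant in \eqref{small chemotatic pert eq2} would depend on $\chi$.
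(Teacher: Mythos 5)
Your overall strategy is the same as the paper's: both proofs work with the quotient $U=u^+_\chi/u^+_0$ (respectively $u_\chi/u_0$ for \eqref{small chemotatic pert eq2}), note that it solves a parabolic equation whose zero-order part is the logistic damping $b\,u^+_0U(1-U)$ perturbed by chemotactic terms bounded by $\chi\mu K a_{\sup}/(b_{\inf}-\chi\mu)$, and, for \eqref{small chemotatic pert eq2}, first run a Duhamel--Gronwall estimate on a unit time interval to get an $O(\chi)$ bound at time $t_0+1$ before switching to the quotient. Where you differ is in how the $L^\infty$ bound is extracted from the differential inequality: the paper compares $U$ with explicit spatially homogeneous ODE super-/subsolutions (letting the initial time recede to $-\infty$ for the entire solutions, cf.\ \eqref{h-18}--\eqref{h-22} and \eqref{h-25}), whereas you evaluate the equation at a supremum point produced by shift-and-compactness (part (1)) and differentiate the moving supremum $g(t)=\sup_x\psi(x,t)$ (part (2)). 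Your route is workable and in part (1) reproduces exactly the paper's constant, but it is heavier: it needs uniform interior Schauder estimates to pass to the translated limit, and in part (2) the inequality $(\ln g)'\leq C\chi-b_{\inf}m(g-1)$ for a supremum that is generally not attained and only Lipschitz in $t$ requires a careful Dini-derivative argument that the parabolic comparison principle simply avoids.

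One ingredient you invoke is incorrect as justified, although fortunately it is also unnecessary. The comparison ODE $\dot y=y(a_{\inf}-\chi_1\mu \bar M-b_{\sup}y)$ does not deliver a time-uniform positive lower bound on $u_\chi$: with $\bar M=\max\{u_{0\sup},\,a_{\sup}/(b_{\inf}-\chi_1\mu)\}$ the quantity $a_{\inf}-\chi_1\mu\bar M$ may well be negative, in which case $y(t)\to 0$ and the comparison yields nothing; a time-uniform lower bound for $u_\chi$ under {\bf (H1)} alone is precisely the nontrivial pointwise persistence of Theorem \ref{Main-thm1}(i), whose constant $m(u_0)$ is not claimed to be uniform in $\chi$. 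The saving grace is that the quotient argument never needs a lower bound on $u_\chi$, nor the uniform-in-$\chi$ persistence you list as a remaining obstacle: the damping coefficient in the $U$-equation is $b\,u_0$, and the perturbation term $\chi U\big(\Delta v_\chi+\nabla\ln u_0\cdot\nabla v_\chi\big)$ requires only an upper bound on $u_\chi$ (Theorem \ref{global-existence-tm}(i)) together with lower and gradient bounds on the chemotaxis-free Fisher--KPP solution $u_0(\cdot,\cdot;t_0,u_0)$, which follow from the logistic comparison $\dot y=y(a_{\inf}-b_{\sup}y)$ and standard parabolic smoothing; this is exactly how the paper introduces its constants $C_3$ and $C_4$. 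If you reroute your ingredient (a) so that the lower bound is asked of $u_0$ only, your argument closes.
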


\begin{rk} It follows from Theorem \ref{perturbation effect} that
$$
\lim_{\chi\to 0^+}\|u^+_\chi(\cdot,t)-u^+_0(\cdot,t)\|_{\infty}=0
$$
uniformly in $t\in\R$. Furthermore, for every   $u_0\in C^{b}_{\rm unif}(\R^N)$, with $u_{0\inf}>0$, it holds that
$$
\lim_{\chi\to0^+}\|u_{\chi}(\cdot,t+t_0;t_0,u_0)-u_{0}(\cdot,t+t_0;t_0,u_0) \|_{\infty}=0,
$$
uniformly in $t\geq 0$ and $t_0\in\R$.
\end{rk}

The rest of the paper is organized as follows. In section 2, we present some preliminary lemmas.
In Section 3, we study the existence of strictly positive entire solutions and prove Theorem \ref{existence-entire-sol}. We investigate in section 4 the uniqueness and stability of  strictly positive entire solution and prove Theorem \ref{Stability of positive space homogeneous steady state solution}. The proof of Theorem \ref{perturbation effect} is also given in section 4.

\medskip


\section{Preliminary lemmas}

\begin{lem} \label{main-lem1}
 Suppose that {\bf (H1)} holds. Then for every $T>0$, $t_0\in\R,$ and  for every nonnegative initial function $ u_0\in C^{b}_{\rm unif}(\R^N)$, there holds that
\begin{equation}\label{eq1-main-lem1}
\inf_{x\in\R^N}u(x,t+t_0;t_0,u_0)\geq u_{0\inf}e^{t(a_{\inf}-b_{sup}\|u_0\|_{\infty}e^{T a_{\sup}})}, \quad \forall\,\, 0\leq t\leq T.
\end{equation}
In particular for every $T>0$ and  for every nonnegative initial $u_0\in C^b_{\rm unif}(\R^N)$ satisfying $\|u_0\|_{\infty}\leq M_{T}:=\frac{a_{\inf}e^{-a_{\sup}T}}{b_{\sup}}$, we have that
\begin{equation}\label{eq2-main-lem1}
\inf_{x\in\R^N}u(x,t+t_0;t_0,u_0)\geq \inf_{x\in\R^N}u_{0}(x), \quad \forall \,\, 0\leq t\leq T,\ \forall \,\, t_{0}\in\R.
\end{equation}
\end{lem}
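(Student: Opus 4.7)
The plan is to rewrite the $u$-equation as a linear parabolic inequality with bounded coefficients and then dominate $u$ from below by a spatially constant subsolution. First, expanding $\nabla\cdot(u\nabla v)=\nabla u\cdot\nabla v+u\Delta v$ and eliminating $\Delta v$ via the elliptic equation $\Delta v=\lambda v-\mu u$, the first equation of \eqref{P} becomes
$$
\partial_t u=\Delta u-\chi\nabla v\cdot\nabla u+u\bigl[a(x,t)-(b(x,t)-\chi\mu)u-\chi\lambda v\bigr].
$$
Under {\bf (H1)} we have $b-\chi\mu>0$, so both $(b-\chi\mu)u$ and $\chi\lambda v$ in the bracket are nonnegative, and the task is reduced to bounding them from above on $[t_0,t_0+T]$.

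For the first of these, \eqref{u-upper-bound-eq1} in Theorem \ref{global-existence-tm} yields $\|u(\cdot,t+t_0)\|_{\infty}\le\|u_0\|_{\infty}e^{Ta_{\sup}}$ for $0\le t\le T$. For $v$, I would use the fact that $V(x):=\frac{\mu}{\lambda}\|u(\cdot,t+t_0)\|_{\infty}$ and $0$ are respectively a bounded super- and subsolution of the elliptic equation $\Delta v-\lambda v+\mu u=0$, so the maximum principle on $\R^N$ in the bounded class gives $0\le v\le\frac{\mu}{\lambda}\|u(\cdot,t+t_0)\|_{\infty}\le\frac{\mu}{\lambda}\|u_0\|_{\infty}e^{Ta_{\sup}}$, and hence $\chi\lambda v\le\chi\mu\|u_0\|_{\infty}e^{Ta_{\sup}}$. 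Combining the two bounds,
$$
u\bigl[a-(b-\chi\mu)u-\chi\lambda v\bigr]\ \ge\ c(T)\,u,\qquad c(T):=a_{\inf}-b_{\sup}\|u_0\|_{\infty}e^{Ta_{\sup}},
$$
so $u(\cdot,\cdot+t_0)$ is a bounded supersolution on $\R^N\times[0,T]$ of the linear parabolic operator $\mathcal{L}U:=\partial_tU-\Delta U+\chi\nabla v\cdot\nabla U-c(T)U$, whose drift coefficient is bounded thanks to \eqref{uniform-holder-bound-for-v}.

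Next, the spatially constant function $\underline{u}(t):=u_{0\inf}e^{c(T)t}$ satisfies $\mathcal{L}\underline{u}=0$ together with $\underline{u}(0)=u_{0\inf}\le u_0(x)=u(x,t_0)$. Applying the comparison principle in the class of bounded solutions for $\mathcal{L}$ on $\R^N\times[0,T]$ yields $u(x,t+t_0)\ge u_{0\inf}e^{c(T)t}$, which is exactly \eqref{eq1-main-lem1}. For the second statement, $\|u_0\|_{\infty}\le M_T=a_{\inf}e^{-a_{\sup}T}/b_{\sup}$ forces $c(T)\ge 0$, so $e^{c(T)t}\ge 1$ on $[0,T]$ and \eqref{eq2-main-lem1} follows immediately from \eqref{eq1-main-lem1}.

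The one nonroutine point is the comparison step, since $\mathcal{L}$ is posed on the unbounded domain $\R^N$ and carries the drift $\chi\nabla v$; one must invoke a comparison principle for bounded sub/supersolutions of linear parabolic equations with bounded (indeed $C^{\nu}_{\rm unif}$) coefficients, rather than an elementary one on a compact domain. Everything else reduces to the elementary algebraic rearrangement above together with a priori bounds already recorded in Theorem \ref{global-existence-tm}.
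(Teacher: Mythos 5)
Your argument is correct: the rewriting of the $u$-equation using $\Delta v=\lambda v-\mu u$, the bounds $\|u(\cdot,t+t_0)\|_\infty\le\|u_0\|_\infty e^{a_{\sup}T}$ and $\lambda v\le\mu\|u\|_\infty$, and the comparison of the resulting bounded supersolution with the spatially constant function $u_{0\inf}e^{c(T)t}$ (valid since the drift $\chi\nabla v$ is bounded) give exactly \eqref{eq1-main-lem1}, and \eqref{eq2-main-lem1} follows since $\|u_0\|_\infty\le M_T$ makes $c(T)\ge 0$. The paper itself only cites the companion paper for this lemma, and your proof is precisely the standard comparison argument that proof relies on, so there is nothing to add.
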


\begin{proof}
See \cite[Lemma 3.1]{SaSh_6_I}.
\end{proof}

\begin{lem}\label{continuity with respect to open compact topology}
Assume that (H1) holds. Let $u_0\in C^{b}_{\rm unif}(\R^N)$, $\{u_{0n}\}_{n\geq 1}$ be a sequence of nonnegative functions in $C^{b}_{\rm unif}(\R^N)$, and let $\{t_{0n}\}_{n\geq 1}$ be a sequence of real numbers. Suppose that $0\leq u_{0n}(x)\leq M:=\frac{a_{\sup}}{b_{\inf}-\chi\mu}$ and $\{u_{0n}\}_{n\geq 1}$ converges locally uniformly  to $u_{0}$. Then there exist a subsequence {$\{t_{0n'}\}$ of $\{t_{0n}\}$}, functions $a^*(x,t), b^*(x,t)$  such that $(a(x,t+t_{0n'}),b(x,t+t_{0n'}))\to (a^*(x,t),b^*(x,t))$ locally uniformly as $n'\to\infty$, and  $u(x,t+t_{0n'};t_{0n'},u_{0n'})\to u^*(x,t;0,u_0)$ locally uniformly in $(x,t)$ as $n'\to\infty$, where $(u^*(x,t;0,u_0),v^*(x,t;0,u_0)$ is the classical solution of
\begin{equation*}
\begin{cases}
u_{t}(x,t)=\Delta u(x,t)-\chi\nabla\cdot (u(x,t)\nabla v(x,t))+(a^{*}(x,t)-b^{*}(x,t)u(x,t))u(x,t), \quad x\in\R^N\cr
0=(\Delta-\lambda I)v^*(x,t)+\mu u^*(x,t), \quad x\in\R^N\cr
u^*(x,0)=u_{0}(x), \quad x\in\R^N.
\end{cases}
\end{equation*}
\end{lem}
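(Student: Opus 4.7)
The plan is a standard compactness-and-passage-to-the-limit argument in the compact-open topology, combining the uniform bounds from Theorem \ref{global-existence-tm} with interior parabolic Schauder estimates.

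First I would pass to the limit in the coefficients. By hypothesis (H), the families $\{a(\cdot,\cdot+t_{0n})\}_n$ and $\{b(\cdot,\cdot+t_{0n})\}_n$ are uniformly bounded and uniformly H\"older continuous with exponent $\nu$, so Arzel\`a-Ascoli produces a subsequence (which I re-index as $n'$) and functions $a^*,b^*$, enjoying the same bounds and H\"older regularity, such that $a(\cdot,\cdot+t_{0n'})\to a^*$ and $b(\cdot,\cdot+t_{0n'})\to b^*$ locally uniformly in $(x,t)\in\R^N\times\R$.

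Next I would derive uniform interior estimates for $u_{n'}(x,t):=u(x,t+t_{0n'};t_{0n'},u_{0n'})$ and $v_{n'}(x,t):=v(x,t+t_{0n'};t_{0n'},u_{0n'})$. Since $\|u_{0n'}\|_\infty\le M=\frac{a_{\sup}}{b_{\inf}-\chi\mu}$, Theorem \ref{global-existence-tm}(i) yields $0\le u_{n'}\le M$ uniformly, and part (iii) gives $\|v_{n'}(\cdot,t)\|_{C^{1,\nu}_{\rm unif}}\le K_1$ uniformly in $n'$ and $t\ge 0$. Using $\Delta v_{n'}=\lambda v_{n'}-\mu u_{n'}$ to rewrite the $u$-equation as
\begin{equation*}
\partial_t u_{n'}=\Delta u_{n'}-\chi\nabla v_{n'}\cdot\nabla u_{n'}+u_{n'}\bigl(a(x,t+t_{0n'})+\chi\mu u_{n'}-\chi\lambda v_{n'}-b(x,t+t_{0n'})u_{n'}\bigr),
\end{equation*}
I obtain a linear parabolic equation with uniformly bounded, locally H\"older continuous coefficients. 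Interior parabolic Schauder estimates then yield uniform $C^{2+\nu,1+\nu/2}$ bounds for $u_{n'}$ on compact subsets of $\R^N\times(0,\infty)$. A second Arzel\`a-Ascoli extraction provides a further subsequence along which $u_{n'}$ converges, together with first and second spatial derivatives and time derivative, locally uniformly on $\R^N\times(0,\infty)$ to some $u^*$; feeding this into the elliptic equation and using standard elliptic estimates produces corresponding convergence of $v_{n'}$ to some $v^*$ satisfying $0=\Delta v^*-\lambda v^*+\mu u^*$. Passing to the limit in the $u$-equation shows $(u^*,v^*)$ solves the announced limit system on $\R^N\times(0,\infty)$.

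The main obstacle is the last step, namely identifying the initial condition $u^*(\cdot,0)=u_0$ in the locally uniform topology, since the Schauder bounds above degenerate at $t=0$ and $u_{0n'}\to u_0$ is assumed only locally uniformly. My plan is to establish a uniform-in-$n'$ modulus-of-continuity estimate $|u_{n'}(x,t)-u_{0n'}(x)|\le \omega(t)$ on each compact subset of $\R^N$ for $t\in[0,\delta]$, with $\omega(t)\to 0$ as $t\to 0^+$. This can be obtained from the mild (Duhamel) representation of the $u$-equation using the uniform $L^\infty$ bound on $u_{n'}$, the uniform $C^{1,\nu}$ bound on $v_{n'}$, and standard heat-semigroup estimates; alternatively, one may construct local parabolic barriers built from Lemma \ref{main-lem1} and its corresponding upper analogue. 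Combined with the locally uniform convergence $u_{0n'}\to u_0$, this gives $u^*(\cdot,t)\to u_0$ locally uniformly as $t\to 0^+$, so that $u^*$ coincides with the (unique) classical solution $u^*(\cdot,\cdot;0,u_0)$ of the limit system. A final subsequence argument promotes the convergence to hold locally uniformly on $\R^N\times[0,\infty)$, completing the proof.
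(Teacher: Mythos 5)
Your outline is correct and is essentially the argument this paper relies on: the paper itself gives no proof here but defers to \cite[Lemma 3.2]{SaSh_6_I}, whose route is the same combination of the uniform bounds of Theorem \ref{global-existence-tm}, Arzel\`a--Ascoli for the shifted coefficients, interior parabolic estimates for a subsequential limit, and identification of the limit via uniqueness of the limiting problem. Your treatment of the delicate point at $t=0$ (a uniform-in-$n'$ modulus of continuity from the Duhamel formula, using the heat-kernel smoothing, the uniform $L^\infty$ and $C^{1,\nu}$ bounds, and the local equicontinuity of $\{u_{0n}\}$ inherited from locally uniform convergence) is exactly what is needed and closes the only nontrivial gap.
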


\begin{proof}
See \cite[Lemma 3.2]{SaSh_6_I}.
\end{proof}

\begin{lem}
\label{main-new-lm1}
Assume that (H1) holds. For every $M>0$, $\varepsilon>0$, and $T>0$,   there exist $L_0=L(M,T,\varepsilon)\gg 1$ and $\delta_0=\delta_0(M,\varepsilon)$ such that for every initial function $u_0\in C^{b}_{\rm unif}(\R^N)$ with $0\leq u_0\leq M$ and   every $L\geq L_0$,
\begin{equation}\label{Local-uniform-boun-for-u}
  u(x,t+t_0;t_0,u_0)\leq \varepsilon,\quad \forall \ 0\leq t\leq T,\ t_0\in\R, \ \forall \,\, |x|_{\infty}<2L
\end{equation}
whenever $ 0\leq u_0(x)\leq \delta_0$ for   $|x|_{\infty}<3L$.
\end{lem}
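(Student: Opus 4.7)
The plan is to dominate $u$ by the solution of a linear parabolic equation with bounded drift, for which a Gaussian tail bound delivers ``almost finite speed of propagation'' on the interval $[t_0, t_0 + T]$.

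First I would pin down a priori bounds uniform in $t_0$. By \eqref{u-upper-bound-eq1}, $\|u(\cdot, s + t_0; t_0, u_0)\|_\infty \le M e^{a_{\sup} T}$ for $0 \le s \le T$, and then \eqref{uniform-holder-bound-for-v} supplies a constant $K_1 = K_1(\nu, M, a, b, T)$ with $\|\nabla v(\cdot, s + t_0; t_0, u_0)\|_\infty \le K_1$ for $0 \le s \le T$ and all $t_0 \in \R$. Using $\Delta v = \lambda v - \mu u$, the first equation can be rewritten as
\[
\partial_s u = \Delta u - \chi \nabla v \cdot \nabla u + u\bigl[a(x, s) - \chi\lambda v + (\chi\mu - b(x, s))u\bigr],
\]
and under \textbf{(H1)}, together with $u, v \ge 0$, the bracket is bounded above by $a_{\sup}$. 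Hence $u$ is a classical subsolution of the linear problem $\partial_s w = \Delta w - \chi \nabla v \cdot \nabla w + a_{\sup} w$ with $w(\cdot, t_0) = u_0$, whose solution $W$ dominates $u$ on $\R^N \times [t_0, t_0 + T]$ by the parabolic comparison principle.

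After the substitution $\tilde W(x, s) = e^{-a_{\sup}(s - t_0)} W(x, s)$ the reaction is removed, leaving a pure advection-diffusion equation with drift bounded in modulus by $\chi K_1$. By Feynman-Kac (equivalently, by Aronson-type Gaussian upper bounds on the fundamental solution), $\tilde W(x, t + t_0) = \E[u_0(X_t^x)]$, where $X_s^x$ is the It\^o diffusion started at $x$ at time $t_0$ with bounded drift $-\chi \nabla v$ and identity diffusion matrix. Splitting the expectation according to whether $|X_t^x|_\infty < 3L$ or not, for $|x|_\infty < 2L$ we get
\[
\tilde W(x, t + t_0) \le \delta_0 + M \cdot \P\bigl(|X_t^x|_\infty \ge 3L\bigr),
\]
and escape from $\{|y|_\infty < 3L\}$ starting in $\{|x|_\infty < 2L\}$ forces a displacement of at least $L$. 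A standard Gaussian tail estimate for It\^o diffusions with drift bounded by $\chi K_1$ gives $\P\bigl(\sup_{0 \le s \le T}|X_s^x - x| \ge L\bigr) \le C_N\, e^{-(L - \chi K_1 T)^2 / (C_N T)}$ once $L \ge 2\chi K_1 T$, with $C_N$ depending only on $N$. Choosing $\delta_0 := \tfrac{\varepsilon}{2} e^{-a_{\sup} T}$ and then $L_0$ so large that $M e^{a_{\sup} T} C_N e^{-(L_0 - \chi K_1 T)^2 / (C_N T)} \le \varepsilon/2$ yields the claimed estimate, via $u \le e^{a_{\sup}(s - t_0)} \tilde W$.

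The main obstacle is producing this localization estimate uniformly in $t_0 \in \R$, since $\nabla v$ depends implicitly on $t_0$; this is handled in the first step through \eqref{uniform-holder-bound-for-v}, which provides a $t_0$-independent bound on $\|\nabla v\|_\infty$ and decouples the linear estimate from the nonlinear coupling. A purely analytic alternative to Feynman-Kac is to construct an explicit radial supersolution of the form $\bar u(x, s) = e^{a_{\sup}(s - t_0)}\bigl[\delta_0 + M\,\psi\bigl((|x| - 3L + \chi K_1 (s - t_0))/\sqrt{s - t_0 + \eta}\bigr)\bigr]$, with $\psi$ a complementary-error-function-type profile, and verify the parabolic differential inequality directly, at the cost of slightly more calculation but without invoking any probability.
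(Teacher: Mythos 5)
Your argument is correct in substance, and it is a genuinely self-contained route: note that this paper does not prove the lemma at all but cites Lemma 3.3 of the companion paper \cite{SaSh_6_I}, where the localization is obtained by a comparison-function argument in the spirit of the barrier construction you sketch as your ``purely analytic alternative,'' rather than through a stochastic representation. Your chain of reductions is sound: \eqref{u-upper-bound-eq1} gives the uniform bound $\|u\|_\infty\le Me^{a_{\sup}T}$ on $[t_0,t_0+T]$; \eqref{uniform-holder-bound-for-v} (or, more directly, the estimate $\|\nabla v\|_\infty\le \frac{\mu\sqrt N}{\sqrt\lambda}\|u\|_\infty$ used repeatedly in Section 4) gives a drift bound $\chi K_1$ uniform in $t_0$, which is exactly the point where the nonlinear coupling is decoupled; rewriting the $u$-equation with $\Delta v=\lambda v-\mu u$ and using $b\ge b_{\inf}>\chi\mu$, $v\ge0$ makes $u$ a bounded subsolution of a linear drift--diffusion equation with zeroth-order coefficient $a_{\sup}$, so the comparison principle for bounded solutions applies; and the Gaussian exit estimate then yields the claim after the indicated choices of $\delta_0$ and $L_0$. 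Two minor points deserve care in a written version: the generator $\Delta-\chi\nabla v\cdot\nabla$ corresponds to the diffusion $dX_s=-\chi\nabla v\,ds+\sqrt2\,dB_s$ (not unit diffusion matrix), and since the drift is time-dependent the representation $\tilde W(x,t+t_0)=\E[u_0(X^x_t)]$ is the one for the time-reversed (two-parameter) evolution; neither affects anything, as only the bound $|{\rm drift}|\le\chi K_1$ enters, and one also needs the (standard, by Friedman's parametrix with the uniform H\"older bound on $\nabla v$) existence of the linear solution $W$, or else one can bypass $W$ entirely by comparing $u$ directly with your explicit erfc-type supersolution.

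One discrepancy with the statement as printed: your $\delta_0=\frac{\varepsilon}{2}e^{-a_{\sup}T}$ depends on $T$, whereas the lemma annotates $\delta_0=\delta_0(M,\varepsilon)$. This is not a defect of your proof: taking $a,b$ constant and $u_0\equiv\delta_0$, the solution is spatially homogeneous and follows the logistic ODE $\dot U=U(a-bU)$, so for $\varepsilon<a/b$ no $T$-independent $\delta_0$ can work; the $T$-dependence is unavoidable, and the parenthetical dependence in the statement should be read as informal (in its applications here, e.g.\ Claim 1 in the proof of Theorem \ref{existence-entire-sol}(iv), $T$ is fixed equal to $1$).
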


\begin{proof}
See \cite[Lemma 3.3]{SaSh_6_I}.
\end{proof}

\begin{lem}
\label{main-new-lm2} Assume that (H1) holds.
 For fixed $T>0$, there is $0<\delta_0^*(T)<M^+=\frac{a_{\sup}}{b_{\inf}-\chi\mu}+1$ such that for any $0< \delta\le \delta_0^*(T)$
 and for any $u_0$ with
$\delta\le u_0\le M^+$,
\begin{equation}
\label{aux-eq5-1}
\delta\le u(x,T+t_0;t_0,0,u_0)\le M^+\quad \forall\,\, x\in\R^N, \ \forall\ t_0\in\R.
\end{equation}
\end{lem}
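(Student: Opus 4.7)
The upper bound $u(x,T+t_0;t_0,u_0)\le M^+$ is immediate from \eqref{u-upper-bound-eq2}: since $\|u_0\|_\infty\le M^+$ and $a_{\sup}/(b_{\inf}-\chi\mu)=M^+-1<M^+$, the maximum on the right-hand side of \eqref{u-upper-bound-eq2} is at most $M^+$. The substance of the lemma is therefore the lower bound.

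My plan for the lower bound is to construct a spatially constant sub-solution $\phi(t)\equiv\delta$ and invoke the parabolic comparison principle. Expanding the chemotaxis term by means of the elliptic identity $\Delta v=\lambda v-\mu u$, the equation for $u$ rewrites as
$$
\partial_t u=\Delta u-\chi\nabla u\cdot\nabla v+u\bigl(a(x,t)+\chi\mu u-b(x,t)u-\chi\lambda v(x,t)\bigr).
$$
For $\phi(t)\equiv\delta$, the sub-solution inequality (using $\Delta\phi=\nabla\phi=0$) reduces to the pointwise condition
$$
0\le a(x,t)+\chi\mu u(x,t)-b(x,t)\delta-\chi\lambda v(x,t)\qquad\forall(x,t)\in\R^N\times[t_0,t_0+T].
$$
Combining $u\ge 0$ with the a priori bound $\|u(\cdot,t+t_0)\|_\infty\le M^+$ from Theorem \ref{global-existence-tm}(i) and the consequent elliptic estimate $\|v(\cdot,t+t_0)\|_\infty\le\mu M^+/\lambda$, a sufficient condition is $\delta\le (a_{\inf}-\chi\mu M^+)/b_{\sup}$. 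I would then set $\delta_0^*(T)$ equal to the minimum of this quantity and $M^+-1$ (so that $\delta_0^*(T)<M^+$), and the standard comparison principle applied to the linear parabolic inequality satisfied by $w=u-\delta$ (with coefficients controlled by the uniform $C^{1,\nu}$ bound on $v$ from Theorem \ref{global-existence-tm}(iii)) yields $u(\cdot,t+t_0;t_0,u_0)\ge\delta$ throughout $[0,T]$, in particular at $t=T$.

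The main obstacle is that $a_{\inf}-\chi\mu M^+$ need not be positive under (H1) alone, in which case the constant sub-solution breaks down. In that regime I plan to argue by contradiction using Lemma \ref{continuity with respect to open compact topology}: along a counterexample sequence $\delta_n\to 0^+$, $(u_{0n},t_{0n},x_n)$ with $u(x_n,T+t_{0n};t_{0n},u_{0n})<\delta_n$, translate in space by $x_n$ and use parabolic smoothing at an intermediate time $s_0\in(0,T)$ (exploiting the uniform Hölder bound on $v$ and hence on $u$) to obtain locally uniform equicontinuity of the shifted solutions at time $s_0+t_{0n}$. Lemma \ref{continuity with respect to open compact topology} then furnishes a limit solution $U^*$ satisfying $U^*(0,T-s_0)=0$, and by carefully choosing an auxiliary spatial shift that captures the supremum of $u_{0n}$ one can ensure the limiting initial datum is nontrivial; the strong parabolic maximum principle then forces $U^*>0$ at $t=T-s_0$, contradicting $U^*(0,T-s_0)=0$. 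Executing this compactness-and-maximum-principle step while keeping the limiting datum nontrivial under the weak hypothesis (H1) is where I expect the principal technical effort to occur.
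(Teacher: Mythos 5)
Your treatment of the upper bound is correct, and your constant-subsolution argument for the lower bound is also correct \emph{as far as it goes}: it needs $a_{\inf}-\chi\mu M^+-(b_{\sup}-\chi\mu)\delta\ge 0$ (you dropped the helpful $\chi\mu u$ term, which is fine), and as you yourself observe this is an extra hypothesis that does not follow from {\bf (H1)}, since $\chi\mu M^+=\chi\mu\big(\tfrac{a_{\sup}}{b_{\inf}-\chi\mu}+1\big)$ can exceed $a_{\inf}$. So everything hinges on your fallback argument for the general case, and that is where there is a genuine gap.

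The compactness-and-maximum-principle plan cannot work, because the statement is quantitative in $\delta$ and the worst case is precisely the one your limit procedure cannot see. Take the contradiction sequence with $u_{0n}\equiv\delta_n\to 0^+$ (a perfectly admissible datum, and in fact the delicate case): then $\sup u_{0n}=\delta_n\to 0$, so no spatial shift ``captures'' anything nontrivial, the shifted data converge locally uniformly to $0$, and the limit solution furnished by Lemma \ref{continuity with respect to open compact topology} is identically zero. The relation $U^*(0,T-s_0)=0$ is then vacuously consistent and yields no contradiction; and rescaling by $\delta_n$ to restore nontriviality destroys the uniform bound $0\le u\le M$ that the compactness lemma requires. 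What is actually needed is a quantitative lower-bound mechanism valid when the solution is small: this is exactly what the paper (following \cite[Lemma 3.5]{SaSh_6_I}, whose ingredients are recalled after the statement of Lemma \ref{main-new-lm2}) supplies. If $u_0\le\delta_0$ on a large box $|x|_\infty\le 3L$ around a point, then by \eqref{aux-eq4-2} and Lemma \ref{main-new-lm1} one has $\chi\lambda v\le \tfrac{a_0}{2}$ and $\chi|\nabla v|$ small on $D_L$ for a unit of time (the elliptic kernel localizes $v$, so the crude bound $\chi\lambda v\le\chi\mu M^+$ is never used), whence locally $u_t\ge\Delta u+b_\varepsilon\cdot\nabla u+a_0 u$ with $a_0=\tfrac{a_{\inf}}{3}$; comparison from below on $D_L$ with a multiple of the principal eigenfunction of \eqref{aux-eq1-2}, whose eigenvalue is negative for $L$ large and remains so under the small drift (cf. \eqref{au-eq1-5}), shows the value at the center cannot drop below the initial floor $\delta$. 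In the complementary case, where $u_0\ge\delta_0$ somewhere in the $3L$-box, one gets a fixed positive lower bound at time $T$, below which $\delta_0^*(T)$ is then chosen. Some such two-case, eigenvalue-based local analysis (or another genuinely quantitative small-density estimate) is the missing idea; the soft limiting argument you propose cannot replace it.
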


\begin{proof}
See \cite[Lemma 3.5]{SaSh_6_I}.
\end{proof}

While we referred to \cite{SaSh_6_I} for the proof of Lemma \ref{main-new-lm2}, for the sake of clarity in the arguments in the proof of our main result in next section, it is convenient  to point out some fundamental results developed in its proof. Let $a_0=\frac{a_{\inf}}{3}$, $D_{L}:=\{ x\in\R^N \ : \ |x_i| < L \, \forall\ i=1,\cdots,N\}$, and  consider the PDE
\begin{equation}\label{aux-eq1-1}
\begin{cases}
u_{t}-\Delta u-a_0u=0, \quad x\in D_L\cr
u=0 \qquad \quad\qquad x\in\partial D_{L}
\end{cases}
\end{equation}
and its corresponding  eigenvalue problem
\begin{equation}\label{aux-eq1-2}
\begin{cases}
-\Delta u-a_0u=\sigma u, \quad x\in D_L\cr
u=0 \qquad \quad\qquad x\in\partial D_{L}.
\end{cases}
\end{equation}
There is $L_0>1$ such that  the principal eigenvalue of \eqref{aux-eq1-2}, denoted by $\sigma_{_L}$, is negative for every $L\geq L_0$.  Moreover a principal eigenfunction, $\phi_L$, associated to the principal eigenvalue $\sigma_L$ can be chosen such that $0<\phi_L(x)<\phi(0)=1$ for all $x\in D_{L}\setminus\{0\}$ (see \cite{SaSh_6_I}). Moreover for every $0<\varepsilon_0\ll 1$,
{\it  there  is $0<\delta_0\ll 1$ such that for any $u_0\in C_{\rm unif}^b(\R^N)$ with
$0\leq u_0\leq M^+$ and $u_0(x)<\delta_0$ for $|x_i|\le 3L$, $i=1,2,\cdots,N$,
\begin{equation}
\label{aux-eq4-2}
0\le \lambda  v(x,t;t_0,x_0,u_0)\le \frac{a_0}{2\chi},\ \ | \nabla v(x,t;t_0,x_0,u_0)|<\frac{\epsilon_0}{2\chi} \ \ {\rm for}\ \ t_0\le t\le t_0+1,\ x\in D_L,\ x_0\in\R^N
\end{equation}
provided that $L\gg 1$} (see \cite{SaSh_6_I}).

Next we consider the following related periodic-perturbation of \eqref{aux-eq1-1},
\begin{equation}\label{au-eq1-3}
\begin{cases}
u_t-\Delta u -b_{\varepsilon}(x,t)\nabla u - a_0u=0 ,\quad x\in D_L\cr
u=0 \qquad \quad \qquad \qquad\qquad \qquad x\in\partial D_{L}.
\end{cases}
\end{equation}
with $|b_{\varepsilon}(x,t)|\leq \varepsilon$ , $b_{\varepsilon}(x,t+1)=b_{\varepsilon}(x,t)$, and its corresponding periodic eigenvalue problem
\begin{equation}\label{au-eq1-4}
\begin{cases}
u_t-\Delta u -b_{\varepsilon}(x,t)\nabla u - a_0u=\sigma u ,\quad x\in D_L,\ 0<t<1,\cr
u(x,t)=0,  \qquad \quad \qquad \qquad\qquad \qquad x\in\partial D_{L}, 0<t<1,\cr
u(x,0)=u(x,1), \ \ \qquad \qquad\qquad \qquad x\in D_{L}.
\end{cases}
\end{equation}

\noindent We suppose that $b_{\varepsilon}(x,t)$ is $1-$ periodic in $t\in\R$, that is, $b_{\varepsilon}(x,t+1)=b_{\varepsilon}(x,t)$ for all $x\in D_L$, and $t\in\R$,  and  let $U_{L,b_{\varepsilon}}(t,\tau)$, $\tau<t$, denote the solution operator of \eqref{au-eq1-3} on $L^{p}(D_L)$, $N\ll p<\infty$. For, $\tau<t$, the evolution operator $U_{L,\varepsilon}(t,\tau)$ is a compact and strongly positive operator on $W^{2,p}_{0}(D_L):=\{u\in W^{2,p}(D_{L})\ : \ u=0 \ \text{on} \ \partial D_{L}\}$.  Letting $K_{L,b_\varepsilon}:=U_{L,b_{\varepsilon}}(1,0)$, which is compact and strongly positive,  thus its spectrum radius $r_{L,\varepsilon}$, is positive.  By Krein-Rutman Theorem,  $r_{L,\varepsilon}$ is an eigenvalue of $K_{L,\varepsilon}$ with a corresponding positive eigenfunction $u_{L,\varepsilon}$. It is well  known that  $\sigma_{L}^{\varepsilon}:=-\ln(r_{L,\varepsilon})$ is the principal eigenvalue of \eqref{au-eq1-4} with positive 1-periodic eigenfunction  $\phi_{L,\varepsilon}(t)=e^{t\sigma_{L,\varepsilon}}U_{L,b_{\varepsilon}}(t,0)u_{L,\varepsilon}$ (see \cite{Peter Hess}). Note that $U_{L}(t)(\phi_L)=e^{-t\sigma_{L}}\phi_L$, where $U_{L}(t)$ denotes the solution operator of \eqref{aux-eq1-1}. It follows that $K_{L}(\phi_{L})=U_{L}(1)(\phi_L)=e^{-\sigma_{L}}\phi_L$, which implies that $r_{L}\geq e^{-\sigma_L}$. By perturbation theory for parabolic equations, we have that $U_{L,b_{\varepsilon}}(1,0)\to U_{L}(1)$ as $\|b_\varepsilon\|_{C(\bar{D}_{L}\times[0, 1])}\to 0$. Thus, there is $0<\varepsilon_0(L)\ll 1$ such that $r_{L,\varepsilon}\geq e^{-\frac{\sigma_L}{2}}$ whenever  $\|b_\varepsilon\|_{C(\bar{D}_{L}\times[0, 1])}\leq \varepsilon_0(L)$. Hence
$$
\sigma_{L,\varepsilon}=-\ln(r_{L,\varepsilon})\leq \frac{\sigma_L}{2}<0, \quad 0<\varepsilon<\varepsilon_0(L).
$$
Note that $U_{L,b_{\varepsilon}}(t,\tau)\phi_{L,\varepsilon}(\tau)=e^{-(t-\tau)\sigma_{L,\varepsilon}}\phi_{L,\varepsilon}(t)$. Thus for every nonnegative initial $u_0\in C(\overline{D}_{L})$ with $\|u_{0}\|_{\infty}>0$, we have that
\begin{equation}\label{au-eq1-5}
 \sup_{x\in D_L,\tau<t}|(U_{L,b_{\varepsilon}}(t,\tau)u_0)(x)|=\infty, \quad \forall\ \|b_\varepsilon\|_{C(\bar{D}_{L}\times[0, 1])}<\varepsilon_0(L).
\end{equation}

\section{Existence of strictly positive entire solutions}

In this section, we  study the existence of strictly positive entire solutions and prove Theorem \ref{existence-entire-sol}.

\begin{proof}[Proof of Theorem \ref{existence-entire-sol}]

Let $T>0$ be fixed and $\delta_0:=\delta_0^*(T)$ and $M^+=\frac{a_{\sup}}{b_{\inf}-\chi\mu}+1$ be given in Lemma \ref{main-new-lm2}. It follows from Lemma \ref{main-new-lm2}  that
\begin{equation}\label{b-eq0}
\delta_0\leq u(x,T-kT;-kT,u_0)\leq M^+ , \quad\ x\in\R^n, \ k\geq 1, \, \delta_0\leq u_0\leq M^+.
\end{equation} Thus, it follows by induction and uniqueness of solution that
\begin{equation}\label{b-eq01}
\delta_0\leq u(x,nT-kT;-kT,u_0)\leq M^+ , \quad\ x\in\R^n, \ k\geq 1,\, n\geq 1 ,\ \ \delta_0\leq u_0\leq M^+.
\end{equation}
Let $u^k_{n}(x):=u(x,-nT;-kT,\delta_0)$ for all $x\in\R^n$, and $k\geq n\geq  0$.
 Then by a priori estimates for parabolic equations (see \cite{Friedman}), the sequence $\{u^k_{0}\}_{k\geq 1}$ has a locally uniformly convergent subsequence $\{u^{k'}_{0}\}_{k\geq 1}$ to some $u^{*}$ with $u^*\in C^{\nu}_{\rm unif}(\R^n)$ for $0<\nu<1$. Let $u^{+}(x,t)=u(x,t;0,u^*)$ for every $x\in\R^n$ and $t\geq 0$. We claim that $u^{+}(\cdot,\cdot)$ has a backward extension. Indeed, by uniqueness of solution of \eqref{P}, for every $1\leq n\leq k'$, we have that
 \begin{equation}\label{b-eq1}
u^{k'}_{0}(\cdot)=u(\cdot,0;-nT,u(\cdot,-nT;-k'T,\delta_0)) =u(\cdot,0;-nT,u^{k'}_{n}).
 \end{equation}
 Similarly as above, for every $n\geq 1$, there is a function $u^{*}_{n}\in C^{b}_{\rm unif}(\R^n)$ and a subsequence $\{u^{k'_{n}}_{n}\}_{k\geq 1}$ of $\{u^{k'}_{n}\}$ with $u^{k'_{n}}_{n}\rightarrow u^{*}_{n} $ locally uniformly as $k_n^{'}\to\infty$.

 Since $u^{k'_n}_{0}\to u^*$ for each $n\geq 1$ locally uniformly, it follows from \eqref{b-eq1} and Lemma \ref{continuity with respect to open compact topology} that
 $$ u^{*}(\cdot)=u(\cdot,0;-nT,u^{*}_{n}). $$
 Therefore
 \begin{equation}\label{b-eq2}
u^{+}(x,t)=u(x,t;0,u^*)=u(x,t;0,u(\cdot,0;-nT,u^{*}_{n})) =u(x,t;-nT,u^{*}_{n})
 \end{equation}
 for all $x\in\R^N$ and $t\ge 0$.  Note that we have used the uniqueness of solutions of \eqref{P} given by Theorem \ref{global-existence-tm} to derive the last equality in \eqref{b-eq2}.
 Since $u(\cdot,t;-nT,u^{*}_{n})$ is defined for all $t\geq -nT$, then it follows from \eqref{b-eq2} that $u^{+}(x,t)$ has extension to $\R^N\times[-nT, \infty)$ for every $n\in\mathbb{N}$. Therefore, $u^{+}(x,t)$ has a backward extension on $\R^N\times\R$. Note that \eqref{b-eq01} implies that $\delta_0\leq u^*\leq M^+$. Thus, by Theorem \ref{Main-thm1}(i) and Lemma \ref{main-lem1}, we obtain that $0<\inf_{x,t} u^{+}(x,t)\leq \sup_{x,t}u^{+}(x,t)\leq M^+$.
Hence $(u^+(x,t),v^+(x,t))$ is a positive entire solution of \eqref{P}.

\smallskip

 (i) Suppose that  $(u^+(x,t),v^+(x,t))$ is a strictly positive entire solution of \eqref{P}.
  Assume  $u^{+}_{\sup}<\frac{a_{\inf}}{b_{\sup}}$.   Let $T>0$ such that $a_{\inf}>e^{a_{\sup}T}b_{\sup}u^{+}_{\sup}$. It follows from \eqref{eq1-main-lem1} that for every $x\in\R^N$, $t\in\R$, we have
 $$
u^{+}(x,t)=u^{+}(x,t;t-T,u^{+}(\cdot,t-T))\geq e^{T(a_{\inf}-b_{\sup}\|u^{+}(\cdot,t-T)\|_{\infty}e^{a_{\sup}T})}\inf_{y\in\R^N}u^{+}(y,t-T),
 $$
 which implies that
 $$
u^{+}_{\inf}\geq u^{+}_{\inf}e^{T(a_{\inf}-e^{a_{\sup}T}b_{\sup}u^{+}_{\sup})}.
 $$
 This is impossible since $a_{\inf}-e^{a_{\sup}T}b_{\sup}u^{+}_{\sup}>0$. Thus, we must have $u^{+}_{\sup}\geq \frac{a_{\inf}}{b_{\sup}}$.
 Hence the first inequality in \eqref{bound for positive sol} holds.

 Note that
 \begin{equation}\label{b-eq3}
\underline{u}(t-t_0;u^+_{\inf})\leq u^{+}(x,t)\leq \overline{u}(t-t_0;u^{+}_{\sup}), \forall\ t_{0}\in\R, t\geq t_0, \ x\in\R^N,
 \end{equation}
where $\underline{u}(t;u_{\inf}^+)$ solves
$$
\begin{cases}
\frac{d}{dt}\underline{u}=\underline{u}(a_{\inf}-\chi\mu u^+_{\sup}-(b_{\sup}-\chi\mu)\underline{u}),\quad t>0\cr
\underline{u}(0)=u^{+}_{\inf},
\end{cases}
$$ and
$\overline{u}(t;u^+_{\sup})$ solves
$$
\begin{cases}
\frac{d}{dt}\overline{u}=\overline{u}(a_{\sup}-\chi\mu u^+_{\inf}-(b_{\inf}-\chi\mu)\overline{u}),\quad t>0\cr
\underline{u}(0)=u^{+}_{\sup}.
\end{cases}
$$
Note also that
\begin{equation}\label{b-eq4}
\lim_{t_0\to -\infty}\underline{u}(t-t_0;u^+_{\inf})=\frac{(a_{\inf}-\chi\mu{u^+_{\sup}})_{+}}{b_{\sup}-\chi\mu}, \quad  \lim_{t_0\to -\infty}\overline{u}(t-t_0;u^+_{\sup})=\frac{(a_{\sup}-\chi\mu{u^+_{\inf}})_{+}}{b_{\inf}-\chi\mu}.
\end{equation}
Hence, it follows from \eqref{b-eq3} and \eqref{b-eq4} that
\begin{equation}\label{b-e}
\frac{(a_{\inf}-\chi\mu{u^+_{\sup}})_{+}}{b_{\sup}-\chi\mu}\leq u^+_{\inf}\quad \text{and}\quad u^{+}_{\sup}\leq \frac{(a_{\sup}-\chi\mu{u^+_{\inf}})_{+}}{b_{\inf}-\chi\mu}.
\end{equation}
The second inequality of \eqref{b-e} implies that $u^+_{\sup}\le \frac{a_{\sup}}{b_{\inf}-\chi\mu}$. This is the second inequality in \eqref{bound for positive sol}. (i) thus follows.

\smallskip

(ii) Since $0<u^{+}_{\inf}\leq u^+_{\sup}$,  \eqref{b-e}  implies  that
\begin{equation}\label{b-eq5}
(b_{\inf}-\chi\mu)a_{\inf}-\chi\mu a_{\sup}\leq \big((b_{\inf}-\chi\mu)(b_{\sup}-\chi\mu) -(\chi\mu)^2\big)u^+_{\inf}
\end{equation}
and
\begin{equation}\label{b-eq6}
\big((b_{\inf}-\chi\mu)(b_{\sup}-\chi\mu) -(\chi\mu)^2\big)u^+_{\sup}\leq (b_{\sup}-\chi\mu) a_{\sup}-\chi\mu a_{\inf}
\end{equation}
Since {\bf (H2)} holds and $u^{+}_{\inf}>0$, it follows from \eqref{b-eq5} that $(b_{\inf}-\chi\mu)(b_{\sup}-\chi\mu) -(\chi\mu)^2 >0$. Thus \eqref{attracting-rect-eq0} follows from \eqref{b-eq5} and \eqref{b-eq6}.

\smallskip

 (iii) Let $\delta_0^*(T)$ be given by Lemma \ref{main-new-lm2} and $E(T):=\{u\in C^{b}_{\rm unif}(\R^N)\ |\ \delta_{0}^*(T)\leq u_{\inf}\leq u_{\sup}\leq \frac{a_{\inf}}{b_{\sup}-\chi\mu}\}$ endowed with the open compact topology. Lemma \ref{main-new-lm2} implies that the map $\mathbb{P}_{T} :E(T)\ni u_0\mapsto u(\cdot,T;0,u_0)\in E(T)$ is well defined. Note that $E(T)$ is a closed bounded convex  subset of $C^{b}_{\rm uinf}(\R^N)$ endowed with the open compact topology.  Let $\{u_{0n}\}_{n\geq 1}\subset E(T)$ and $u_0\in E(T)$ such that $u_{0n}\to u_0$ uniformly on every compact subset of $\R^N$. For every $n\geq 1$, we have
$$
u_{t}(\cdot,\cdot;0,u_{0n})=\Delta u-\chi\nabla v(\cdot,\cdot;u_{0n})\cdot\nabla u+(a-\chi\lambda v(\cdot,\cdot;0,u_{0n})  -(b-\chi\mu)u)u, \quad t>0
$$
and Theorem \ref{global-existence-tm} (ii) gives
\begin{equation}\label{b-eq7}
\sup_{0\leq t\leq T, n\geq 1}\|v(\cdot,t;0,u_{0n})\|_{C^{1,\nu}_{\rm unif}(\R^N)}<\infty.
\end{equation}
Since $u_{0n}\to u_0$ locally uniformly, it follows from  Lemma \ref{continuity with respect to open compact topology} that there is a subsequence $\{(u(\cdot,\cdot;0,u_{0n'}),v(\cdot,\cdot;0,u_{0n'})) \}_{n\geq 1}$ of $\{(u(\cdot,\cdot;0,u_{0n}),v(\cdot,\cdot;0,u_{0n})) \}_{n\geq 1}$ and a function $(u,v)\in C^{2,1}(\R^{N}\times(0,\infty))$ such that $(u(\cdot,\cdot;0,u_{0n'}),v(\cdot,\cdot;0,u_{0n'}))\to (u,v)$ locally uniformly in $C^{2,1}(\R^N\times(0, \infty))$. Moreover, $ (u,v) $ satisfies $\Delta v-\lambda v+\mu u=0$ and
$$
\begin{cases}
u_{t}=\Delta u-\chi\nabla v\cdot\nabla u+(a-\chi\lambda v  -(b-\chi\mu)u)u, \quad 0<t\leq T\cr
u(0)=u_{0}.
\end{cases}
$$
Thus $(u(x,t),v(x,t))=(u(x,t;0,u_0),v(x,t;0,u_0))$ for every $x\in\R^N$, $t\in[0, T]$. This implies that $u(\cdot,T;0,u_{0n'})\to u(\cdot,T;0,u_0)$ locally uniformly. Hence $\mathbb{P}_{T}$ is continuous.

 Next let $\{u_{0n}\}_{n\geq}\in E(T)$ be given. It follows from \eqref{b-eq7} and a priori estimate for parabolic equations that
 $$
\sup_{n}\|u(\cdot,T;0,u_{0n})\|_{C^{\nu}_{\rm}(\R^N)}<\infty.
 $$ Thus $\{u(\cdot,T;0,u_{0n})\}_{n\geq 1}$ has a convergent subsequence in the open compact topology in $E(T)$. Hence $\mathbb{P}_{T}$ is a compact map. Therefore, Schauder's fixed theorem implies that there is $u^*\in E(T)$ such that $u(\cdot,T;0,u^*)=u^*$.  Clearly $(u(\cdot,\cdot;0,u^*),v(\cdot,\cdot;0,u^*))$ is a $T-$periodic solution of \eqref{P} and can be extended uniquely to a positive entire solution.

\smallskip

  (iv) For every $n\geq 1$, let $T_n=\frac{1}{n}$ and $u_{0n}\in C^{b}_{\rm unif}(\R^N)$, such that $(u(x,t;u_{0n}),v(x,t;u_{0n}))$ is a positive $T_n-$ periodic solution of \eqref{P} with  $\frac{a_{\inf}}{b_{\sup}}\le \sup_{(x,t)}u(x,t;u_{0n})\leq\frac{a_{\sup}}{b_{\inf}-\chi\mu}$.

\smallskip

 \noindent {\bf Claim 1 :}{\it There is $L\gg 1$  such that
 \begin{equation}\label{b-eq8}
 \inf_{n\geq 1, x_0\in\R^N}\sup_{|x|<L}u_{0n}(x+x_0)>0.
\end{equation}
 }

Let $a_0=\frac{a_{\inf}}{3}$ and $L_0\gg 1$ be fixed such that the principal eigenvalue $\lambda_L$ of \eqref{aux-eq1-2} is negative for every $L\geq L_0$. Note that for every nonnegative uniformly continuous function $u_0(x)$ in $D_{L}$, $L\geq L_0$, with $\|u_0\|_{L^\infty(D_L)}>0$, we have that $
\|u(\cdot,t;u_0)\|_{\infty}\to \infty, \ \text{as} \ t\to\infty,$
where $u(x,t;u_0)$ solves the initial- boundary problem \eqref{aux-eq1-1}.
Hence, by \eqref{au-eq1-5},  for ever $L\geq L_0$, there is $\varepsilon_0(L)>0$ such that if $\sup_{x\in D_{L},0\leq t\leq 1}|b_{\varepsilon_0}(x,t)|\leq \varepsilon_0$, $b_{\varepsilon_0}(x,t+1)=b_{\varepsilon_0}(x,t)$  for every $x\in D_{L},\ t\geq0$, then for every nonnegative continuous function $u_0(x)$ on $D_{L}$ with $\|u_0\|_{L^\infty(D_L)}>0$, we have that
\begin{equation}\label{b-eq8'}
\sup_{x\in D_L,t>0}(U_{L,b_{\varepsilon_0(L)}}(t;0)u_0)(x)= \infty.
\end{equation}
where $U_{b_{\varepsilon_0(L)},L}(x,t;0)u_0$
solves the initial boundary value problem \eqref{au-eq1-3}  (with $T=1$).

 Taking $T=1$, $M=\frac{a_{\sup}}{b_{\inf}-\chi\mu}$ and $\varepsilon=\min\{\frac{a_{\inf}}{3(\chi\lambda + b_{\sup}-\chi\mu)},\varepsilon_0(L_0)\} $,  it follows from Lemma \ref{main-new-lm1} and inequalities \eqref{Local-uniform-boun-for-u} and \eqref{aux-eq4-2} that there is $L_1>L_0$  and $\delta_0>0$ such that for every $L\geq L_{1}$
 \begin{equation}\label{b-eq9}
 u(x,t+t_0;u_0)<\varepsilon , \ \ v(x,t+t_0;t_0,u_0)<\varepsilon, \ \ \text{and} \ \ |\nabla v(x,t+t_0;t_0,u_0)|\leq \varepsilon \qquad \forall \,\, 0\leq t\leq 1, \ \ \forall \ x\in D_{L}\
 \end{equation}
 whenever $0\leq u_0(x)\leq \delta_0$ for $|x|\leq  3L,\ \ i=1,\cdots, N.$  Suppose that there is some $n\geq 1$ and $x_0\in\R^N$, such that
 \begin{equation}\label{b-eq10}
 \sup_{|x|_{\infty}< 3L_{1}}u_{0n}(x+x_0)<\delta_0.
 \end{equation}
 Thus, since $(u(x,t;0,u_{0n}),v(x,t;0,u_{0n}))$ is $T_{n}-$periodic with $T_{n}\leq 1$, it follows from \eqref{b-eq9} that
 \begin{align*}
 u_{t}(,\cdot;0,u_{0n})&= \Delta u(\cdot,\cdot;0,u_{0n})-\chi(\nabla v\nabla u)(\cdot,\cdot;0,u_{0n})+ u(a-(b-\chi\mu
)u-\chi\lambda v)\\
&\geq \Delta u(\cdot,\cdot;0,u_{0n})-\chi(\nabla v\nabla u)(\cdot,\cdot;0,u_{0n})+ \frac{a_{\inf}}{3}u(\cdot,\cdot,0,u_{0n}), \quad |x-x_0|<L_1, \ t\geq 0.
 \end{align*}
 Therefore, by comparison principle for parabolic equations, since $L_1\geq L_0$, we have that
 \begin{equation}\label{b-eq11}
 u(x+x_0,t;0,u_{0n})\geq U_{b_{\varepsilon_0},L_0}(x,t;0)u_{0n|_{D_{L_0}}}, \quad \forall\ |x|_{\infty}< L_1, \ \forall t\geq 0
 \end{equation}
 where $u_{0n|_{D_{L_0}}}$ denotes the restriction of $u_{0n}$ on $D_{L_0}$ and $b_{\varepsilon_0}(x,t)=\nabla v(x+x_0,t;0,u_{0n})$ for every $x\in D_{L_0}, t\geq 0$. It follows from \eqref{b-eq8'}  and \eqref{b-eq11} that $\sup_{x,t}u(x,t;0,u_{0n})=\infty$, which is a contradiction.
 Hence claim 1 follows.

 By a priori estimate for parabolic equations, we might suppose that $u_{0n}\to u^* \in C^{b}_{\rm unif}(\R^N)$ in the open compact topology. Let $u^{+}(x,t)=u(x,t;0,u^*)$.

\smallskip

 \noindent {\bf Claim 2:} {\it $u^{+}(x,t)=u^*(x)$ for every $x\in\R^N,$ and $t\geq 0$.}

 Without loss of generality, let us suppose that $u_{0n}\to u^*$ in the open compact topology.  Let  $x\in\R^N$ and $t>0$ be fixed. For every $n\geq 1$, we have that
 \begin{align}\label{b-eq12}
u^{+}(x,t)-u^*(x)&=\underbrace{u(x,t;0,u^*)-u(x,t;0,u_{0n})}_{I_{1,n}(x,t)}+ \underbrace{u(x,t;0,u_{0n})-u(x,[nt]T_n;0,u_{0n})}_{I_{2,n}(x,t)}\cr
&\,\,\,\, +\underbrace{u(x,[nt]T_{n};0,u_{0n})-u^*}_{I_{3,n}(x,t)}.
 \end{align}
 Since $u(x,t;0,u_{0n})$ is $T_n-$periodic, then
 $$I_{3,n}(x,t)=u_{0n}(x)-u^{*}(x)\to 0, \quad \text{as}\ \ n\to\infty $$
  in open compact topology.
 It follows from the variation of constant formula that
 \begin{align*}
I_{2,n}(x,t)&=-\chi\underbrace{\int_{0}^{t-[nt]T_n}e^{(t-[nt]T_n-s)(\Delta-I)}\nabla(u(x,s+[nt]T_n;0,u_{0n})\nabla v(x,s+[nt]T_n;0,u_{0n}))ds}_{I_{2,n}^{1}(x,t)} \nonumber\\
&\,\,\,\, +\underbrace{\int_{0}^{t-[nt]T_n}e^{(t-[nt]T_n-s)(\Delta-I)}\left(((a+1-bu)u)\right)(x,s+[nt]T_n;0,u_{0n})ds}_{I^2_{2,n}(x,t)}
 \end{align*}
 Since $\|u_{0n}\|_{\infty}\leq M$, there is a constant $C$ depending only on $M$ such that
 $$
|I_{2,n}^{1}(x,t)|\leq C\int_{0}^{t-[nt]T_n}(t-[nt]T_n-s)^{-\frac{1}{2}}e^{-(t-[nt]T_n-s)}ds\leq C(t-[nt]T_n)^{\frac{1}{2}}\to0 , \quad \text{as}\ n\to\infty,
 $$
 and
 $$
 |I_{2,n}^{2}(x,t)|\leq C\int_{0}^{t-[nt]T_n}e^{-(t-[nt]T_n-s)}ds=C(1-e^{-(t-[nt]T_n)})\to 0, \quad \text{as}\ n\to\infty.
 $$
 Hence $I_{2,n}(x,t)\to 0$ as $n\to\infty$  in the open compact topology . Since $u_{0n}\to u^{*}$ in the open compact topology, then by Lemma \ref{continuity with respect to open compact topology}, we have that $I_{1,n}(x,t)\to0$ as $n\to\infty$  in open compact topology. Therefore, we conclude from \eqref{b-eq12} that $u^{+}(x,t)=u^{*}(x)$, which complete the proof of Claim 2.

 Next, it follows from Claim 1 that there $L\gg 1$ such that
 \begin{equation}\label{b-eq13}
 \inf_{x_0\in\R^N}\sup_{|x|_{\infty}\leq L}u^*(x+x_0)>0.
 \end{equation}
 Suppose by contradiction that $u^*_{\inf}=0$. Then there is a sequence $\{x_{n}\}_{n\geq 1}$ such that $u^{*}(x_n)\to0$ as $n\to\infty$. Let $u_{n}(x)=u^{*}(x+x_n)$ for every $n\geq 1$. By a prior estimate for parabolic equations, as above, we may suppose that  $u_{n}\to \tilde{u}$ in the open compact topology of $\R^N$ and  $\tilde{u}$ is a steady solution of \eqref{P}. Furthermore, \eqref{b-eq13} implies that $\|\tilde{u}\|_{\infty}>0$. Hence by comparison principle for parabolic equations, we that $\tilde{u}(0)>0$. But $\tilde{u}(0)=\lim_{n\to\infty}u^{*}(x_n)=0$, which impossible. Thus $u_{\inf}^*>0$. Therefore $u^*(x)$ is a positive steady solution of \eqref{P}.
\end{proof}

\section{Uniqueness and stability of strictly positive entire solutions}

In this section we study the uniqueness and stability of strictly  positive entire solutions of \eqref{P}  and prove
 Theorem  \ref{Stability of positive space homogeneous steady state solution} and Theorem \ref{perturbation effect}.  First, we study these questions for general logistic type source function $f(x,t,u)=u(a(x,t)-b(x,t)u)$, and prove that there is a positive constant $\chi_0$ such that for every $ 0\leq \chi<\chi_0$, \eqref{P} has a unique exponentially stable positive entire solution. Next, we examine two frequently encountered cases of logistic source in the literature, mainly space independent logistic source function $f_0(x,t,u)=u(a(t)-b(t)u)$ and logistic source function of the form $f_1(x,t,u)=b(x,t)(\kappa -u)u$, $\kappa>0$, and derive explicit lower bound for $\chi_0$. In this section we shall always assume that {\bf (H1)} holds, so that  pointwise persistence phenomena occurs in \eqref{P} (see Theorem \ref{Main-thm1} (i)). Furthermore, for every  initial function $u_0\in C^b_{\rm unif}(\R^N)$ with $\inf_{x}u_0(x)>0$ and every initial time $t_0\in\R$, it follows from Remark \ref{new-rk0} that there exists  $T_1(u_0)\gg 1$ such that the unique nonnegative global classical solution $(u(x,t+t_0;t_0,u_0), v(x,t+t_0;t_0,u_0))$ of \eqref{P} with $(u(x,t_0;t_0,u_0)=u_0(x)$ satisfies
 \begin{equation}\label{eq-f-0-01}
 0<m(u_0)\leq u(x,t+t_0;t_0,u_0) \leq  \frac{a_{\sup}}{b_{\inf}-\chi\mu}, \forall\ t\geq T_{1}(u_0),\ \forall\ x\in\R^N,\  \forall \,\, t_0\in\R.
 \end{equation}
Henceforth, we shall always suppose  that  $0<u_{0\inf}\leq u_{0\sup}\leq \frac{a}{b_{\inf}-\chi\mu}$.  Note that, by a variation of constant formula, we have that
 \begin{align}\label{eq-f-0-02}
 u(\cdot,t+t_1+t_0;t_0,u_0)&=T(t)u(\cdot,t_1+t_0;t_0,u_0) -\chi\int_0^tT(t-s)\nabla\cdot (u\nabla v)(\cdot,s+t_1+t_0;t_0,u_0)ds\nonumber\\
 & \,\,\, +\int_0^tT(t-s)((a+1-ub)u)(\cdot,s+t_1+t_0;t_0,u_0))ds,
 \end{align}
where $\{T(t)\}_{t\ge 0}$ denotes the analytic semigroup generated by $\Delta-I$ on  $X:=C^b_{\rm unif}(\R^N)$. We  let $X^{\beta}$, $0<\beta\leq 1$, stands for the fractional power spaces associated to $I-\Delta$.
Thus, it holds that (see\cite{Dan Henry}) $X^{\frac{1}{2}+\beta}$ is continuously embedded in $C^{b,1}_{\rm unif}(\R^N)$  with
\begin{equation}\label{new-eq1}
\|\nabla u\|_{C^{b}_{\rm unif}(\R^N)}\leq \frac{\sqrt{N}\Gamma(\beta)}{\sqrt{\pi}\Gamma(\frac{1}{2}+\beta)}\|u\|_{X^{\frac{1}{2}+\beta}}, \ \quad  \forall\ u\in X^{\beta+\frac{1}{2}}, \quad \forall\ 0<\beta<\frac{1}{2},
\end{equation}
\begin{equation}\label{new-eq2}
\|u\|_{C^b_{\rm unif}(\R^N)}\leq \|u\|_{X^{\beta}}, \quad \forall \ u\in X^{\beta}, \quad \forall\ 0<\beta<1,
\end{equation}
and
\begin{equation}\label{new-eq3}
\|T(t)u\|_{X^{\beta}}\leq C_{\beta}t^{-\beta}e^{-t}\|u\|_{\infty}, \quad \ \forall\ t>0, \ \forall\ u\in\ X^{\beta},\ \forall\ 0<\beta<1.
\end{equation}

The next Lemma provides an a priori bound on the sup-norm of gradient of positive entire solutions to \eqref{P}.

\medskip

\begin{lem}\label{bound for gradient of positive entire solution} There is a positive constant $C$ independent of $\chi$, $a$, $b$, $\lambda$ and $\mu$ such that for any positive entire solution $(u^+_{\chi}(x,t),v^+_{\chi}(x,t))$ of \eqref{P}, it holds that
\begin{equation}\label{gradient estimate of positive entire solution}
\|\nabla u^{+}_{\chi}(\cdot,t+t_0)\|_{\infty}\leq C_{\frac{3}{4}}\frac{\sqrt{N}\Gamma(\frac{1}{4})}{\sqrt{\pi}\Gamma(\frac{3}{4})}M_0e^{-t}t^{-\frac{3}{4}}\big(1+CM_2t^{\frac{1}{4}} \big)e^{2t\big(\Gamma(\frac{1}{4})M_2\big)^4}, \quad \forall t_0\in\R, \ \forall t>0,
\end{equation} where $M_0=\frac{a_{\sup}}{b_{\inf}-\chi\mu}$ and $M_{1}=2a_{\sup}+1+\chi\mu M_{0}$ and $M_2:= C_{\frac{3}{4}}\Big(\frac{\chi \mu N\Gamma(\frac{1}{4})}{\sqrt{\pi\lambda}\Gamma(\frac{3}{4})}M_0+M_1\Big) $ and $C_{\frac{3}{4}}$ is given by \eqref{new-eq3}.
\end{lem}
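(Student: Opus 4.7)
The plan is to apply the variation of constants formula \eqref{eq-f-0-02} to the entire solution $u^+_\chi$ at initial time $t_0$, take the $X^{3/4}$-norm on both sides, close a singular Gronwall inequality for $w(t):=\|u^+_\chi(\cdot,t+t_0)\|_{X^{3/4}}$, and finally use the embedding \eqref{new-eq1} with $\beta=1/4$ to convert back to $\|\nabla u^+_\chi\|_\infty$. The preliminary step is to rewrite the chemotactic drift so that no second-order derivative of $v^+_\chi$ ever reaches the semigroup: using the elliptic equation $\Delta v^+_\chi=\lambda v^+_\chi-\mu u^+_\chi$,
\begin{equation*}
\nabla\cdot(u^+_\chi\nabla v^+_\chi)=\nabla u^+_\chi\cdot\nabla v^+_\chi+\lambda u^+_\chi v^+_\chi-\mu(u^+_\chi)^2,
\end{equation*}
so that \eqref{eq-f-0-02} becomes $u^+_\chi(\cdot,t+t_0)=T(t)u^+_\chi(\cdot,t_0)+\int_0^t T(t-s)G(\cdot,s+t_0)\,ds$, with $G$ a bounded nonlinearity involving $u^+_\chi$, $v^+_\chi$ and the product $\nabla u^+_\chi\cdot\nabla v^+_\chi$.

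Next I would collect a priori bounds. Theorem \ref{existence-entire-sol}(i) gives $\|u^+_\chi\|_\infty\le M_0$; representing $v^+_\chi=\mu\int_0^\infty e^{-\lambda s}T_0(s)u^+_\chi\,ds$ with $T_0$ the heat semigroup and combining $\|\nabla T_0(s)\|_{L^\infty\to L^\infty}\le\sqrt{N/(\pi s)}$ with $\int_0^\infty e^{-\lambda s}s^{-1/2}\,ds=\sqrt{\pi/\lambda}$ yields $\|v^+_\chi\|_\infty\le\mu M_0/\lambda$ and $\|\nabla v^+_\chi\|_\infty\le\mu\sqrt{N/\lambda}\,M_0$. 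These estimates collapse the pointwise part of $G$ into a term of size $M_0M_1$ and reduce the drift to $\chi\mu\sqrt{N/\lambda}\,M_0\,\|\nabla u^+_\chi\|_\infty$. Taking the $X^{3/4}$-norm of the integral identity, invoking the semigroup bound \eqref{new-eq3} with $\beta=3/4$, and then using \eqref{new-eq1} with $\beta=1/4$ to write $\|\nabla u^+_\chi\|_\infty\le(\sqrt{N}\Gamma(1/4)/(\sqrt{\pi}\,\Gamma(3/4)))w$, I arrive at the closed singular Gronwall inequality
\begin{equation*}
w(t)\le C_{3/4}M_0\,t^{-3/4}e^{-t}+\int_0^t(t-s)^{-3/4}e^{-(t-s)}\bigl(C_{3/4}M_0 M_1+M_2\,w(s)\bigr)\,ds,
\end{equation*}
where the coefficient of $w(s)$ inside the integral is exactly $M_2$ as defined in the statement (the factor $N/\sqrt{\pi\lambda}$ arises from combining the $\sqrt{N}/\sqrt{\pi}$ from \eqref{new-eq1} applied to $u^+_\chi$ with the $\sqrt{N/\lambda}$ bound on $\|\nabla v^+_\chi\|_\infty$).

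The last step is a Henry-type iteration of this singular inequality. Because the kernel $(t-s)^{-3/4}$ is integrable but of exponent above $1/2$, four successive convolutions are needed to eliminate the singularity; at each step one uses the Beta identity $B(1/4,1/4)=\Gamma(1/4)^2/\Gamma(1/2)$, and after four convolutions a factor $\Gamma(1/4)^4$ surfaces, producing the exponent $2t(\Gamma(1/4)M_2)^4$ appearing in \eqref{gradient estimate of positive entire solution}. The exponential decay $e^{-(t-s)}$ is preserved under these convolutions and combines with the leading $e^{-t}$ from $T(t)u^+_\chi(\cdot,t_0)$ to give the overall $e^{-t}t^{-3/4}$ prefactor, while the polynomial correction $1+CM_2t^{1/4}$ comes from the first Picard iterate of the linear-in-$w$ term. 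Multiplying the resulting bound on $w(t)$ by the embedding constant $\sqrt{N}\Gamma(1/4)/(\sqrt{\pi}\,\Gamma(3/4))$ from \eqref{new-eq1} then delivers \eqref{gradient estimate of positive entire solution}. The main obstacle is the bookkeeping in this singular Gronwall iteration: the multiplicative constants $C_{3/4}$, $\Gamma(1/4)$ and $\sqrt{N}/(\sqrt{\pi}\,\Gamma(3/4))$ must be tracked through four successive convolutions of the singular kernel and combined with the exponential factor so that the output matches the stated form exactly.
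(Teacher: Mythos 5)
Your proposal is correct and follows essentially the same route as the paper: the same variation-of-constants identity \eqref{eq-f-0-02} with the chemotactic term rewritten via $\Delta v^+_{\chi}=\lambda v^+_{\chi}-\mu u^+_{\chi}$, the same $L^\infty$ bounds on $u^+_{\chi}$, $v^+_{\chi}$, $\nabla v^+_{\chi}$, the same $X^{\frac{3}{4}}$-estimate closed through the embedding \eqref{new-eq1} with $\beta=\frac14$, and a singular Gronwall inequality to conclude. The only cosmetic differences are that the paper cites Amann's Theorem 3.1.1 for the singular Gronwall step rather than performing the Henry-type iteration by hand, and it absorbs the reaction term into the coefficient $M_2$ multiplying $\|u^+_{\chi}(\cdot,s+t_0)\|_{X^{\frac34}}$ via \eqref{new-eq2} instead of keeping a separate constant term $C_{\frac34}M_0M_1$ inside the convolution, which is the cleaner bookkeeping.
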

\begin{proof}
Observe from \eqref{eq-f-0-02} that for every $t>0$ and $t_0\in\R$, $u^+_{\chi}(\cdot,t+t_0)$ can be written as
\begin{align}\label{new-eq4}
u^+_{\chi}(\cdot, t+t_0)=&T(t)u^+_{\chi}(\cdot,t_0)-\chi\int_{0}^tT(t-s)(\nabla u^+_{\chi}\cdot\nabla v^+_{\chi})(s+t_0)ds\cr
& +\int_{0}^t T(t-s)\Big((a+1-\chi\lambda v^+_{\chi}-(b-\chi\mu)u^+_{\chi})u^+_{\chi}\Big)(s+t_0)ds.
\end{align}
Note from \cite[Lemma 2.2]{SaSh_6_I} and \eqref{bound for positive sol} ,  that $\|\nabla v^+_{\chi}(\cdot,t+t_0)|_{\infty}\leq \frac{\mu\sqrt{N}}{\sqrt{\lambda}}\|u^+_{\chi}(\cdot,t+t_0)\|_{\infty}\leq \frac{\mu\sqrt{N}}{\sqrt{\lambda}} M_0$. Thus, it follows from \eqref{new-eq3} and \eqref{new-eq1} that
\begin{align*}
& \|\chi\int_{0}^tT(t-s)(\nabla u^+_{\chi}\cdot\nabla v^+_{\chi})(s+t_0)ds\|_{X^{\frac{3}{4}}}\cr
\leq &\frac{\chi \mu \sqrt{N}C_{\frac{3}{4}}M_0}{\sqrt{\lambda}}\int_0^{t}\frac{e^{-(t-s)}}{(t-s)^{\frac{3}{4}}}\|\nabla u^+_{\chi}(\cdot,s+t_0)\|_{\infty}ds\cr
 \leq & \frac{\chi \mu N\Gamma(\frac{1}{4}) C_{\frac{3}{4}}M_0}{\sqrt{\pi\lambda}\Gamma(\frac{3}{4})}\int_0^{t}\frac{e^{-(t-s)}}{(t-s)^{\frac{3}{4}}}\|\nabla u^+_{\chi}(\cdot,s+t_0)\|_{\infty}ds.
\end{align*}
Similarly since $ \lambda\|v^+_{\chi}(\cdot,t+t_\tau)\|_{\infty}\leq \chi\mu \|u^+_{\chi}(\cdot,t+t_\tau)\|_{\infty}\leq \chi\mu M_{0}$, using \eqref{new-eq2} and \eqref{new-eq3},  we obtain
\begin{align*}
&\|\int_{0}^t T(t-s)\Big((a+1-\chi\lambda v^+_{\chi}-(b-\chi\mu)u^+_{\chi})u^+_{\chi}\Big)(\cdot,s+t_0)ds \|_{X^{\frac{3}{4}}}\cr
\leq & C_{\frac{3}{4}}\Big(a_{\sup}+1+\chi\lambda \sup_{\tau}\|v^{+}_{\chi}(\cdot,\tau)\|_{\infty}+(b_{\sup}-\chi\mu)\sup_{\tau}\|u^+_{\chi}(\cdot,\tau)\|_{\infty}\Big)
\int_0^t\frac{e^{-(t-s)}}{(t-s)^{\frac{3}{4}}}\|u^+_{\chi}(\cdot,s+t_0)\|_{\infty}ds\cr
\leq&  C_{\frac{3}{4}}\Big(2a_{\sup}+1+\chi\mu M_0\Big)\int_0^t\frac{e^{-(t-s)}}{(t-s)^{\frac{3}{4}}}\|u^+_{\chi}(\cdot,s+t_0)\|_{\infty}ds.
\end{align*}
Therefore, we have from \eqref{new-eq4} that
\begin{equation*}
\|e^tu^+_{\chi}(\cdot,t+t_0)\|_{X^{\frac{3}{4}}}\leq C_{\frac{3}{4}}M_{0}t^{-\frac{3}{4}}+\underbrace{ C_{\frac{3}{4}}\Big(\frac{\chi \mu N\Gamma(\frac{1}{4})}{\sqrt{\pi\lambda}\Gamma(\frac{3}{4})}M_0+M_1 \Big)}_{:=M_2}\int_0^t\frac{e^{s}\|u^+_{\chi}(\cdot,s+t_0)\|_{\infty}}{(t-s)^{\frac{3}{4}}}ds.
\end{equation*}
Therefore, it follows from \cite[Theorem 3.1.1]{Herbert_Amann} that there is $C>0$ such that
\begin{align*}
\|e^tu^+_{\chi}(\cdot,t+t_0)\|_{X^{\frac{3}{4}}}\leq & C_{\frac{3}{4}}M_0t^{-\frac{3}{4}}\big(1+CM_2t^{\frac{1}{4}} \big)e^{2t\big(\Gamma(\frac{1}{4})M_2\big)^4}.
\end{align*}
Combining this with \eqref{new-eq1}, we obtain \eqref{gradient estimate of positive entire solution}. The Lemma is thus proved.

\end{proof}

\begin{rk}\label{new-rk1}
It follows from Lemma \ref{bound for gradient of positive entire solution} that
$$
\|\nabla u^{+}_{\chi}(\cdot,t)\|_{\infty}=\|\nabla u^+_{\chi}(\cdot,1+(t-1))\|_{\infty}\leq C_{\frac{3}{4}}\frac{\sqrt{N}\Gamma(\frac{1}{4})}{\sqrt{\pi}\Gamma(\frac{3}{4})}M_0e^{-1}\big(1+CM_2 \big)e^{2\big(\Gamma(\frac{1}{4})M_2\big)^4}, \quad  \ \forall t\in\R,
$$
where $C$, $M_0$, $M_1$ and $M_2$ are given by Lemma \ref{bound for gradient of positive entire solution} whenever $(u^+_{\chi}(x,t),v^+_{\chi}(x,t))$ is a positive entire solution of \eqref{P}. Therefore, by setting
\begin{equation}\label{eq-f-0-06}
C_0(\chi):=\sup\{\|\nabla u^{+}_{\chi}(\cdot,t)\|_{\infty}, t\in \R, \ (u^+_{\chi}(x,t),v^+_{\chi}(x,t)) \ \text{is a positive entire solution of \eqref{P}} \},
\end{equation}
we have that $C_0(\chi)<\infty$ for every $0<\chi<\frac{b_{\inf}}{\mu}$. Moreover taking $C_1(\chi)=1+\frac{ C_0(\chi)\sqrt{N}}{u^+_{\chi\inf}\sqrt{\lambda}}$, it follows from \eqref{attracting-rect-eq0} that
\begin{equation*}
\lim_{\chi\to0^+}\frac{\chi\mu C_1(\chi) u^+_{\chi \sup} }{(b_{inf}-\chi\mu)u^+_{\chi \inf}}=0,
\end{equation*}
for any positive entire solution $(u^+_{\chi}(x,t),v^+_{\chi}(x,t))$ of \eqref{P}. Thus, we introduce the following definition
\begin{equation}\label{chi_0 def}
\chi_0:=\sup\{\chi\in( 0, \frac{b_{\inf}}{\mu})\ :\ \forall\  0<\tilde{\chi}<\chi, \ \exists\ (u^+_{\tilde{\chi}},v^+_{\tilde\chi}) \ \text{satisfying}\  \frac{\tilde{\chi}\mu C_{1}(\tilde{\chi})u^+_{\tilde{\chi}\sup}}{(b_{\inf}-\tilde{\chi}\mu)u^+_{\tilde{\chi} \inf}}< 1 \  \}.
\end{equation}
\end{rk}

\begin{lem}
 For given $u_0\in C_{\rm unif}^{b}(\R^N)$  and positive entire solution  $(u^+_{\chi}(x,t),v^+_{\chi}(x,t))$ of \eqref{P} we let
$$
U(x,t+t_0;t_0,u_0):=\frac{u(x,t+t_0;t_0,u_0)}{u_{\chi}^+(x,t+t_0)} \quad \text{and} \quad V(x,t+t_0;t_0,u_0):=\frac{v(x,t+t_0;t_0,u_0)}{v_{\chi}^+(x,t+t_0)}.
$$
Then $U(x,t+t_0;t_0,u_0)$ satisfies
\begin{equation}\label{f-1}
U_t=\Delta U +\nabla U\cdot \nabla(2\ln(u^+_{\chi})-\chi v)+ \chi  \Big(\lambda (v^+_{\chi}-v)+\nabla \ln (u^+_{\chi})\cdot \nabla(v^+_{\chi}-v) \Big)U+(b-\chi\mu)u^+_{\chi}U(1-U).
\end{equation}
In particular, if $u^+_{\chi}(x,t)=u^+_{\chi}(t)$, that is,  $u^+_{\chi}$ is space independent, then
\begin{equation}\label{f-1'}
U_t=\Delta U -\chi\nabla U\cdot \nabla v+ \Big(\chi\lambda (1-V)U+(b-\chi\mu)U(1-U)\Big)u_{\chi}^+(t).
\end{equation}
\end{lem}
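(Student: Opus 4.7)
The plan is to derive \eqref{f-1} by direct computation. I would first rewrite the parabolic equation in \eqref{P} so as to eliminate the second derivative of $v$ from the chemotaxis divergence: expanding $\nabla\cdot(u\nabla v)=\nabla u\cdot\nabla v+u\Delta v$ and substituting $\Delta v=\lambda v-\mu u$ from the elliptic equation in \eqref{P}, the $u$-equation takes the reaction-drift form
\begin{equation*}
u_t=\Delta u-\chi\nabla u\cdot\nabla v+u\bigl(a-\chi\lambda v-(b-\chi\mu)u\bigr),
\end{equation*}
and $(u_\chi^+,v_\chi^+)$ satisfies the analogous identity with $u,v$ replaced by $u_\chi^+,v_\chi^+$. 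This step is what converts the parabolic-elliptic chemotaxis system into a single semilinear equation amenable to the ratio substitution.

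Next I would substitute $u=Uu_\chi^+$ and apply the product rule, which yields $\nabla u=U\nabla u_\chi^++u_\chi^+\nabla U$ and $\Delta u=U\Delta u_\chi^++2\nabla U\cdot\nabla u_\chi^++u_\chi^+\Delta U$. Plugging these into the rewritten $u$-equation, subtracting $U$ times the corresponding identity for $u_\chi^+$, and collecting like terms causes the $U\Delta u_\chi^+$ and $aUu_\chi^+$ contributions to cancel; the linear-in-$v$ reaction produces $\chi\lambda Uu_\chi^+(v_\chi^+-v)$, the quadratic reaction factors as $(b-\chi\mu)(u_\chi^+)^2 U(1-U)$, and the chemotaxis drift contributes $-\chi u_\chi^+\nabla U\cdot\nabla v$ together with $\chi U\nabla u_\chi^+\cdot(\nabla v_\chi^+-\nabla v)$. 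Dividing through by $u_\chi^+>0$ (strict positivity is what makes $U$ well-defined and smooth) and using $\nabla u_\chi^+/u_\chi^+=\nabla\ln u_\chi^+$ collapses the drift into $\nabla U\cdot\nabla(2\ln u_\chi^+-\chi v)$ and delivers \eqref{f-1}.

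The specialization \eqref{f-1'} then follows from the observation that when $u_\chi^+=u_\chi^+(t)$ depends only on time, the unique bounded solution of the elliptic equation $\Delta v_\chi^+-\lambda v_\chi^++\mu u_\chi^+=0$ is $v_\chi^+(t)=(\mu/\lambda)u_\chi^+(t)$, which is also space-independent. Therefore $\nabla u_\chi^+\equiv 0$ and $\nabla v_\chi^+\equiv 0$, so the $\nabla\ln u_\chi^+$-terms in \eqref{f-1} vanish, and using $v_\chi^+-v=v_\chi^+(1-V)$ together with $\lambda v_\chi^+=\mu u_\chi^+$ one may factor $u_\chi^+(t)$ out of the reaction cluster, yielding \eqref{f-1'}. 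The derivation has no conceptual obstacle; the only care required is bookkeeping the cancellations on subtraction, in particular that the quadratic reaction splits cleanly as $U(1-U)$ and that the chemotactic drift regroups as the gradient of $2\ln u_\chi^+-\chi v$.
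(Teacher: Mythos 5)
Your computation is correct and essentially the paper's own argument: the paper differentiates the quotient $U=u/u_{\chi}^{+}$ directly and then uses the elliptic equations to expand $\nabla\cdot(u\nabla v)-U\nabla\cdot(u^+_{\chi}\nabla v^+_{\chi})$, whereas you eliminate $\Delta v$ first and then substitute $u=Uu^+_{\chi}$ — the same algebra in a different order. Note only that in the space-independent case your own bookkeeping (using $\lambda v^+_{\chi}=\mu u^+_{\chi}$) actually yields the term $\chi\mu(1-V)Uu^+_{\chi}(t)$ rather than the stated $\chi\lambda(1-V)Uu^+_{\chi}(t)$; this is consistent with \eqref{f-5'}--\eqref{f-6'} and points to a typo in \eqref{f-1'}, not to a gap in your argument.
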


\begin{proof} We have that
\begin{equation*}
\begin{split}
U_t=&\frac{1}{(u_{\chi}^+)^2}\Big(u^+_{\chi }\Big(\Delta u -\chi\nabla\cdot(u\nabla v) +(a-b u)u\Big) -u\Big( \Delta u^+_{\chi} -\chi\nabla\cdot(u^+_{\chi}\nabla v^+_{\chi}) +(a-b u^+_{\chi})u^+_{\chi} \Big)  \Big)\cr
=&\frac{1}{u_{\chi}^+}\Big( \Delta u - U\Delta u^+_{\chi} -\chi \Big(\nabla\cdot(u\nabla v)- U\nabla\cdot(u^+_{\chi}\nabla u^+_{\chi}) \Big) \Big) +b u^+_{\chi} U(1-U)\cr
=& \Delta U + 2\nabla U\cdot\nabla \ln(u^+_{\chi})- \frac{\chi}{u_{\chi}^+}\Big(\nabla\cdot(u\nabla v)- U\nabla\cdot(u^+_{\chi}\nabla u^+_{\chi}) \Big) +b u^+_{\chi} U(1-U).
\end{split}
\end{equation*}
On the other, we have
\begin{equation*}
\begin{split}
\nabla\cdot(u\nabla v)-U\nabla\cdot(u^+_{\chi}\nabla v^+_{\chi})
=& \nabla u\cdot\nabla v +Uu^+_{\chi}\Delta v -U\nabla u^+_{\chi}\cdot\nabla v^+_{\chi}-U u^+_{\chi}\Delta v^+ \cr
=& Uu^+_{\chi}\Delta(v-v^+_{\chi})+U\nabla u^+_{\chi}\cdot\nabla(v-v^+_{\chi}) +u^+_{\chi}\nabla U\cdot\nabla v\cr
=& \lambda Uu_{\chi}^+(v-v^+_{\chi}) +\mu(u^+_{\chi})^2U(1-U) +U\nabla u^+_{\chi}\cdot\nabla(v-v^+_{\chi})\cr
&\,  +u^+_{\chi}\nabla U\cdot\nabla v.
\end{split}
\end{equation*}
Hence, \eqref{f-1} holds. \eqref{f-1'} follows directly from \eqref{f-1}.
\end{proof}

We note that,  to show the stability of the positive entire solution $u^+_{\chi}(x,t)$, it is enough to show that $\|U(\cdot,t+t_0;t_0,u_0)-1\|_{\infty}\to 0$ as $t\to\infty$. We first prove the following Theorem, which will be useful for the proof of our main result in this section.

\begin{tm}\label{New-asymp-lem 1}
For every $\varepsilon>0$,  $u_0\in C^b_{\rm unif}(\R^N)$ satisfying $0< u_{0\inf}\leq u_{0\sup}\leq \frac{a_{\sup}}{b_{\inf}-\chi\mu}$, and $n\geq 1$,  there is $T_{\varepsilon,n}>0$  such that
\begin{equation}\label{f-2}
\|U(\cdot,t+t_0;t_0,u_0)-1\|_{\infty}\leq \Big(\frac{\chi\mu C_{1}(\chi)u^+_{\sup}}{(b_{\inf}-\chi\mu)u^+_{\chi \inf}}\Big)^{n}\frac{a_{\sup}}{(b_{\inf}-\chi\mu)u^+_{\chi \sup}} +\varepsilon , \quad \forall\ t\geq T_{n,\varepsilon}, \ t_{0}\in\R,
\end{equation} where $C_1(\chi):=1 +\frac{ C_0(\chi) \sqrt{N}}{u^+_{\chi\inf}\sqrt{\lambda}}$ and $C_0$ is given by \eqref{eq-f-0-06}. Furthermore, if $u_{\chi}^{+}(x,t)=u^+_{\chi}(t)$ is space independent, then $T_{\varepsilon,n}$ can be chosen such that
\begin{equation}\label{f-2'}
\|U(\cdot,t+t_0;t_0,u_0)-1\|_{\infty}\leq \Big(\frac{\chi\mu}{b_{\inf}-\chi\mu}\Big)^{n}\frac{a_{\sup}}{(b_{\inf}-\chi\mu)u^+_{\chi \inf}} +\varepsilon , \quad \forall\ t\geq T_{n,\varepsilon}, \ t_{0}\in\R.
\end{equation}
\end{tm}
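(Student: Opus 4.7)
The plan is to induct on $n\geq 1$, applying an eventual parabolic comparison to the equation \eqref{f-1} for $U=u/u^+_\chi$. The key structural observation is that $v^+_\chi-v$ solves $(\Delta-\lambda I)(v^+_\chi-v)=-\mu(u^+_\chi-u)$, which via \cite[Lemma 2.2]{SaSh_6_I} yields the a priori estimates $\lambda\|v^+_\chi(\cdot,t)-v(\cdot,t)\|_\infty\le \mu\|u^+_\chi(\cdot,t)-u(\cdot,t)\|_\infty$ and $\|\nabla(v^+_\chi-v)(\cdot,t)\|_\infty\le \tfrac{\mu\sqrt{N}}{\sqrt\lambda}\|u^+_\chi(\cdot,t)-u(\cdot,t)\|_\infty$. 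Writing $u^+_\chi-u=u^+_\chi(1-U)$ and using $\|\nabla\ln u^+_\chi\|_\infty\le C_0(\chi)/u^+_{\chi\inf}$ from Remark \ref{new-rk1}, the bracketed $U$-coefficient in \eqref{f-1} is bounded pointwise by $\mu C_1(\chi)u^+_{\chi\sup}\|1-U\|_\infty$, while the quadratic term $(b-\chi\mu)u^+_\chi U(1-U)$ supplies damping at rate at least $(b_\inf-\chi\mu)u^+_{\chi\inf}$ once $U$ is bounded below by a positive constant. The ratio of these two quantities is exactly the contraction factor $q:=\chi\mu C_1(\chi)u^+_{\chi\sup}/((b_\inf-\chi\mu)u^+_{\chi\inf})$ appearing in \eqref{f-2}.

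For the base case $n=1$, I set $A_0:=a_\sup/((b_\inf-\chi\mu)u^+_{\chi\sup})$ and appeal to Theorem \ref{Main-thm1}(i) together with Remark \ref{new-rk0} to obtain a uniform (in $t_0\in\R$) eventual lower bound on $u$ and the upper bound $\|u(\cdot,t+t_0;t_0,u_0)\|_\infty\le a_\sup/(b_\inf-\chi\mu)$, which together give a uniform eventual bound on $\|1-U\|_\infty$. I then split $U-1$ into positive and negative parts $W_\pm:=(\pm(U-1))_+$, rewrite \eqref{f-1} as a parabolic inequality for each $W_\pm$, absorb the drift $\nabla(2\ln u^+_\chi-\chi v)$ through the strong maximum principle, and compare against the ODE $\dot z=-(b_\inf-\chi\mu)u^+_{\chi\inf}z+\chi\mu C_1(\chi)u^+_{\chi\sup}A_0$, whose stable equilibrium is $qA_0$. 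After a waiting time $T_{1,\varepsilon}$ chosen to dominate both the transient of the ODE and the persistence time supplying the damping lower bound, I obtain $\|U-1\|_\infty\le qA_0+\varepsilon$, uniformly in $t_0$.

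The inductive step is identical in structure: assuming that for a small tolerance $\varepsilon_n$ one has $\|U-1\|_\infty\le q^nA_0+\varepsilon_n$ eventually, I feed this into the source-term bound and repeat the comparison against the analogous ODE with forcing $\chi\mu C_1(\chi)u^+_{\chi\sup}(q^nA_0+\varepsilon_n)$, which yields $\|U-1\|_\infty\le q^{n+1}A_0+q\varepsilon_n+o(1)$ after an additional waiting time. Choosing $\varepsilon_n$ so that $q\varepsilon_n$ plus the transient decay stays below $\varepsilon$ closes the induction and delivers \eqref{f-2}. In the space-independent case $u^+_\chi(x,t)=u^+_\chi(t)$, the gradient $\nabla\ln u^+_\chi$ vanishes and $v^+_\chi=\mu u^+_\chi/\lambda$ is also space independent, so $|1-V|\le\|1-U\|_\infty$ directly; the source-term bound improves to $\chi\mu u^+_\chi\|1-U\|_\infty$ (no $C_1$ factor, and with $u^+_{\chi\sup}/u^+_{\chi\inf}$ replaced by $1$), so the same induction yields the sharper contraction factor $\chi\mu/(b_\inf-\chi\mu)$ and the bound \eqref{f-2'}.

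The main obstacle is the \emph{eventual} character of the comparison: the damping rate $(b_\inf-\chi\mu)u^+_{\chi\inf}$ is only accessible once $U$ is known to be uniformly bounded below, which in turn relies on the pointwise persistence result of \cite{SaSh_6_I} and hence requires a waiting time that depends on $u_0$. At each step of the induction I must therefore simultaneously track a waiting time $T_{n,\varepsilon}$ and a tolerance $\varepsilon_n$ chosen in advance so that transient errors telescope into a single $\varepsilon$ at the end. Ensuring that all of these choices remain uniform in $t_0\in\R$, and that the induction produces the precise constant $q^nA_0$ rather than merely a qualitative $\|U-1\|_\infty\to 0$, is the delicate book-keeping; note in particular that a direct classical maximum principle is not available since the drift $\nabla(2\ln u^+_\chi-\chi v)$ is only known to be bounded, so one must work with the $W_\pm$ comparison through the parabolic inequality.
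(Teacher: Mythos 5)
Your overall strategy (iterative ``eventual'' comparison with an ODE, using the estimates of \cite[Lemma 2.2]{SaSh_6_I}, the gradient bound $C_0(\chi)$, and an induction that feeds the step-$n$ bound back into the perturbation term) is the same as the paper's. But there is a genuine gap in the comparison device itself: in \eqref{f-1} the perturbation term is $\chi\, U\big(\lambda(v^+_\chi-v)+\nabla\ln u^+_\chi\cdot\nabla(v^+_\chi-v)\big)$ and the damping term is $(b-\chi\mu)u^+_\chi\, U(1-U)$ --- both carry the factor $U$. Your linear barrier $\dot z=-(b_{\inf}-\chi\mu)u^+_{\chi\inf}\,z+\chi\mu C_1(\chi)u^+_{\chi\sup}A_0$ implicitly sets that factor equal to $1$ in both places, and the two requirements conflict: on the support of $W_+=(U-1)_+$ you indeed have $U\ge 1$, which gives the claimed damping rate, but the very same factor $U$ (which can exceed $1$; eventually only $U\le a_{\sup}/\big((b_{\inf}-\chi\mu)u^+_{\chi\inf}\big)$ is known at the base step) amplifies the forcing, so the correct zeroth-order bound is $U\big[\chi\mu C_1(\chi)u^+_{\chi\sup}A_0-(b_{\inf}-\chi\mu)u^+_{\chi\inf}W_+\big]$, whose linear-ODE supersolution has equilibrium $qA_0\sup U$, not $qA_0$. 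Symmetrically, on the support of $W_-=(1-U)_+$ the forcing is not amplified, but the damping coefficient is $(b-\chi\mu)u^+_\chi U$ and is only bounded below by $(b_{\inf}-\chi\mu)u^+_{\chi\inf}\inf U$, where the lower bound on $U$ comes from pointwise persistence, depends on $u_0$, and is typically $<1$; this inflates the equilibrium to $qA_0/\inf U$. Hence the base case yields at best $qA_0\max\{\sup U,\,1/\inf U\}+\varepsilon$, and the induction does not close on the precise constant $q^nA_0=\Big(\frac{\chi\mu C_1(\chi)u^+_{\chi\sup}}{(b_{\inf}-\chi\mu)u^+_{\chi\inf}}\Big)^n\frac{a_{\sup}}{(b_{\inf}-\chi\mu)u^+_{\chi\sup}}$ claimed in \eqref{f-2}; since this exact ratio $q$ is what is later required to be $<1$ (condition $(\tilde H)$ defining $\chi_0$), the loss is not cosmetic. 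The same remark applies to your sketch of the space-homogeneous case \eqref{f-2'}.

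The fix is exactly the paper's choice of comparison functions: do not split off $(U-1)_\pm$ with a linear ODE, but compare $U$ itself (via \eqref{f-5}--\eqref{f-6}) with solutions of
\begin{equation*}
\frac{d\overline U}{dt}=\beta\,\overline U+(b_{\inf}-\chi\mu)u^+_{\chi\inf}\,\overline U(1-\overline U),
\qquad
\frac{d\underline U}{dt}=-\beta\,\underline U+(b_{\inf}-\chi\mu)u^+_{\chi\inf}\,\underline U(1-\underline U),
\end{equation*}
where $\beta$ is the current bound on $\chi\big\|\lambda(v-v^+_\chi)+\nabla\ln u^+_\chi\cdot\nabla(v-v^+_\chi)\big\|_\infty$. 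Because the factor $U$ multiplies the forcing and the logistic damping alike, it cancels in the equilibria $1\pm\beta/\big((b_{\inf}-\chi\mu)u^+_{\chi\inf}\big)$, which is precisely how the constant $q^nA_0$ is produced at each step. This also removes your reliance on the persistence-based eventual lower bound for the negative part: the logistic subsolution started at $\min\{u_{0\inf}/u^+_{\chi\sup},1\}>0$ already stays positive and converges to $(1-\beta/\gamma)_+$, so the only $u_0$-dependence sits in the waiting times $T_{n,\varepsilon}$, uniformly in $t_0\in\R$, and the tolerance bookkeeping you describe then goes through as in the paper.
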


\begin{proof} The proof of this theorem is divided in two parts. In the first part, we shall give the proof of the general case. Next, in the second part, we consider the proof of the particular cases.

Since, by \cite[Lemma 2.2]{SaSh_6_I}, $
\|\nabla(v-v^+_{\chi})(\cdot,t+t_0;t_0,u_0) \|_{\infty}\leq \frac{\mu\sqrt{N}}{\sqrt{\lambda}} \|(u-u^+_{\chi})(\cdot,t+t_0;t_0,u_0)\|_{\infty}$ and $\|\lambda(v-v^+_{\chi})(\cdot,t+t_0;t_0,u_0) \|_{\infty} \leq  \mu \|(u-u^+_{\chi})(\cdot,t+t_0;t_0,u_0)\|_{\infty} $ for every $t\geq 0$,  we have  from Remark \ref{new-rk1} that
\begin{align}\label{f-3}
&\|(\lambda (v-v^+_{\chi})+\nabla \ln(u^+_{\chi})\cdot \nabla(v-v^+_{\chi}))(\cdot,t+t_0;t_0,u_0)\|_{\infty}\nonumber\\
\leq &  \underbrace{\Big(1 +\frac{ C_0(\chi) \sqrt{N}}{u^+_{\chi\inf}\sqrt{\lambda}}\Big)}_{=C_1(\chi)}\mu \|(u-u^+_{\chi})(\cdot,t+t_0;t_0,u_0)\|_{\infty} \nonumber\\
\leq &\mu C_1(\chi)u^+_{\chi \sup}(t+t_0) \|(U-1)(\cdot,t+t_0;t_0,u_0)\|_{\infty}, \forall\ t\geq 0,
\end{align}
where $C_0(\chi)$ is given by \eqref{eq-f-0-06}. Observe that from Theorem \ref{global-existence-tm} (i) and Theorem \ref{existence-entire-sol} (i) that
$$
\|u(\cdot,t+t_0;t_0,u_0)-u^+_{\chi}\|_{\infty}\leq \frac{a_{\sup}}{b_{\inf}-\chi\mu}, \quad \forall t\geq 0, \ \forall \ t_0\in\R
$$
Thus it follows from the first inequality in \eqref{f-3} that
$$
\|(\lambda (v-v^+_{\chi})+\nabla \ln(u^+_{\chi})\cdot \nabla(v-v^+_{\chi}))(\cdot,t+t_0;t_0,u_0)\|_{\infty} \leq \frac{\mu C_1(\chi) a_{\sup}}{b_{\inf}-\chi\mu}, \quad \forall t\geq 0, \ \forall \ t_0\in\R.
$$
This combined with  \eqref{f-1} yields that
\begin{equation}\label{f-5}
U_t\leq \Delta U +\nabla U\cdot \nabla(2\ln(u^+_{\chi})-\chi v) + \frac{\chi\mu C_{1}(\chi)a_{\sup}}{b_{\inf}-\chi\mu}U + (b-\chi\mu)u^+_{\chi}U(1-U),
\end{equation}
and
\begin{equation}\label{f-6}
U_t\geq \Delta U +\nabla U\cdot \nabla(2\ln(u^+_{\chi})-\chi v) - \frac{\chi\mu C_{1}(\chi)a_{\sup}}{b_{\inf}-\chi\mu}U + (b-\chi\mu)u^+_{\chi}U(1-U).
\end{equation}

Let $\underline{U}_{1}(t)$  denotes the solutions of the ODE
$$
\begin{cases}
\frac{d \underline{U}}{dt}=-\frac{\chi\mu C_{1}(\chi)a_{\sup}}{b_{\inf}-\chi\mu}\underline{U} + (b_{\inf}-\chi\mu)u^+_{\chi \inf}\underline{U}(1-\underline{U})\cr
\underline{U}(0)=\min\{\frac{u_{0\inf}}{u^+_{\chi \sup}},1\}
\end{cases}
$$ and
$\overline{U}_{1}(t)$  denotes the solutions of the ODE
$$
\begin{cases}
\frac{d \overline{U}}{dt}=\frac{\chi\mu C_{1}(\chi)a_{\sup}}{b_{\inf}-\chi\mu}\overline{U} +(b_{\inf}-\chi\mu)u^+_{\chi \inf}\overline{U}(1-\overline{U})\cr
\overline{U}(0)=\max\{\frac{u_{0 \sup}}{u^+_{\chi \inf}},\frac{(b_{\inf}-\chi\mu)u^+_{\chi \inf}+\frac{\chi\mu C_{1}(\chi)a_{\sup}}{b_{\inf}-\chi\mu}}{(b_{\inf}-\chi\mu)u^+_{\chi \inf}}\}.
\end{cases}
$$
Thus, it follows from comparison principle for ODE's that
\begin{equation}\label{f-7}
 \overline{U}_{1}(t)\geq 1+ \frac{\frac{\chi\mu C_{1}(\chi)a_{\sup}}{b_{\inf}-\chi\mu}}{(b_{\inf}-\chi\mu)u^+_{\chi \inf}}\quad \text{and}\quad 0< \underline{U}_{1}(t)\leq 1 \quad \forall\ t\geq 0.
\end{equation}
Furthermore, it holds that
\begin{equation}\label{f-8}
\lim_{t\to\infty}\underline{U}_{1}(t)=\Big(1- \frac{\frac{\chi\mu C_{1}(\chi)a_{\sup}}{b_{\inf}-\chi\mu}}{(b_{\inf}-\chi\mu)u^+_{\chi \inf}}\Big)_+ \quad \text{and}\quad \lim_{t\to\infty}\overline{U}_{1}(t)=1+\frac{\frac{\chi\mu C_{1}(\chi)a_{\sup}}{b_{\inf}-\chi\mu}}{(b_{\inf}-\chi\mu)u^+_{\chi \inf}}.
\end{equation}
 We claim  that
\begin{equation}\label{f-9}
\underline{U}_{1}(t)\leq U(x,t+t_0;t_0,u_0)\leq \overline{U}_{1}(t), \quad \forall\ x\in\R,\forall\ t\geq 0, \ \forall\ t_0\in\R.
\end{equation}
Indeed, by setting $$\mathcal{L}_{1}^+(U):=\Delta U +\frac{\chi\mu C_{1}(\chi)a_{\sup}}{b_{\inf}-\chi\mu}U+(b(x,t+t_0)-\chi\mu)u^{+}_{\chi}U(1-U)$$ and $$\mathcal{L}_2^-(U):=\Delta U -\frac{\chi\mu C_{1}(\chi)a_{\sup}}{b_{\inf}-\chi\mu}U+(b(x,t+t_0)-\chi\mu)u^{+}_{\chi}U(1-U),$$ it follows from \eqref{f-7} that
\begin{equation}\label{f-10}
\frac{d \overline{U}_1}{dt}-\mathcal{L}_1^+(\overline{U}_1)=((b_{\inf}-\chi)u_{\chi \inf}^+-(b(x,t+t_0)-\chi\mu)u^+_{\chi})\overline{U}_1(1-\overline{U}_1)\geq 0
\end{equation}
and
\begin{equation}\label{f-11}
\frac{d \underline{U}_1}{dt}-\mathcal{L}_1^-(\underline{U}_1)=((b_{\inf}-\chi)u_{\chi \inf}^+-(b(x,t+t_0)-\chi\mu)u^+_{\chi})\underline{U}_1(1-\underline{U}_1)\leq 0.
\end{equation}
Therefore, using \eqref{f-5}, \eqref{f-6}, \eqref{f-10}, \eqref{f-11}, and comparison principle for parabolic equations, we deduce that \eqref{f-9} holds. Thus, it follows from   \eqref{f-8} and \eqref{f-9} that  for every $\varepsilon>0$  there is $T_{1,\varepsilon} \gg 1$ such that
$$
1-\frac{\frac{\chi\mu C_{1}(\chi)a_{\sup}}{b_{\inf}-\chi\mu}}{(b_{\inf}-\chi\mu)u^+_{\chi \inf}}-\varepsilon\leq U(x,t+t_0;t_0,u_0)\leq 1+\frac{\frac{\chi\mu C_{1}(\chi)a_{\sup}}{b_{\inf}-\chi\mu}}{(b_{\inf}-\chi\mu)u^+_{\chi \inf}}+\varepsilon, \quad \forall\ t\geq T_{1,\varepsilon}, \forall\ x\in\R^N, \ t_0\in\R,
$$
which is equivalent to
$$
\|U(\cdot,t+t_0;t_0,u_0)-1\|_{\infty}\leq \frac{\frac{\chi\mu C_{1}(\chi)a_{\sup}}{b_{\inf}-\chi\mu}}{(b_{\inf}-\chi\mu)u^+_{\chi \inf}}+\varepsilon, \quad \forall\ t\geq T_{1,\varepsilon}, \ \forall\ t_0\in\R.
$$
This completes the proof of \eqref{f-2} for the case $n=1$. Next, let us suppose as induction hypothesis that \eqref{f-2} holds up some $n\geq 1$. We show that \eqref{f-2} also holds for $n+1$. Indeed, using the last inequality in \eqref{f-3}, we may suppose that for every $0<\tilde{\varepsilon}\ll 1$,
\begin{equation}\label{f-3'}
\begin{split}
&\|(\lambda (v-v^+_{\chi})+\nabla \ln(u^+_{\chi})\nabla(v-v^+_{\chi}))(\cdot,t+t_0;t_0,u_0)\|_{\infty}\cr
\leq & C_1(\chi)\mu u^+_{\chi \sup}(t+t_0)\|(U-1)(\cdot,t+t_0;t_0,u_0)\|_{\infty}\cr
\leq &\mu C_1(\chi) u^+_{\chi \sup}(t+t_0)\Big(\frac{\chi\mu C_{1}(\chi)u^+_{\sup}}{(b_{\inf}-\chi\mu)u^+_{\chi \inf}}\Big)^{n}\frac{a_{\sup}}{(b_{\inf}-\chi\mu)u^+_{\chi \sup}} +\tilde{\varepsilon}, \quad \forall\ t\geq  T_{n,\tilde \varepsilon}, \ x\in\R^N,\ t_0\in\R,
\end{split}
\end{equation}
for some $T_{n,\tilde \varepsilon}\gg 1 $. Therefore, similar arguments as in the case of $n=1$ from \eqref{f-5} to \eqref{f-11} by replacing  the expression $ \frac{\mu C_1(\chi) a_{\sup}}{b_{\inf}-\chi\mu} $ with $ \mu C_1(\chi) u^+_{\chi \sup}\Big(\frac{\chi\mu C_{1}(\chi)u^+_{\sup}}{(b_{\inf}-\chi\mu)u^+_{\chi \inf}}\Big)^{n}\frac{a_{\sup}}{(b_{\inf}-\chi\mu)u^+_{\chi \sup}} +\tilde{\varepsilon}$,  yield that
\begin{equation*}
\begin{split}
\|U(\cdot,t+t_0;t_0,u_0)-1\|_{\infty}\leq & \frac{\chi C_1(\chi)\mu u^+_{\chi \sup}\Big(\frac{\chi\mu C_{1}(\chi)u^+_{\sup}}{(b_{\inf}-\chi\mu)u^+_{\chi \inf}}\Big)^{n}\frac{a_{\sup}}{(b_{\inf}-\chi\mu)u^+_{\chi \sup}}+ \tilde{\varepsilon}}{(b_{\inf}-\chi\mu)u^+_{\chi \inf}}+ \tilde{\varepsilon} \cr
=& \frac{a_{\sup}\Big(\frac{\chi\mu C_{1}(\chi)u^+_{\sup}}{(b_{\inf}-\chi\mu)u^+_{\chi \inf}}\Big)^{n+1}}{(b_{\inf}-\chi\mu)u^+_{\chi \sup}} + \left(1+\frac{1}{(b_{\inf}-\chi\mu)u^+_{\chi\inf}}\right)\tilde{\varepsilon}, \quad \forall\ t\geq T_{n+1,\tilde{\varepsilon}}, \ \forall\ t_0\in\R,
\end{split}
\end{equation*}
for some $T_{n+1,\tilde{\varepsilon}}\gg 1$.  Which shows that \eqref{f-2} also holds for n+1.

If $u_{\chi}^+(x,t)=u_{\chi}^+(t)$, then using \eqref{f-1'} instead of \eqref{f-1} in the proof of the general case given above,  \eqref{f-5} and \eqref{f-6} become
\begin{equation}\label{f-5'}
U_t\leq \Delta U -\chi\nabla U\nabla v + \Big(\chi\mu\|V-1\|_{\infty}  U + (b-\chi\mu)U(1-U)\Big)u^+_{\chi}(t),
\end{equation}
and
\begin{equation}\label{f-6'}
U_t\geq \Delta U -\chi\nabla U\nabla v + \Big(-\chi\mu\|V-1\|_{\infty}  U + (b-\chi\mu)U(1-U)\Big)u^+_{\chi}(t).
\end{equation}
Observe that $\|V(\cdot,t+t_0;t_0,u_0)-1\|_{\infty}\leq \frac{a_{\sup}}{(b_{\inf}-\chi\mu)u^+_{\chi\inf}}$ for every $t\geq0$, $t_0\in\R$. Hence, by considering  $\underline{U}_1(t)$ and $\overline{U}_1(t)$ solutions of the ODEs
\begin{equation*}
\begin{cases}
\frac{d\underline{U}}{dt}=\Big( -\chi\mu\frac{a_{\sup}}{(b_{\inf}-\chi\mu)u^+_{\chi\inf}}\underline{U} +(b_{\inf}-\chi\mu)\underline{U}(1-\underline{U}) \Big)u^+_{\chi}(t+t_0)\cr
\underline{U}(0)=\min\{\frac{u_{0\inf}}{u^+_{\chi\sup}}, 1\}
\end{cases}
\end{equation*}
and
\begin{equation*}
\begin{cases}
\frac{d\overline{U}}{dt}=\Big( \chi\mu\frac{a_{\sup}}{(b_{\inf}-\chi\mu)u^+_{\chi\inf}}\overline{U} +(b_{\inf}-\chi\mu)\overline{U}(1-\overline{U}) \Big)u^+_{\chi}(t+t_0)\cr
\overline{U}(0)=\max\{\frac{u_{0\sup}}{u^+_{\chi\inf}}, 1+\Big(\frac{\chi\mu}{b_{\inf}-\chi\mu}\Big)\frac{a_{\sup}}{(b_{\inf}-\chi\mu)u^+_{\chi\inf}}\},
\end{cases}
\end{equation*}
similar arguments as those used in the general case, \eqref{f-7}-\eqref{f-11}, yield that \eqref{f-2'} also holds.  This completes the proof of the Theorem.
\end{proof}

\begin{rk}
We note that the type of comparison principles arguments used in the proof Theorem \ref{New-asymp-lem 1} have been used in the literature to study the stability of constant equilibria solution of some chemotaxis models (see \cite{BlLa2016,SaSh1,StTeWi2014,TeWi2012N}). However, the arguments as presented in these works can not be directly applied to \eqref{P} due to the heterogeneity of the underlying source functions. Hence, a very careful and nontrivial  refinement of the technique is required to handle the stability of the positive entire solutions of \eqref{P} in the general heterogeneous media.

\end{rk}

We now present the proof of Theorem \ref{Stability of positive space homogeneous steady state solution}, which is based on the previous result.

Let $\tilde{U}(x,t;t_0,u_0)=U(x,t;t_0;u_0)-1$ and $\tilde{V}(x,t;t_0,u_0)=V(x,t;t_0,u_0)-1$. Then it follows from \eqref{f-1} that  $\tilde{U}(x,t;t_0,u_0)$ satisfies
\begin{align}\label{h-01}
\begin{split}
\tilde U_t=&\Delta \tilde U+\nabla\tilde U\cdot \nabla(2\ln(u^+_{\chi})-\chi v)-(b(x,t)-\chi\mu)u^+_{\chi}(t)\tilde U\cr
&+ \chi U \Big(\lambda (v-v^+_{\chi})+\nabla \ln(u^+_{\chi})\cdot\nabla(v-v^+_{\chi}) \Big)-(b(x,t)-\chi\mu)u^+_{\chi} \tilde U^2.
\end{split}
\end{align}
Let $\Phi_{\chi}(t,s)$ be the solution operator  in $C^b_{\rm unif}(\R^N)$ of
\begin{equation}\label{h-02}
u_t=\Delta u + \nabla u\cdot \nabla(2\ln(u^+_{\chi})-\chi v)-(b(x,t)-\chi\mu)u^+_{\chi}u.
\end{equation}
Thus, by the comparison principle for parabolic equations, we have
\begin{equation}\label{h-03}
\|\Phi_{\chi}(t,s)\|\leq e^{-(t-s)(b_{\inf}-\chi\mu)u^+_{\chi \inf}}, \quad \forall\ t-s\ge 0.
\end{equation}

\begin{proof}[Proof of Theorem \ref{Stability of positive space homogeneous steady state solution}] We shall give the proof of the general case. The proof of the particular case follows similar arguments. We suppose that $0<\chi<\chi_0$, where $\chi_0$ is given by \eqref{chi_0 def}.  Hence, by definition of $ \chi_0$, there is  a positive entire solution of \eqref{P} $ (u^+_{\chi}(x,t),v^+_{\chi}(x,t))$ satisfying
$$
(\tilde{H}) : \quad \frac{\chi\mu C_{1}(\chi)u^+_{\chi\sup}}{(b_{\inf}-\chi\mu)u^+_{\chi \inf}}<1.
$$

\noindent{\bf Exponential Stability of $(u^+_{\chi}(x,t),v^+_{\chi}(x,t))$:}
By Theorem \ref{New-asymp-lem 1} we may suppose that  there are $t_{n}\gg1 $ with $0<t_{n}<t_{n+1}, $ such that
\begin{equation}\label{h-10}
\|\tilde{U}(\cdot,t+t_0;t_0,u_0)\|_{\infty}=\|U(\cdot,t+t_0;t_0,u_0)-1\|_{\infty}\leq 2\Big( \frac{\chi\mu C_{1}(\chi)u^+_{\sup}}{(b_{\inf}-\chi\mu)u^+_{\chi \inf}}\Big)^{n} ,\quad \forall\ t\geq t_n, \ t_0\in\R.
\end{equation}
By the variation of constant formula, it follows from \eqref{h-01} that for every $ t\geq 0$,
\begin{equation}\label{h-12}
\tilde U(\cdot,t+t_n+t_0;t_0,u_0)= I_{1,n}(t;t_0)+\chi I_{2,n}(t;t_0) - I_{3,n}(t,t_0),
\end{equation}
where
\begin{equation*}\label{I1n eq}
I_{1,n}(t,t_0):=\Phi_{\chi}(t+t_n+t_0;t_n+t_0)\tilde{U}(\cdot,t_n+t_0;t_0,u_0), \forall t\geq 0, \forall\ n\geq 1,
\end{equation*}
\begin{equation*}\label{I2n eq}
I_{2,n}(t,t_0):=\int_{0}^t\Phi_{\chi}(t+t_n+t_0,s+t_n+t_0) \Big( U \big(\lambda (v-v^+_{\chi})+\nabla \ln(u^+_{\chi})\cdot\nabla(v-v^+_{\chi})\big)\Big)(\cdot,s+t_n+t_0) ds,
\end{equation*}
and
\begin{equation*}\label{I3n eq}
I_{3,n}(t,t_0):=\int_0^{t}\Phi_{\chi}(t+t_n+t_0,s+t_n+t_0)(b-\chi\mu)u^+_{\chi}\tilde{U}^2(\cdot,s+t_n+t_0)ds.
\end{equation*}
Next, it follows from \eqref{h-10} and \eqref{h-03}  that for every $n\geq 1$, $t_0\in\R$, and $t\geq 0$,
 \begin{equation}\label{h-13}
 \begin{split}
\|\Phi_{\chi}(t+t_n+t_0;t_n+t_0)\tilde{U}(\cdot,t_n+t_0;t_0,u_0)\|_{\infty}\leq & e^{-t(b_{\inf}-\chi\mu)u^+_{\chi \inf}}\|\tilde{U}(\cdot,t_n+t_0;t_0,u_0)\|_{\infty}\cr
\leq & \underbrace{2\Big( \frac{\chi\mu C_{1}(\chi)u^+_{\sup}}{(b_{\inf}-\chi\mu)u^+_{\chi \inf}}\Big)^n}_{:=K_{1,n}}e^{-t(b_{\inf}-\chi\mu)u^+_{\chi \inf}}.
\end{split}
 \end{equation}
 Next, for every $0\leq s\leq t$, $n\geq 1$, and $t_0\in\R$, we have
 \begin{equation}\label{h-14}
 \begin{split}
& \|\Phi_{\chi}(t+t_n+t_0,s+t_n+t_0) (( U \Big(\lambda (v-v^+_{\chi})+\nabla \ln(u^+_{\chi})\cdot \nabla(v-v^+_{\chi}))(s+t_n+t_0) \Big)\|_{\infty}\cr
\leq &  e^{-(t-s)(b_{\inf}-\chi\mu)u^+_{\chi \inf}}\|\Big(U \Big(\lambda (v-v^+_{\chi})+\nabla \ln(u^+_{\chi})\cdot \nabla(v-v^+_{\chi})\Big)\Big)(s+t_n+t_0)  \|_{\infty}\cr
\leq & \Big(1+2\Big( \frac{\chi\mu C_{1}(\chi)u^+_{\sup}}{(b_{\inf}-\chi\mu)u^+_{\chi \inf}}\Big)^n\Big)\frac{\| \Big(\lambda (v-v^+_{\chi})+\nabla \ln(u^+_{\chi})\cdot \nabla(v-v^+_{\chi})\Big)(s+t_n+t_0)\|_{\infty}}{e^{(t-s)(b_{\inf}-\chi\mu)u^+_{\chi \inf}}}\cr
 \leq & \Big(1+2\Big( \frac{\chi\mu C_{1}(\chi)u^+_{\sup}}{(b_{\inf}-\chi\mu)u^+_{\chi \inf}}\Big)^n\Big)\Big(1 + \frac{ C_0(\chi)\sqrt{N}}{u^+_{\chi \inf}\sqrt{\lambda}}\Big)\mu\|(u-u^+_{\chi})(s+t_n+t_0)\|_{\infty}e^{-(t-s)(b_{\inf}-\chi\mu)u^+_{\chi \inf}}\cr
 \leq & \underbrace{\Big(1+2\Big( \frac{\chi\mu C_{1}(\chi)u^+_{\sup}}{(b_{\inf}-\chi\mu)u^+_{\chi \inf}}\Big)^n\Big)\Big(1 + \frac{ C_0(\chi)\sqrt{N}}{u^+_{\chi \inf}\sqrt{\lambda}}\Big)\mu u^+_{\chi \sup}}_{:=K_{2,n}}\|\tilde{U}(\cdot,s+t_n+t_0)\|e^{-(b_{\inf}-\chi\mu)u^+_{\chi \inf}(t-s)}
 \end{split}
 \end{equation}
 We also have
 \begin{equation}\label{h-15}
 \begin{split}
& \|\Phi_{\chi}(t+t_n+t_0,s+t_n+t_0)(b-\chi\mu)u^+_{\chi}\tilde{U}^2(\cdot,s+t_n+t_0)\|_{\infty}\cr
\leq &  2 \underbrace{(b_{\sup}-\chi\mu)u^+_{\chi \sup}\Big( \frac{\chi\mu C_{1}(\chi)u^+_{\sup}}{(b_{\inf}-\chi\mu)u^+_{\chi \inf}}\Big)^n}_{:=K_{3,n}}\|\tilde{U}(\cdot,s+t_n+t_0)\|_{\infty}e^{-(t-s)(b_{\inf}-\chi\mu)u^+_{\inf}}.
 \end{split}
 \end{equation}
 Thus, it follows from \eqref{h-12}, \eqref{h-13}, \eqref{h-14}, and \eqref{h-15} that
 \begin{equation*}
 \begin{split}
& \|\tilde U(\cdot,t+t_n+t_0;t_0,u_0)\|_{\infty}\cr
\leq& K_{1,n}e^{-(b_{\inf}-\chi\mu)t}+(\chi K_{2,n}+K_{3,n})\int_0^{t}e^{-(t-s)(b_{\inf}-\chi\mu)u^+_{\chi\sup}}\|\tilde U(\cdot,s+t_n+t_0;t_0,u_0)\|_{\infty}ds,
 \end{split}
 \end{equation*}
 which is equivalent to
 \begin{equation*}\begin{split}
& e^{t(b_{\inf}-\chi\mu)u^+_{\chi\sup}}\|\tilde U(\cdot,t+t_n+t_0;t_0,u_0)\|_{\infty}\cr
\leq& K_{1,n}+(\chi K_{2,n}+K_{3,n})\int_0^{t}e^{s(b_{\inf}-\chi\mu)u^+_{\chi\sup}}\|\tilde U(\cdot,s+t_n+t_0;t_0,u_0)\|_{\infty}ds, \forall\ t\geq 0.
 \end{split}
 \end{equation*}
 Therefore, by Grownwall's inequality, we obtain that
 \begin{equation*}
 e^{t(b_{\inf}-\chi\mu)u^+_{\chi\sup}}\|\tilde U(\cdot,t+t_n+t_0;t_0,u_0)\|_{\infty}\leq K_{1,n}e^{(\chi K_{2,n}+K_{3,n})t}, \quad \forall\ t\geq 0.
 \end{equation*}
 That is \begin{equation}\label{h-16}
|\tilde U(\cdot,t+t_n+t_0;t_0,u_0)\|_{\infty}\leq K_{1,n}e^{-\big((b_{\inf}-\chi\mu)u^+_{\chi\sup}-\chi K_{2,n}-K_{3,n}\big)t}, \quad \forall\ t\geq 0.
 \end{equation}
 By $(\tilde{H})$, we have
 $$
\lim_{n\to\infty}K_{1,n}=\lim_{n\to\infty}K_{3,n}=0 \quad \text{and}\quad \lim_{n\to\infty}K_{2,n}=\Big(1+\frac{ C_0(\chi)\sqrt{N}}{\sqrt{\lambda}u^+_{\chi\inf}}\Big)\mu u^+_{\chi \sup}=\mu C_{1}(\chi)u^+_{\chi\sup}.
 $$
Since $(\tilde{H})$ holds, then there is $n_0 \gg 1$ such that
$$
\alpha_{\chi}:=\sup_{n\geq n_0}((b_{\inf}-\chi\mu)u^+_{\chi\sup}-\chi K_{2,n}-K_{3,n})>0
$$
This combined with \eqref{h-16} yield that
$$\|u(\cdot,t+t_{n_0}+t_0;t_0,u_0)-u^+_{\chi}(t+t_{n_0}+t_0)\|_{\infty}\leq u^+_{\chi\sup}K_{1,n_{0}}e^{-t\alpha_{\chi}}\quad \forall \, t\ge 0, $$
which implies that  $(u^+_{\chi}(x,t),v^+_{\chi}(x,t))$ is exponentially stable.

\medskip

\noindent {\bf Uniqueness of $(u^+_{\chi}(x,t),v^+_{\chi}(x,t))$: } Let $(\tilde{u}^+_{\chi}(x,t),\tilde{v}^+_{\chi}(x,t))$ be a positive entire solution of \eqref{P}. Then, since $0<\tilde{u}^+_{\chi\inf}\leq \tilde{u}^+_{\chi\sup}<\infty $,  it follows from the exponential stability of $(u^+_{\chi}(x,t),v^+_{\chi}(x,t))$ that there is a positive constant $K$ depending only on $\tilde{u}^+_{\chi\inf}, \tilde{u}^+_{\chi\sup}, u^+_{\chi\inf}$, and $ u^+_{\chi\sup} $, such that
\begin{equation*}
\begin{split}
&\|\tilde{u}^+_{\chi}(\cdot,t)-u^+_{\chi}(\cdot,t)\|_{\infty}\cr
 = & \|\tilde{u}^+_{\chi}(\cdot,n+(t-n);t-n,u^+_{\chi}(\cdot,t-n))-u^+_{\chi}(\cdot,n+(t-n);t-n,u^+_{\chi}(\cdot,t-n))\|_{\infty}\cr
 \leq & K e^{-n \alpha_{\chi}}, \quad \forall\ n\geq 1.
\end{split}
\end{equation*}
 Letting $n\to\infty$ in the last inequality yields that $u^+_{\chi}(x,t)\equiv \tilde{u}^+_{\chi}(x,t)$. This completes the proof of Theorem \ref{Stability of positive space homogeneous steady state solution}.
\end{proof}

\medskip

Next, we present the proof of Theorem \ref{perturbation effect}.

\medskip

\begin{proof}[Proof of Theorem \ref{perturbation effect}]
The proof of this result follows from the arguments used in the proof of Theorem \ref{New-asymp-lem 1}. So, lengthy detail will be avoided.

(i) Let $U(x,t)=\frac{u^+_{\chi}(x,t)}{u^+_0(x,t)}$ for $x\in\R^N$ and $t\in\R.$
Then, it follows from similar arguments leading to \eqref{f-1}, that $U(x,t)$ satisfies
\begin{equation}\label{h-17}
U_{t}=\Delta U +\nabla U\cdot \nabla (2\ln(u_0^+)-\chi v^+_{\chi})-\chi U\Big(\Delta v^+_{\chi} +\nabla \ln(u_0)\cdot \nabla v^+_{\chi}\Big) +b(x,t)u^+_{0}(x,t)U(1-U).
\end{equation}
Observe that
\begin{equation*}
\begin{split}
\|\Big(\Delta v^+_{\chi} +\nabla \ln(u_0)\cdot \nabla v^+_{\chi}\Big)(\cdot,t)\|_\infty\leq & \|\Delta v(\cdot,t)\|_{\infty}+\|\Big(\nabla \ln(u_0)\cdot \nabla v^+_{\chi}\Big)(\cdot,t)\|_{\infty}\cr
\leq & 2\mu \|u^+_{\chi}(\cdot,t)\|_{\infty}+\|\nabla\ln(u_0^+(\cdot,t))\|_{\infty}\|\nabla v^+_{\chi}(\cdot,t)\|_{\infty}\cr
\leq &\mu\Big( 2 +\frac{\sqrt{N}}{\sqrt{\lambda}}\sup_{t\in\R}\|\nabla\ln(u^+_0(\cdot,t))\|_{\infty}\Big)\|u^+_{\chi}(\cdot,t)\|_{\infty}\cr
\leq & \underbrace{\Big( 2 +\frac{\sqrt{N}}{\sqrt{\lambda}}\sup_{t\in\R}\|\nabla\ln(u^+_0(\cdot,t))\|_{\infty}\Big)}_{:=K}\frac{\mu a_{\sup}}{b_{\inf}-\chi\mu}.
\end{split}
\end{equation*}
Thus, the last inequality combined with \eqref{h-17} yields that
\begin{equation}\label{h-18}
U_t\leq \Delta U +\nabla U\cdot \nabla (2\ln(u_0^+)-\chi v^+_{\chi}) +\frac{\chi\mu a_{\sup}K}{b_{\inf}-\chi\mu}U+b(x,t)u^+_{0}(x,t)U(1-U), \quad x\in\R^N, \ t\in\R,
\end{equation}
and
\begin{equation}\label{h-19}
U_t\geq \Delta U +\nabla U\cdot \nabla (2\ln(u_0^+)-\chi v^+_{\chi}) -\frac{\chi\mu a_{\sup}K}{b_{\inf}-\chi\mu}U+b(x,t)u^+_{0}(x,t)U(1-U)\quad \ x\in\R^N, \ t\in\R.
\end{equation}
As in the proof of \eqref{f-9}, letting  $\overline{U}(t)$ denote the solution of the ODE
$$
\begin{cases}
\frac{d\overline{U}}{d t}= \frac{\chi\mu a_{\sup}K}{b_{\inf}-\chi\mu}\overline{U}+b_{\inf}u^+_{0\inf}\overline{U}(1-\overline{U}), t>0\cr
\overline{U}(0)=\max\{\sup_{x,t}U(x,t), \frac{b_{\inf}u^+_{0\inf}+\frac{\chi\mu a_{\sup}K}{b_{\inf}-\chi\mu}}{b_{\inf}u^+_{0\inf}}\},
\end{cases}
$$
it follows from \eqref{h-18} and comparison principle for parabolic equations that
\begin{equation}\label{h-20}
U(x,t+t_0)\leq \overline{U}(t),\quad \forall \ t\geq 0, \forall\ t_0\in\R, \ \forall\ x\in\R^N.
\end{equation}
Observe  that
$$
\lim_{t\to\infty}\overline{U}(t)= \frac{b_{\inf}u^+_{0\inf}+\frac{\chi\mu a_{\sup}K}{b_{\inf}-\chi\mu}}{b_{\inf}u^+_{0\inf}}.
$$
Thus, it follows from \eqref{h-20} that
\begin{equation}\label{h-21}
U(x,t)\leq \lim_{t_0\to-\infty}\overline{U}(t-t_0)=\frac{b_{\inf}u^+_{0\inf}+\frac{\chi\mu a_{\sup}K}{b_{\inf}-\chi\mu}}{b_{\inf}u^+_{0\inf}}, \forall \ x\in \R^N, \ \forall\ t\in\R.
\end{equation}
On the other hand, letting $\underline{U}(t)$ be the solution  of the ODE
$$
\begin{cases}
\frac{d\underline{U}}{d t}= -\frac{\chi\mu a_{\sup}K}{b_{\inf}-\chi\mu}\underline{U}+b_{\inf}u^+_{0\inf}\underline{U}(1-\underline{U}), t>0\cr
\underline{U}(0)=\min\{\inf_{x,t}U(x,t), 1\},
\end{cases}
$$
it follows from \eqref{h-19} and comparison principle for parabolic equations that
\begin{equation}\label{h-20'}
U(x,t+t_0)\geq \underline{U}(t),\quad \forall \ t\geq 0, \forall\ t_0\in\R, \ \forall\ x\in\R^N.
\end{equation}
Observe also that
$$
\lim_{t\to\infty}\underline{U}(t)= \frac{(b_{\inf}u^+_{0\inf}-\frac{\chi\mu a_{\sup}K}{b_{\inf}-\chi\mu})_{+}}{b_{\inf}u^+_{0\inf}}.
$$
Hence inequality \eqref{h-20'} yields that
\begin{equation}\label{h-22}
U(x,t)\geq\lim_{t_0\to-\infty}\underline{U}(t-t_0)= \frac{(b_{\inf}u^+_{0\inf}-\frac{\chi\mu a_{\sup}K}{b_{\inf}-\chi\mu})_{+}}{b_{\inf}u^+_{0\inf}}\geq 1- \frac{\chi\mu a_{\sup}K}{(b_{\inf}-\chi\mu)b_{\inf}u^+_{0\inf}}, \quad \forall\ x\in\R^N, \ t\in\R.
\end{equation}
Therefore, it follows from \eqref{h-21} and \eqref{h-22} that
$$
\|U(x,t)-1\|_{\infty}\leq \frac{\chi\mu a_{\sup}K}{(b_{\inf}-\chi\mu)b_{\inf}u^+_{0\inf}}, \quad \forall\ t\in\R.
$$
This implies that
$$
\|u^+_{\chi}(x,t)-u^+_0(x,t)\|_{\infty}\leq \frac{\chi\mu a_{\sup}u^+_{0\sup}K}{(b_{\inf}-\chi\mu)b_{\inf}u^+_{0\inf}}, \quad \forall\ t\in\R, \ x\in\R^N.
$$
Thus inequality \eqref{small chemotatic pert eq1}  follows.

(ii) Next, we prove inequality \eqref{small chemotatic pert eq2}. For, let $u_0\in C^b_{\rm unif}(\R^N)$ with $u_{0\inf}>0$. Observe that  $\|u_{\chi}(\cdot,t+t_0;t_0,u_0)\|_{\infty}\leq \max\{u_{0\sup},\frac{a_{\sup}}{b_{\inf}-\chi\mu}\}$ for every $t\geq 0$ and $t_0\in\R$ and $\|\nabla v_{\chi}(\cdot,t+t_0;t_0,u_0)|_{\infty}\leq \frac{\mu\sqrt{N}}{\sqrt{\lambda}}\|u_{\chi}(\cdot,t+t_0;t_0,u_0)\|_{\infty}$.
Hence, for every $t\geq 0$, $\ t_0\in\R,$ and $\ 0\leq \chi<b_{\inf}$, it follows from \eqref{eq-f-0-02} that
\begin{equation*}
\begin{split}
&\|u_{\chi}(\cdot,t+t_0;t_0,u_0)-u_{0}(\cdot,t+t_0;t_0,u_0)\|_{\infty}\cr
\leq & \chi \int_0^t\|T(t-s)\nabla\cdot((u_{\chi}\nabla v_{\chi})(\cdot,s+t_0;t_0,u_0))\|_{\infty}ds \cr
& + \int_0^t\|T(t-s)\Big((a+1+b(u_{\chi}-u_0))(u_{\chi}-u_0)) \Big)(\cdot,s+t_0;t_0,u_0)\|_{\infty}ds\cr
\leq & \frac{N\chi}{\sqrt{\pi}}\int_0^t\frac{e^{-(t-s)}}{\sqrt{t-s}}\|((u_{\chi}\nabla v_{\chi})(\cdot,s+t_0;t_0,u_0))\|_{\infty}ds\cr
&+\int_0^te^{-(t-s)}\|\Big((a+1+b(u_{\chi}-u_0))(u_{\chi}-u_0)) \Big)(\cdot,s+t_0;t_0,u_0)\|_{\infty}\cr
\leq &\frac{N\chi\sqrt{N}}{\sqrt{\pi\lambda}}\max\{u_{0\sup}^2,\Big(\frac{a_{\sup}}{b_{\inf}-\chi\mu}\Big)^2\}\int_0^t\frac{e^{-(t-s)}}{\sqrt{t-s}}ds\cr
&+\Big(a_{\sup}+1+b_{\sup}\max\{u_{0\sup},\frac{a_{\sup}}{b_{\inf}-\chi\mu}\}\Big)\int_0^t\|(u_{\chi}-u_0)(\cdot,s+t_0;t_0,u_0)\|_{\infty}\cr
\leq & \chi C_{1}\sqrt{t} +C_{2}\int_0^t\|(u_{\chi}-u_0)(\cdot,s+t_0;t_0,u_0)\|_{\infty},
\end{split}
\end{equation*}
where $C_1:=2\frac{N\sqrt{N}}{\sqrt{\pi\lambda}}\max\{u_{0\sup}^2,\Big(\frac{a_{\sup}}{b_{\inf}-\chi\mu}\Big)^2\}$ and $C_2:=\Big(a_{\sup}+1+b_{\sup}\max\{u_{0\sup},\frac{a_{\sup}}{b_{\inf}-\chi\mu}\}\Big)$.
Thus, by Grownwall inequality, we have that
\begin{equation}\label{h-23}
\|u_{\chi}(\cdot,t+t_0;t_0,u_0)-u_{0}(\cdot,t+t_0;t_0,u_0)\|_{\infty}\leq \chi C_{1}\sqrt{t}e^{C_2 t}, \quad \forall t\geq 0, \forall \ t_0\in\R, \ \forall\ 0<\chi<b_{inf}.
\end{equation}
In particular, taking $t=1$, we obtain for every $t_0\in\R$ and $0\leq \chi<b_{\inf}$,
\begin{equation}\label{h-24}
\|\frac{u_{\chi}(\cdot,1+t_0;t_0,u_0)}{u_0(\cdot,1+t_0;t_0,u_0)}-1 \|_{\infty}\leq\frac{1}{C_3}\|u_{\chi}(\cdot,1+t_0;t_0,u_0)- u_0(\cdot,1+t_0;t_0,u_0) \|_{\infty}\leq \chi\frac{C_1 e^{C_2}}{C_3},
\end{equation}
where $C_3:=\inf_{t\geq t_0, x\in\R^N, t_0\in\R}u_{0}(x,t;t_0,u_0)>0$.

Taking $\chi=0$ in \eqref{eq-f-0-02} and differentiating  both sides with respect to space variable yield that
\begin{equation*}
\begin{split}
C_4:=&\sup_{t\geq0, t_0\in\R}\Big(2 +\frac{N}{\sqrt{\lambda}}\|\nabla \ln(u_0(\cdot,t+1+t_0;t_0,u_0)) \|_{\infty} \Big)\cr
\leq&\Big(2+\frac{N\sqrt{N}}{\sqrt{\lambda}} (e^{-1}+1+a_{\sup}+b_{\sup}\max\{u_{0\sup},\frac{a_{\sup}}{b_{\inf}}\})\max\{u_{0\sup},\frac{a_{\sup}}{b_{\inf}}\}\Big) <\infty.
\end{split}
\end{equation*}
By setting $U(x,t)=\frac{u_{\chi}(x,t+1+t_0;t_0,u_0)}{u_0(x,t+1+t_0;t_0,u_0)}$ for $x\in\R^N$ and $t\geq 0$, similar arguments leading to \eqref{h-20} and \eqref{h-20'} yield that
\begin{equation}\label{h-25}
\underline{U}(t)\leq \frac{u_{\chi}(x,t+1+t_0;t_0,u_0)}{u_0(x,t+1+t_0;t_0,u_0)}\leq \overline{U}(t),\quad \forall \ x\in\R^N, \ \forall\ t\geq 0, \ t_0\in\R,
\end{equation}
where $\overline{U}(t)$ and $\underline{U}(t)$ are solutions of the ODE's
$$
\begin{cases}
\frac{d\underline{U}}{d t}= -\frac{\chi\mu a_{\sup}C_4}{b_{\inf}-\chi\mu}\underline{U}+b_{\inf}C_3\underline{U}(1-\underline{U}), t>0\cr
\underline{U}(0)=\min\Big\{\Big(1-\chi\frac{C_1e^{C_2}}{C_3}\Big)_{+}, \Big(\frac{b_{\inf}C_{3}-\frac{\chi\mu a_{\sup}C_4}{b_{\inf}-\chi\mu}}{b_{\inf}C_{3}}\Big)_+\Big\}
\end{cases}
$$
and
$$
\begin{cases}
\frac{d\overline{U}}{d t}= \frac{\chi\mu a_{\sup}C_{4}}{b_{\inf}-\chi\mu}\overline{U}+b_{\inf}C_3\overline{U}(1-\overline{U}), t>0\cr
\overline{U}(0)=\max\Big\{1+\chi\frac{C_1e^{C_2}}{C_3}, \frac{b_{\inf}C_{3}+\frac{\chi\mu a_{\sup}C_4}{b_{\inf}-\chi\mu}}{b_{\inf}C_{3}}\Big\},
\end{cases}
$$
respectively. Note that we have used \eqref{h-24} and comparison principle for parabolic equations to obtain \eqref{h-25}. It is easy to see that $\underline{U}(0)\leq \underline{U}(t)$ and $\overline{U}(t)\leq \overline{U}(0)$ for every $t\geq 0$. Therefore, it follows from inequality \eqref{h-25} that
$$
\underline{U}(0)\leq \frac{u_{\chi}(x,t+1+t_0;t_0,u_0)}{u_0(x,t+1+t_0;t_0,u_0)} \leq \overline{U}(0),\quad \forall \ x\in\R^N, \ \forall\ t\geq 0, \ t_0\in\R,
$$
which implies that
\begin{equation*}
\begin{split}
&\|u_{\chi}(\cdot,t+1+t_0;t_0,u_0)-u_0(\cdot,t+1+t_0;t_0,u_0)\|_{\infty}\cr
\leq & \chi\max\Big\{u_{0\sup},\frac{a_{\sup}}{b_{\inf}}\Big\}\max\Big\{\frac{C_1e^{C_2}}{C_3}, \frac{\mu a_{\sup}C_4}{(b_{\inf}-\chi\mu)b_{\inf}C_{3}} \Big\}, \ \quad \ \ \forall\ t\geq 0, \ t_0\in\R.
\end{split}
\end{equation*}
This combined with \eqref{h-23} yields \eqref{small chemotatic pert eq2}.
\end{proof}

\end{document}